\newcommand{\R}{{\mathbb R}}
\newcommand{\be}{\begin{eqnarray}}
\newcommand{\ben}{\begin{eqnarray*}}
\newcommand{\en}{\end{eqnarray}}
\newcommand{\enn}{\end{eqnarray*}}
\newcommand{\ba}{\backslash}
\newcommand{\pa}{\partial}
\newcommand{\Om}{\Omega}
\newcommand{\wi}{\widetilde}
\newcommand{\hx}{\hat{x}}
\newtheorem{theorem}{Theorem}[section]
\newtheorem{lemma}[theorem]{Lemma}
\newtheorem{assumption}[theorem]{Assumption}
\begin{document}
\title{\bf The high resolution sampling methods for acoustic sources from multi-frequency far field patterns at sparse observation directions}
\author{
Xiaodong Liu\thanks{Academy of Mathematics and Systems Science,
Chinese Academy of Sciences, Beijing 100190, China. Email: xdliu@amt.ac.cn}
,\and
Qingxiang Shi\thanks{Corresponding author. Yau Mathematical Sciences Center, Tsinghua University,
Beijing 100084, China. Email: sqxsqx142857@tsinghua.edu.cn}
}
\date{}
\maketitle

\begin{abstract}
This work is dedicated to novel uniqueness results and high resolution sampling methods for source support from multi-frequency sparse far field patterns.
With a single pair of observation directions $\pm\hat{x}$, we prove that the lines $\{z\in\mathbb R^2|\, \hat{x}\cdot z = \hat{x}\cdot y, \,y\in A_{\hat{x}}\}$ can be determined by multi-frequency far field patterns at the directions $\pm\hat{x}$, where $A_{\hat{x}}$ denotes a set containing the corners of the boundary and points whose normal vector to the boundary is parallel to $\hat{x}$. Furthermore, if the source support is composed of polygons and annuluses, then we prove that the support can be determined by multi-frequency far field patterns at sparse directions.
Precisely, the lowest number of the observation directions is given in terms of the number of the corners and the annuluses.
Inspired by the uniqueness arguments, we introduce two novel indicators to determine the source support. Numerical examples in two dimensions are presented to show the validity and robustness of the two indicators for reconstructing the boundaries of the source support with a high resolution. The second indicator also shows its powerful ability to determine the unknown source function.

\vspace{.2in}
    {\bf Keywords:} inverse source problem, multi-frequency sparse data, uniqueness, direct sampling method

\vspace{.2in} {\bf AMS subject classifications:}
35R30, 78A46, 45Q05
\end{abstract}

\section{Introduction}
\setcounter{equation}{0}

 %This work is concerned with inverse source problems (ISPs) in the frequency domain with sparse far %field patterns data. 
 Inverse source problems (ISPs) play an important role in many scientific and industrial fields  such as medical imaging \cite{apply-at-2007,apply-at-2009}, antenna technology \cite{apply-at-1991,apply-at-1982}, and seismic monitoring \cite{apply-at-1986}. Due to the existence of non-radiating sources \cite{non-uni}, a general source cannot be uniquely determined from the far field patterns at a single frequency. 
 Therefore, in order to obtain a unique solution to the problem, one has to impose additional constraints on the source. A possible choice is to find the source with a minimum energy norm\cite{min-eng-sol-2007, min-eng-sol-2000, min-eng-sol-1982}. However, the minimum energy solution may not be the true source function. Moreover, the ISPs at fixed frequency are very difficulty to solve due to its inherited instability.  Therefore, there are many works for the ISPs with multi-frequency measurements. So far, there have been many results about uniqueness and stability on this aspect \cite{full-thm-2015,full-thm-2016,full-thm-2017,full-thm-2020} . Meanwhile, many numerical methods for ISPs have been proposed, for example, the Recursive algorithm \cite{full-num-recu}, the Fourier method \cite{full-num-fouri}, and the Learning method \cite{full-num-learn}. 
 
In many cases of practical interest, the measurements are only available at finitely many sensors. It is shown in \cite{JiLiu-sisc2020, JiLiu-siap2021}  that the point sources can be identified from the multi-frequency measurements at sparse sensors. Precisely, the lower bound of the needed sensors for uniquely determine the numbers, locations, and scattering strengths of the point sources is derived, a direct sampling method is proposed for locating all the locations and the formula for the scattering strengths is introduced. Moreover, these results have been extended to the case when the unknown point sources are distributed in a two-layered medium \cite{spar-point-2023} and the case of coexistence of point sources and extended sources \cite{spar-2023LiLiu, spar-2023}. However, the global uniqueness of a general extended source from the measurements at sparse sensors does not hold \cite{spar-2020}.  Consequently, what we can expect from the sparse measurements is partial information about the extended source, say e.g., the support of the source function. 

Mathematically, let $f: \mathbb R^2\to\mathbb C$ be a generic source function with compact support $\Om:=supp f$ located in an isotropic homogeneous medium. 
The source $f$ gives rise to a scattered field $u$ which satisfies 
\be
    \label{ela}
    \Delta u+k^2u=-f\quad{\rm in} \ \mathbb R^2,\\
    \label{Src}
    \lim\limits_{r=|x|\to\infty}r^{\frac{1}{2}}\left(\frac{\partial u}{\partial r} -i k u\right)=0,
\en
where $k>0$ is the wavenumber and the Sommerfeld radiation condition \eqref{Src} holds
uniformly with respect to $\hx=x/|x|\in  \mathbb S^{1}:=\{x\in\R^2:|x|=1\}$.
Every solution $u$ of \eqref{ela}-\eqref{Src} has the asymptotic behavior of an outgoing spherical wave
\be\label{0asyrep}
u(x,k)
=\frac{e^{i\frac{\pi}{4}}}{\sqrt{8k\pi}}
\frac{e^{ikr}}{r^{\frac{1}{2}}}\left\{u^{\infty}(\hat{x},k)+\mathcal{O}\left(\frac{1}{r}\right)\right\}\quad\mbox{as }\,r:=|x|\rightarrow\infty
\en
uniformly in all directions $\hx\in \mathbb S^{1}$ where the function $u^{\infty}$ defined on the unit circle $\mathbb S^1$ is known as the far field pattern of $u$. 
The radiating solution $u$ has the form
\be\label{S-rep}
    u(x,k)=\int_{\mathbb R^2}\Phi_k(x,y)f(y)dy, \quad  x\in\mathbb R^2,
\en
where
\ben
\Phi_k(x,y):=\frac{i}{4}H^{(1)}_0(k|x-y|),\quad x\neq y,
\enn
is the fundamental solution to the Helmholtz equation in $\R^2$ with $H^{(1)}_0$ denoting the Hankel function of the first kind and order zero.
Furthermore, the corresponding far field pattern $u^{\infty}$ takes the form
\be\label{u_inf_1}   
u^\infty(\hat{x},k)=\int_{\mathbb R^2}e^{-ik\hat{x}\cdot y}f(y)dy,\ \quad \hat{x}\in\mathbb S^{1}.
\en
In this paper, we are interested in the following inverse problem:\\
\textbf{\large{(IP)}:} Determine the support $\Omega$ of the source from the multi-frequency sparse far field patterns
\begin{align*}
    \{u^{\infty}(\hx,k)|\  \hx\in\Theta_L,\, 0< k_{\min}<k<k_{\max}\},
\end{align*}
 where 
 \ben
 \Theta_L:=\{\hat{x}_1,\hat{x}_2,\cdots,\hat{x}_L\}\subset\mathbb S^1
 \enn
 denotes the set of sparse observation directions. Here and throughout the paper we assume that any two elements in $\Theta_L$ are not collinear.

Under certain conditions, Sylvester and Kelly \cite{spar-2005} show that the strip 
\ben
S_{\Om}(\hx):=  \{ y\in \mathbb \R^{2}\; | \; \inf_{z\in \Om}z\cdot \hx \leq y\cdot \hx \leq \sup_{z\in \Om}z\cdot \hx\}
\enn
can be uniquely determined by the multi-frequency far field patterns at a fixed observation direction $\hx\in\Theta_L$.
Such a strip is the smallest strip with normals in the directions $\pm \hx$ that contains $\Om$.  Furthermore, one may obtain a union of convex polygons with normals in the observation directions $\hx\in\Theta_L$ containing the source support $\Om$. 
Numerically, since the pioneering work of Colton and Kirsch on the linear sampling method \cite{ColtonKirsch-lsm},  there has been an extensive study on the sampling type methods for support reconstructions in the last thirty years due to their many advantages, e.g., avoiding computing the direct problems, very simple and fast to implement, and making no use of the topological properties of $\Om$.
%The key of the sampling type methods is to design an indicator functional which can be used to determine whether or not a sampling point is in the support of the unknown objects. 
The first sampling type method for inverse source problems is a factorization method \cite{full-num-facto}, which has been proved to correctly recover the convex polygons containing the source support. The numerical examples show that the factorization method can be used to obtain a good approximation of the location and size of the source support $\Om$. More recently, some direct sampling methods have been proposed in \cite{spar-2020,JiLiu-siap2021,LiuMeng-csiam2023} to give a shape reconstruction of the source support $\Om$. In particular, numerical examples show that the concave part of $\Om$ can also be surprisingly recovered.  The main feature of the direct sampling methods is that only inner product of the
measurements with some suitably chosen functions is involved in the indicator functional and thus
seem very robust to noises. 
%However, compared to the factorization method \cite{full-num-facto}, the theory basis of the direct sampling methods \cite{spar-2020,JiLiu-siap2021,LiuMeng-csiam2023} are less developed.

The first contribution of this paper is some uniqueness results of the source support $\Om$ from the  multi-frequency far field patterns at sparse observation directions.  Precisely,  if it is assumed that $\Om$ is composed of polygons and annuluses, we show the multi-frequency sparse far field patterns provide enough information to completely determine the  source support $\Om$. The low bound of needed number of the observation directions is given in terms of the number of annuluses and corners.
To the author's knowledge, this is the first uniqueness result for the source support $\Om$ from multi-frequency measurements at sparse sensors. 
Moreover, if we further assume that the source function $f$ is piecewise constant, then the source function $f$ can also be uniquely determined. 
An important ingredient in the uniqueness proof is an indicator for catching the corners and the circles, which can also be easily used for numerical computations. 
%At each observation direction $\hx$, we denote by $Y_{\hx}\subset \Om$ a set containing %all the corners and the boundary points with exterior normal direction parallel to $\hx$.  
%An important ingredient in the proof is an indicator that is used to catch the lines, with %normal $\hx$, passing through $Y_{\hx}$. 

The other contribution of this paper is two novel direct sampling methods for reconstructing $\Om$ by using multi-frequency far field patterns at sparse observation directions. 
% The novel indicator is motivated by the uniqueness arguments.The proposed direct sampling methods inherit many advantages of the sampling methods, e.g., very simple and fast to implement, highly robust to noise and making no use of the topological properties of $\Om$. 
The first indicator function is motivated directly from the uniqueness arguments. The theoretical basis why it can be used to capture the polygons and annuluses has been established in the unique proof. The second indicator function is a slight modification of the first one. Generally speaking, the sampling type methods are qualitative methods in the sense that one can only reconstruct the support but not the parameters of the unknown objects. However, we show that the second indicator function can be used not only  to reconstruct the source support $\Om$ but also to determine the source function $f$ if there are sufficiently many observation directions. Therefore, the modification in the second indicator functional is nontrivial. 
Compared to the direct sampling method \cite{spar-2020,JiLiu-siap2021,LiuMeng-csiam2023} and the factorization method \cite{full-num-facto},
the proposed two indicator functions are able to give a surprising high resolution reconstruction of $\pa\Om$, which we think is a big improvement. 
Numerous numerical examples show that the proposed direct sampling methods work very well for general sources, even if the source support $\Om$ has some concave or non simply connected structures. 
%we design a numerical scheme to retrieve $\{\hx\cdot y|y\in Y_{\hx}\}$ from multi-%frequency far field patterns at a fixed observation direction $\hx$. With the increase of %the observation directions, our method can well recover those polygons and annuluses, %especially corners. 

The rest of the paper is organized as follows. In section 2, we prove a key lemma to characterize some special boundary points from the  multi-frequency far field patterns at two opposite observation directions.
%introduce the  representations of the far field patterns. Meanwhile, we provide the relationship between the far field patterns and the support of the source. 
Based on this lemma, we show in Section 3 the uniqueness results if the source support $\Om$ is composed of polygons and annuluses. 
%The uniqueness results of the case that the support of source is composed of some polygons and annuluses are established in section 3. 
Based on the uniqueness arguments, in section 4, we introduce two indicator functions for reconstructing the support of the source. Extended numerical examples are presented to show the validity and robustness of the proposed direct sampling methods.

%%%%%%%%%%%%%%%%%%%%%%%%%%%%%%%%%%%%%%%%%%%%%%%%%%%%%%%%%%%%%%%%%%%%%%%%%%%%%%%%%%%%%%%%%%%%%%%%%%%%%%%
\section{The characterization of special boundary points using the multi-frequency far field patterns at two opposite  directions}
\setcounter{equation}{0}
For clarity, we impose the following assumptions on the source function $f$ and its support $\Omega$:
\begin{assumption}\label{ass-source}
For the source function $f$ and its support $\Omega$, we assume that
\begin{itemize}
    \item $\Omega=\bigcup\limits_{m=1}^M \Omega_m$, where $\Omega_m$ are compact and $\Omega_m\cap\Omega_n=\emptyset
    ,\ \forall\ m\neq n $. The boundary $\partial\Omega_m$  is piecewise $C^1$ with finite many corners, and the interior of each $\Omega_m$ is a connected domain, $m=1,2,\cdots,M$. Note that each component $\Omega_m$ is not necessary simply connected.
    \item For any $\ \hx\in\mathbb S^1$, define a family of lines $l_{\hx,s}:=\left\{y\in\mathbb R^2\Big|\hx\cdot y=s \right\},\ s\in\mathbb R$. Then for any $ s\in\mathbb R$, there exist $\alpha_j(s)$, $\beta_j(s)$, and $J(s)<\infty$, satisfying
    \begin{align*}
        \beta_j(s)\leq\alpha_j(s),\quad&{\rm for}\ j=1,2,\cdots,J(s),\\
        \alpha_j(s)\leq\beta_{j+1}(s),\quad&{\rm for}\ j=1,2,\cdots,J(s)-1,
    \end{align*}
    and $l_{\hx,s}\cap\Omega=\bigcup\limits_{j=1}^{J(s)}\left\{s\hx+\tau\hx^{\perp}\in\mathbb R^2\Big|\beta_j(s)\leq\tau\leq\alpha_j(s)\right\}$.
%$t=\hat{x}\cdot y,\ \tau=\hat{x}^{\perp}\cdot y$. 
    Here, $\hat{x}^{\perp}$ is obtained by rotating $\hat{x}$ anticlockwise by $\pi/2$, the illustration of the definitions of $\alpha_j$ and $\beta_j$ can be seen in Figure \ref{Ill-abj}.
    \item $f|_{\Om_m}\in C^1(\Omega_m)$, $m=1,2,\cdots,M$, and $f(y)\neq 0,\ \forall\ y\in\partial\Omega$.
\end{itemize}
\end{assumption}
\begin{figure}[h]
    \centering
    \includegraphics[width=0.3\linewidth]{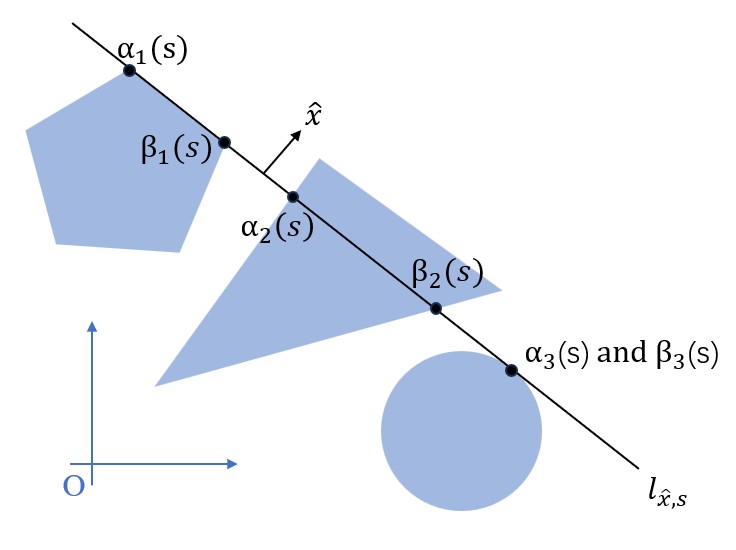}
    \caption{Illustration for $\alpha_j$ and $\beta_j$.}
    \label{Ill-abj}
\end{figure}

Note that the far field pattern $u^{\infty}(\cdot,k)$ depends analytically on the $k$, we have the far field patterns $u^{\infty}(\hx,k)$ for all $k>0$ from the measurement $u^{\infty}(\hat{x},k)$ with $k\in(k_{\min},k_{\max})$. Furthermore, we define 
\ben
u^\infty(\hx,0):=\lim_{k\rightarrow 0}u^\infty(\hx,k)\quad\mbox{and}\quad u^\infty(\hx,k):= u^{\infty}(-\hx,-k),\ \forall\ k<0,
\enn
and thus we can define the function $I_{\hat{x}}: \mathbb R\to\mathbb C$  from a knowledge of  multi-frequency far field patterns $\left\{u^{\infty}(\pm\hx,k)\Big|k\in\mathbb R\right\}$ ,
\begin{equation}   
\label{func_Is_C}
    \begin{aligned}
          I_{\hat{x}}(s):&=\int^{+\infty}_0\left[u^{\infty}(\hat{x},k)e^{iks}+ u^{\infty}(-\hat{x},k)e^{-iks}\right]dk\\
      &=\int^{+\infty}_{-\infty}u^{\infty}(\hat{x},k)e^{iks}dk\\
      &=\int^{+\infty}_{-\infty}e^{iks}\int^{+\infty}_{-\infty}\wi{f}(t)e^{-ikt}dtdk\\
      &=\wi{f}(s),\quad  s\in\mathbb R.  
    \end{aligned}
\end{equation}
Here, $\wi{f}(t):=\int^{+\infty}_{-\infty}f_{\hat{x}}(t,\tau)d\tau$ and $f_{\hat{x}}(t,\tau):=f(t\hx+\tau\hx^{\perp})$.
Note that if $f$ is a real valued function, then $\left\{u^{\infty}(\hx,k)\Big|k\in(k_{\min},k_{\max})\right\}$ is sufficient to obtain $I_{\hx}$, since $\overline{u^{\infty}(\hx,k)}=u^{\infty}(-\hx,k)$.
%\begin{align}\label{func_Is_R}
%     I_{\hat{x}}(s)=\int^{+\infty}_0\left[u^{\infty}(\hat{x},k)e^{iks}+ \overline{u^{\infty}(\hat{x},k)}e^{-iks}\right]dk.
%\end{align}

Denote by $X(\pa \Om)\subset \pa\Om$ the set of all corners on $\pa\Om$. Here and throughout the paper we say $y\in\pa \Om$ is a corner if the unit normal $\nu$ is not continuous at $y$. For a fixed direction $\hx\in \mathbb S^1$, we denote by $Y_{\hx}(\pa \Om)\subset \pa\Om$ the set of all the points on $\pa\Om$ whose normal direction is parallel to $\hx$. The following key lemma shows the potential of the function $I_{\hat{x}}$ given in \eqref{func_Is_C} to determine the points in $A_{\hx}:=X(\pa\Om)\cup Y_{\hx}(\pa \Om)$. 

Actually, $\Tilde{f}$ is the Radon transform $Rf(\hx,t)$ of source $f$, the properties  about how singularities in $f$ are connected to singularities in $Rf$  have been analyzed in \cite{Radon-qui,Radon-ram}. The results of Lemma \ref{Rela_betweenIandCorner} for more general sources have been studied in \cite{Radon-qui,Radon-ram}. We give a  simplified and elementary proof under the Assumption \ref{ass-source}, where some details will be applied in the proof of the subsequent uniqueness theorems.

\begin{lemma}\label{Rela_betweenIandCorner}
 Let Assumption \ref{ass-source} hold. For any fixed observation direction $\hx\in \mathbb S^1$, if the derivative of $I_{\hx}$ does not exist at $s_0\in \mathbb R$, then the line $l_{\hx,s_0}$  must pass through some point in $X(\pa\Om)\cup Y_{\hx}(\pa \Om)$.
%some corners of $\partial \Omega$ or some points whose normal vector to $\partial \Omega$ is perpendicular to $\hx^{\perp}$.
\end{lemma}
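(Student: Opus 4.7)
The plan is to prove the contrapositive: assuming the line $l_{\hx,s_0}$ meets neither a corner of $\pa\Om$ nor a point whose normal is parallel to $\hx$, I will show that $I_{\hx}$ is differentiable at $s_0$. By \eqref{func_Is_C}, $I_{\hx}(s)=\wi f(s)=\int_{-\infty}^{+\infty}f_{\hx}(s,\tau)d\tau$, and Assumption \ref{ass-source} together with the compact support of $f$ let me rewrite this as
\[
I_{\hx}(s)=\sum_{j=1}^{J(s)}\int_{\beta_j(s)}^{\alpha_j(s)}f_{\hx}(s,\tau)d\tau,
\]
reducing the task to differentiating a finite sum of integrals whose endpoints depend on $s$.

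The key step is a $C^1$ description of the endpoints $\alpha_j(s),\beta_j(s)$ and a proof that $J(s)$ is locally constant near $s_0$. Under the contrapositive hypothesis, each intersection point $p\in l_{\hx,s_0}\cap\pa\Om$ is a smooth boundary point whose normal is not parallel to $\hx$, so $\pa\Om$ crosses $l_{\hx,s_0}$ transversally at $p$. Applying the implicit function theorem to a local $C^1$ parametrization of $\pa\Om$ around $p$, one obtains a smooth branch $s\mapsto s\hx+\tau(s)\hx^{\perp}\in\pa\Om$ defined on an interval around $s_0$. Covering the finite set $l_{\hx,s_0}\cap\pa\Om$ by finitely many such disjoint charts and shrinking the $s$-neighborhood accordingly yields that each $\alpha_j$ and $\beta_j$ is $C^1$ on an interval around $s_0$. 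Any creation or destruction of an interval $[\beta_j(s),\alpha_j(s)]$ as $s$ crosses $s_0$ would, by continuity of $\pa\Om$, require a limiting tangency (a point of $Y_{\hx}(\pa\Om)$) or a corner (a point of $X(\pa\Om)$) on $l_{\hx,s_0}$, both excluded by hypothesis; hence $J(s)\equiv J(s_0)$ near $s_0$.

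The final step is a routine Leibniz differentiation: since $f|_{\Om_m}\in C^1(\Om_m)$ and $\alpha_j,\beta_j$ are $C^1$ near $s_0$, each summand of $I_{\hx}(s)$ is differentiable with
\[
\frac{d}{ds}\int_{\beta_j(s)}^{\alpha_j(s)}f_{\hx}(s,\tau)d\tau=f_{\hx}(s,\alpha_j(s))\alpha_j'(s)-f_{\hx}(s,\beta_j(s))\beta_j'(s)+\int_{\beta_j(s)}^{\alpha_j(s)}\pa_t f_{\hx}(s,\tau)d\tau,
\]
and summing over the finitely many indices $j\in\{1,\ldots,J(s_0)\}$ produces $I_{\hx}'(s_0)$, contradicting the hypothesis of the lemma.

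The step I expect to be the main obstacle is the geometric bookkeeping behind the "locally constant $J(s)$" claim: one must rule out two smooth branches colliding, or a new branch emerging, as $s$ varies near $s_0$. The idea is to exploit compactness of $\pa\Om$ together with the finiteness assumed in Assumption \ref{ass-source}: $X(\pa\Om)$ is finite and, by the transversality at every point of $l_{\hx,s_0}\cap\pa\Om$, the elements of $Y_{\hx}(\pa\Om)$ near this line are bounded away from it for $s$ sufficiently close to $s_0$. This separation allows me to choose the disjoint coordinate charts uniformly in $s$ in a small neighborhood. This bookkeeping is also exactly what will be reused in the sequel to quantify the jump in $I_{\hx}'$ across a singular $s_0$, which underlies the uniqueness theorems and the indicator functions introduced later.
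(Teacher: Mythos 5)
Your contrapositive argument is correct, but it is organized quite differently from the paper's proof, which runs in the forward direction: the authors differentiate the sliced integral to get the representation \eqref{Iprime}, observe that $I'_{\hx}$ can fail to exist at $s_0$ only if the jump $[\![I'_{\hx}]\!](s_0)$ is nonzero, and then exhaustively enumerate five cases (a boundary segment lying on the line, collapse or merging of intervals, a jump in the normal, an infinite endpoint derivative) showing that each forces $l_{\hx,s_0}$ to meet $X(\pa\Om)\cup Y_{\hx}(\pa\Om)$. You instead assume the line avoids these sets and establish outright that the endpoints $\alpha_j,\beta_j$ are $C^1$ and $J(s)$ is locally constant, so the Leibniz rule applies; the underlying computation (endpoint derivatives governed by the boundary normal via \eqref{ab-tan}) is the same, but you bypass the case analysis entirely. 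What your route buys is logical economy and a cleaner handling of the delicate point that jumps from different terms might cancel --- a subtlety the paper must live with, since its converse statements (Lemmas \ref{converse-ann} and \ref{converse-plo}) need extra hypotheses precisely to rule out such cancellation. What it gives up is the explicit jump formulas \eqref{discon-sec}, \eqref{discon-thr}, \eqref{discon-for}, which the paper extracts from its case analysis and reuses heavily in Theorems \ref{uni-pol-edge} and \ref{uni-mix} and in the design of the indicator functions; your version proves the lemma but would leave that quantitative information still to be derived. Two small points worth tightening in your write-up: you should note that under your hypothesis the set $l_{\hx,s_0}\cap\pa\Om$ is automatically finite (an accumulation point of boundary points on the line would be a smooth point with tangent along the line, hence in $Y_{\hx}(\pa\Om)$), and that the inequalities $\beta_j(s_0)<\alpha_j(s_0)<\beta_{j+1}(s_0)$ are strict (a degenerate or touching interval forces a tangency or corner on the line), since both facts are needed before the disjoint-chart and locally-constant-$J$ argument can be run.
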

\begin{proof}
    By the assumption of $\Om$ at the beginning of this subsection, we rewrite $I_{\hx}$ in the form
    \begin{align*}
        I_{\hx}(s)=\sum\limits_{j=1}^{J(s)}\int_{\beta_j(s)}^{\alpha_j(s)}f_{\hat{x}}(s,\tau)d\tau,\quad s\in\mathbb R.
    \end{align*}
    If $I_{\hx}^\prime(s)$ exist, straightforward calculations show that 
    \begin{align}\label{Iprime}
        I_{\hx}^{\prime}(s)=\sum\limits_{j=1}^{J(s)}\left[\int_{\beta_j(s)}^{\alpha_j(s)}\frac{\partial f_{\hat{x}}(t,\tau)}{\partial t}\Big|_{t=s}  d\tau+\alpha_j^{\prime}(s)f_{\hat{x}}(s,\alpha_j(s))- \beta_j^{\prime}(s)f_{\hat{x}}(s,\beta_j(s))\right], \quad s\in\mathbb R,
    \end{align}
where, for $j=1,2,\cdots,J(s)$,
  \begin{align}\label{ab-tan}
  \begin{split}
      \alpha_j^{\prime}(s)&=\tan\left(\langle\nu(s\hx+\alpha_j(s)\hx^{\perp}),\hx\rangle-\frac{\pi}{2}\right),\quad s\in\mathbb R,\\
      \beta_j^{\prime}(s)&=\tan\left(\langle\nu(s\hx+\beta_j(s)\hx^{\perp}),\hx\rangle-\frac{\pi}{2}\right),\quad s\in\mathbb R.
  \end{split}
  \end{align}
Here, $\nu$ denote the unit normal vector to the boundary $\partial \Omega$ directed into the exterior of $\Omega$ and
$\langle a, b\rangle\in(-\pi,\pi)$ is the angle rotated anticlockwisely from the unit vector $b$ to the unit vector $a$. Note that, for $j=1,2,\cdots,J(s)$,
\be \label{nu-tan}
\langle\nu(s\hx+\alpha_j(s)\hx^{\perp}),\hx\rangle \in(0,\pi)\quad\mbox{and}\quad
\langle\nu(s\hx+\beta_j(s)\hx^{\perp}),\hx\rangle \in(-\pi,0).
\en
From \eqref{Iprime}-\eqref{nu-tan} and assumptions on $f$ and $\Omega$, we deduce that the derivative $I^{\prime}_{\hx}$ has at most finitely many  discontinuous points. 

Denote by $R[f]$ and $L[f]$, respectively, the right limit and the left limit of the function $f$, we define $[\![f]\!]:=R[f]-L[f]$. 
If $I^{\prime}_{\hx}$ does not exist at $s_0$, we have $[\![I^{\prime}_{\hx}]\!](s_0)\neq 0$. By the representation \eqref{Iprime}, this happens only in one of the following five cases. 
%For those $s_0$, $I^{\prime}_{\hx}$ does not exist at $s_0$, it is nothing but one of the following five situations.
%We want to mention that in these situations, sometimes $L[\chi_{j_0}](s_0) $( or $R[\chi_{j_0}](s_0)$) are undefined for some $1\leq j_0\leq J(s_0)$, $\chi_{j_0}$ denotes $\alpha_{j_0}$ or $\beta_{j_0}$. Then we can  naturally define $L[\chi_{j_0}](s_0)\  {\rm and}\ L[\chi^{\prime}_{j_0}](s_0)$ ( or $R[\chi_{j_0}](s_0),\ R[\chi_{j_0}^{\prime}](s_0)$ as 0. 
\begin{itemize}
    \item \textbf{Case one:} $\left[\!\left[\int_{\beta_{j_0}(\cdot)}^{\alpha_{j_0}(\cdot)}\frac{\partial f_{\hat{x}}(t,\tau)}{\partial t}\Big|_{t=\cdot}  d\tau\right]\!\right](s_0)\neq0$ for some $1\leq j_0\leq J(s_0)$.
  \begin{figure}[htbp]
    \centering
    \includegraphics[width=.20\textwidth]{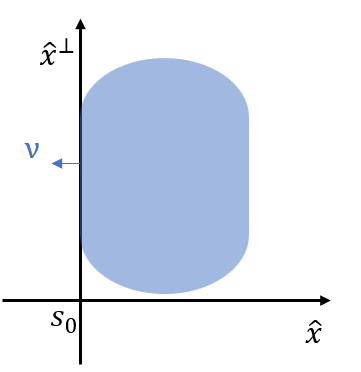}
    \caption{A simulation for the first case.}
    \label{situation-1}
\end{figure}  
%This situation implies that $[\![\alpha_{j_0}]\!](s_0)\neq 0$ or $[\![\beta_{j_0}]\!](s_0)\neq 0$ or $L[\chi_{j_0}](s_0) $( or $R[\chi_{j_0}](s_0)$) are undefined for some $1\leq j_0\leq J(s_0)$.
%note that we always have  $[\![\frac{\partial f_{\hat{x}}(t,\tau)}{\partial t}\Big|_{t=\cdot} ]\!](s_0)=0$ when $[\![\alpha_{j_0}]\!](s_0)=[\![\beta_{j_0}]\!](s_0)=0$. 

In this case, considering the fact that $f|_{\Om_m}\in C^1(\Omega_m)$ and $\partial\Omega_m$ is piecewise $C^1$ with finite many corners, $m=1,2,\cdots,M$, we deduce that there exists  $a$ and $b$ such that
\begin{align*}
       \beta_{j_0}(s_0)\leq b<a\leq\alpha_{j_0}(s_0)\quad\mbox{and}\quad\left\{s_0\hx+\tau\hx^{\perp}\in\mathbb R^2\Big|b\leq\tau\leq a\right\}\subset l_{\hx,s_0}\cap\partial\Omega.
   \end{align*}
Figure \ref{situation-1} gives a simulation for this case. Obviously, 
\ben
\nu(y)\parallel\hx,\quad\forall y\in \left\{s_0\hx+\tau\hx^{\perp}\in\mathbb R^2\Big|b\leq\tau\leq a\right\},
\enn
which implies that the line $l_{\hx,s_0}$  passes through some points on $Y_{\hx}(\pa \Om)$.

    %\item \textbf{$R[\alpha_{j_0}^{\prime}(\cdot)f_{\hat{x}}(\cdot,\alpha_{j_0}(\cdot))]$, $R[\beta_{j_0}^{\prime}(\cdot)f_{\hat{x}}(\cdot,\beta_{j_0}(\cdot))]$, $L[\alpha_{j_0}^{\prime}(\cdot)f_{\hat{x}}(\cdot,\alpha_{j_0}(\cdot))]$ and $L[\beta_{j_0}^{\prime}(\cdot)f_{\hat{x}}(\cdot,\beta_{j_0}(\cdot))]$ are not all finite at $s_0$ for some $1\leq j_0\leq J$.}
    
    %In this situation, suppose that $R[\alpha_{j_0}^{\prime}(\cdot)f_{\hat{x}}(\cdot,\alpha_{j_0}(\cdot))](s_0)=\infty$ and point $s_0\hx+\alpha_{j_0}(s_0)\hx^{\perp}$ is not a corner, we can deduce that $\nu(s_0\hx+\alpha_{j_0}(s_0)\hx^{\perp})\perp\hx^{\perp}$.
    %\item \textbf{ $[\![\alpha_{j_0}^{\prime}(\cdot)f_{\hat{x}}(\cdot,\alpha_{j_0}(\cdot))]\!](s_0)\neq0$ or $[\![\beta_{j_0}^{\prime}(\cdot)f_{\hat{x}}(\cdot,\beta_{j_0}(\cdot))]\!](s_0)\neq0$ for some $1\leq j_0\leq J$.}

   % In this situation, suppose that $[\![\alpha_{j_0}^{\prime}(\cdot)f_{\hat{x}}(\cdot,\alpha_{j_0}(\cdot))]\!](s_0)\neq0$.
   % We can deduced that point $s_0\hx+\alpha_{j_0}(s_0)\hx^{\perp}$ is either a corner, or a point whose normal vector to $\partial \Omega$ is perpendicular to $\hx^{\perp}$.
\end{itemize}

Note that as long as case one does not occur, the integration part $\int_{\beta_{j}(\cdot)}^{\alpha_{j}(\cdot)}\frac{\partial f_{\hat{x}}(t,\tau)}{\partial t}\Big|_{t=\cdot}  d\tau$ in \eqref{Iprime} will not contributes discontinuity to $I'_{\hx}$. Thus, in the subsequent  four cases, we always assume that the case one does not occur, i.e. $\left[\!\left[\int_{\beta_{j}(\cdot)}^{\alpha_{j}(\cdot)}\frac{\partial f_{\hat{x}}(t,\tau)}{\partial t}\Big|_{t=\cdot}  d\tau\right]\!\right](s_0)=0,\ 1\leq j\leq J(s_0)$.  
\begin{itemize}
      \item \textbf{Case two:} $R[\alpha_{j_0}](s_0)=R[\beta_{j_0}](s_0)$(or $L[\alpha_{j_0}](s_0)=L[\beta_{j_0}](s_0)$), but $L[\alpha_{j_0}](s_0)$ and $L[\beta_{j_0}](s_0)$(or $R[\alpha_{j_0}](s_0)$ and $R[\beta_{j_0}](s_0)$) are undefined for some $1\leq j_0\leq J(s_0)$ .
   \begin{figure}[htbp]
    \centering
    \includegraphics[width=.20\textwidth]{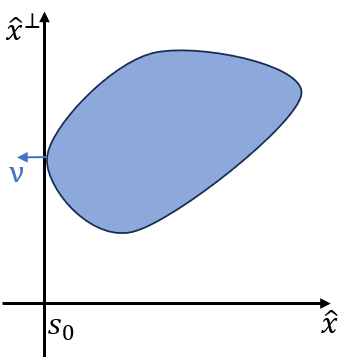}
    \caption{A simulation for the second case.}
    \label{situation-2}
\end{figure}

   As shown in Figure \ref{situation-2}, $R[\alpha_{j_0}](s_0)=R[\beta_{j_0}](s_0)$ while both $L[\alpha_{j_0}](s_0)$ and $L[\beta_{j_0}](s_0)$ are undefined. In this case, we have
   \begin{align}\label{discon-sec}
       [\![I_{\hx}^{\prime}]\!](s_0)=\left[R[\alpha_{j_0}^{\prime}](s_0)-R[\beta_{j_0}^{\prime}](s_0)\right]f(y_0).
   \end{align}
   where $y_0:=s_0\hx+R[\alpha_{j_0}](s_0)\hx^{\perp}\in\pa \Om$.
   By the assumption that $f\neq 0$ on $\pa \Om$, $[\![I^{\prime}_{\hx}]\!](s_0)\neq 0$ implies that $R[\alpha_{j_0}^{\prime}](s_0)-R[\beta_{j_0}^{\prime}](s_0)\neq0$. If both $R[\alpha_{j_0}^{\prime}](s_0)$ and $R[\beta_{j_0}^{\prime}](s_0)$ are infinity, with the help of \eqref{ab-tan}-\eqref{discon-sec}, we have $\nu(y_0)\parallel\hx$, that is the line $l_{\hx,s_0}$  passes through a point $y_0\in Y_{\hx}(\pa \Om)$. Otherwise, the point $y_0$ is a corner.
   
   Similarly, if $L[\alpha_{j_0}](s_0)=L[\beta_{j_0}](s_0)$ while both $R[\alpha_{j_0}](s_0)$ and $R[\beta_{j_0}](s_0)$ are undefined,  the line $l_{\hx,s_0}$  passes through a point $y_0\in X(\pa\Om)\cup Y_{\hx}(\pa \Om)$.

   \item \textbf{Case three:} $R[\alpha_{j_0}](s_0)=R[\beta_{j_0+1}](s_0)$(or $L[\alpha_{j_0}](s_0)=L[\beta_{j_0+1}](s_0)$), but $L[\alpha_{j_0}](s_0),\ L[\beta_{j_0+1}](s_0)$(or $R[\alpha_{j_0}](s_0),\ R[\beta_{j_0+1}](s_0)$) are undefined for some $1\leq j_0\leq J(s_0)-1$.
      \begin{figure}[htbp]
    \centering
    \includegraphics[width=.20\textwidth]{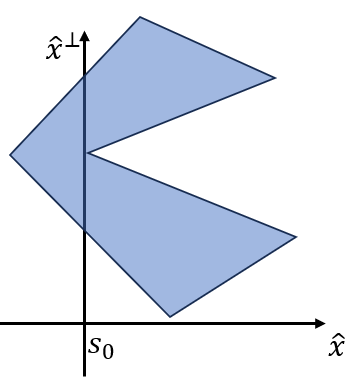}
    \caption{A simulation for the third case.}
    \label{situation-3}
\end{figure}

    As shown in Figure \ref{situation-3}, $R[\alpha_{j_0}](s_0)=R[\beta_{j_0+1}](s_0)$ while both $L[\alpha_{j_0}](s_0)$ and $L[\beta_{j_0+1}](s_0)$ are undefined. In this case,
     \begin{align}\label{discon-thr}
       [\![I_{\hx}^{\prime}]\!](s_0)=\left[R[\alpha_{j_0}^{\prime}](s_0)-R[\beta_{j_0+1}^{\prime}](s_0)\right]f(y_0).
   \end{align}
   where $y_0:=s_0\hx+R[\alpha_{j_0}](s_0)\hx^{\perp}\in\pa \Om$. Following the arguments for the second case, the line $l_{\hx,s_0}$  passes through a point $y_0\in X(\pa\Om)\cup Y_{\hx}(\pa \Om)$.
\end{itemize}
 The case two and the case three show that the discontinuities can be caused by the undefinition of $L[\alpha_j],R[\alpha_j],L[\beta_j],$ and $R[\beta_j]$. Note that the failure of the first three cases implies that $R[\alpha_j](s_0), L[\alpha_j](s_0), R[\beta_j](s_0)$ and $L[\beta_j](s_0)$ are all well defined and 
   \be\label{ABequal0}
   \left\{
   \begin{array}{lll}
       [\![\alpha_j]\!](s_0)=[\![\beta_j]\!](s_0)=0\ &{\rm for}\ j=1,2,\cdots,J(s_0),\\
        \alpha_j(s_0)>\beta_j(s_0)\ &{\rm for}\ j=1,2,\cdots,J(s_0),\\
        \alpha_j(s_0)<\beta_{j+1}(s_0)\ &{\rm for}\ j=1,2,\cdots,J(s_0)-1.
   \end{array}\right.
   \en
   Based on \eqref{ABequal0}, we have following two remaining cases.
\begin{itemize}
   \item \textbf{Case four:} $[\![\nu(\cdot\hx+\alpha_{j_0}(\cdot)\hx^{\perp})]\!](s_0)\neq 0$ (or $[\![\nu(\cdot\hx+\beta_{j_0}(\cdot)\hx^{\perp})]\!](s_0)\neq 0$) for some $1\leq j_0\leq J(s_0)$.
  \begin{figure}[htbp]
    \centering
    \includegraphics[width=.18\textwidth]{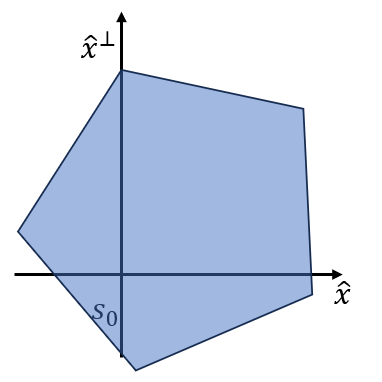}
    \caption{A simulation for the forth case.}
    \label{situation-4}
\end{figure}

   Figure \ref{situation-4} shows the case of $[\![\nu(\cdot\hx+\alpha_{j_0}(\cdot)\hx^{\perp})]\!](s_0)\neq 0$. In this case
   \begin{align}\label{discon-for}
       [\![I_{\hx}^{\prime}]\!](s_0)=[\![\alpha_{j_0}^{\prime}]\!](s_0)f(y_0).
   \end{align}
   with $y_0:=s_0\hx+\alpha_{j_0}(s_0)\hx^{\perp}$
   Obviously, $y_0\in X(\pa \Om)$ and $y_0\in l_{\hx, s_0}$.

   \item \textbf{Case five:} $[\![\nu(\cdot\hx+\alpha_{j_0}(\cdot)\hx^{\perp})]\!](s_0)= 0$ (or $[\![\nu(\cdot\hx+\beta_{j_0}(\cdot)\hx^{\perp})]\!](s_0)= 0$) , but $R[\alpha_{j_0}^{\prime}](s_0),L[\alpha_{j_0}^{\prime}](s_0)$ are infinite (or $R[\beta_{j_0}^{\prime}](s_0),L[\beta_{j_0}^{\prime}](s_0)$ are infinite) for some $1\leq j_0\leq J(s_0)$.
   \begin{figure}[htbp]
    \centering
    \includegraphics[width=.20\textwidth]{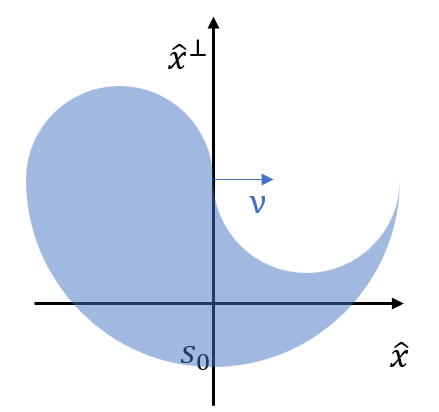}
    \caption{A simulation for the fifth case.}
    \label{situation-5}
\end{figure}

   Figure \ref{situation-5} shows a simulation for $L[\alpha_{j_0}^{\prime}](s_0)=R[\alpha_{j_0}^{\prime}](s_0)=\infty$. From \eqref{ab-tan}-\eqref{nu-tan} and \eqref{discon-for}, we deduce that 
   \begin{align*}
       \nu(s_0\hx+\alpha_{j_0}(s_0)\hx^{\perp})\parallel\hx\  {\rm and }\ s_0\hx+\alpha_{j_0}(s_0)\hx^{\perp}\in l_{\hx,s_0}\cap\partial\Omega.
   \end{align*}   

\end{itemize}
 The proof is complete. 
\end{proof}

We want to remark that the converse of the lemma \ref{Rela_betweenIandCorner} is not correct. The discontinuities contributed by the five situations in lemma \ref{Rela_betweenIandCorner} to the $I^{\prime}_{\hx}$ may cancel out each other, therefore some points in $X(\pa\Om)\cup Y_{\hx}(\pa \Om)$ may be missed by $I^{\prime}_{\hx}$ with a single observation direction. Of course, Lemma \ref{Rela_betweenIandCorner} implies that one may find all the corners with the increase of the number of the observation directions. Furthermore, under additional geometrical assumptions, we have the chance to uniquely determine $\Om$ from the multi-frequency far field patterns at finitely many observation directions. Such uniqueness results will be established in the next section.

\section{Uniqueness}
\setcounter{equation}{0}
This section is devoted to the uniqueness issue, we are interested in the inverse source problem of recovering $\Omega$ from the far field patterns $\left\{u^{\infty}(\pm\hx,k)\big|\hx\in\Theta_L,k\in(k_{\min},k_{\max})\right\}$.
To do so, we assume that $\Omega$ is composed of annuluses and polygons.
Using the fact that the far field pattern depends analytically on the wave number $k$, for each pair of observation directions $\pm\hx$, we are able to define the function $I_{\hx}$ as given by \eqref{func_Is_C}.

\subsection{Uniqueness for annular structured sources}
We begin with the annular structured sources, where $\Om$ is a combination of annuluses in the form
\begin{align}\label{f-ann}
    \Omega_m=\left\{y\in\mathbb R^2\big| r_m\leq | x-y_m| \leq R_m\right\} \  {\rm for} \   m=1,2,\cdots,M.
\end{align}
The inner diameter $r_m\in\mathbb R_{\geq 0}$, the outer diameter $ R_m\in\mathbb R^{+}$ and the center $y_m\in\mathbb R^2$ of the annuluses $\Omega_m$, $m=1,2\cdots,M$, are to be determined by the multi-frequency far field patterns at sparse observation directions.

Let $C(x,d):=\{y\in\mathbb R^2|\ |y-x|=d\}$ be a circle centered at $x$ with radius $d$, the following lemma is a corollary of Lemma \ref{Rela_betweenIandCorner}, which shows that some lines $l_{\hx,s}$ can be obtained from far field patterns $\left\{u^{\infty}(\pm\hx,k)\Big|k\in\mathbb R\right\}$.

\begin{lemma}\label{converse-ann}
    Let Assumption \ref{ass-source} hold. For any $s\in\mathbb R$, if there exists an unique $m_0\in\{1,2,\cdots,M\}$ such that the line $l_{\hx,s}$  are only tangent to the circle $C(y_{m_0},r_{m_0})$ or $C(y_{m_0},R_{m_0})$, then the line $l_{\hx,s}$ can be uniquely determined by far field patterns $\left\{u^{\infty}(\pm\hx,k)\Big|k\in\mathbb R\right\}$.
\end{lemma}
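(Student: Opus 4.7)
The plan is to identify $s_0$ as a location where $I_{\hx}'$ blows up in absolute value from one side, and to argue that, under the uniqueness-of-tangency hypothesis, this blow-up is of square-root type and is robust (it cannot be cancelled by contributions of the other annuluses). Since $I_{\hx}$ is recovered from the multi-frequency far field patterns through \eqref{func_Is_C}, its singular set is intrinsic to the data; once $s_0$ is identified the line $l_{\hx,s_0}$ is determined because $\hx$ is fixed.

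By the disjointness of the components I would write $I_{\hx}(s)=\sum_{m=1}^{M} I^{(m)}(s)$ with $I^{(m)}(s):=\int_{l_{\hx,s}\cap \Om_m} f\,d\tau$, and analyze each summand near $s_0$. For $m\neq m_0$, the hypothesis forbids tangency of $l_{\hx,s_0}$ to either bounding circle of $\Om_m$, so $l_{\hx,s}$ meets $\pa\Om_m$ transversally for all $s$ in a neighborhood of $s_0$; hence the endpoints $\alpha_j^{(m)}(s),\beta_j^{(m)}(s)$ are $C^1$ in $s$ there, and combined with $f|_{\Om_m}\in C^1(\Om_m)$ this yields $I^{(m)}\in C^1$ near $s_0$. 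In particular $(I^{(m)})'$ is bounded in a neighborhood of $s_0$ for every $m\neq m_0$.

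The decisive step concerns $I^{(m_0)}$. Placing coordinates so that $y_{m_0}$ is the origin and $\hx=(1,0)$, first consider the outer tangency $s_0=R_{m_0}$ (the inner case and opposite-sign tangencies are symmetric). For $s$ slightly below $R_{m_0}$,
\[
I^{(m_0)}(s)=\int_{-\sqrt{R_{m_0}^2-s^2}}^{\sqrt{R_{m_0}^2-s^2}} f(s,\tau)\,d\tau,
\]
while $I^{(m_0)}(s)=0$ for $s>R_{m_0}$. Since $f(y_0)\neq 0$ at the tangent point $y_0=R_{m_0}\hx$ by Assumption \ref{ass-source}, a direct expansion around $\tau=0$ gives $I^{(m_0)}(s)=2 f(y_0)\sqrt{2R_{m_0}(R_{m_0}-s)}\,(1+o(1))$ as $s\to R_{m_0}^-$, so $(I^{(m_0)})'(s)\to -\mathrm{sgn}(f(y_0))\cdot\infty$. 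The inner tangency $s_0=r_{m_0}$ produces the analogous $(r_{m_0}^2-s^2)^{-1/2}$ one-sided blow-up coming from the derivatives $\pm s/\sqrt{r_{m_0}^2-s^2}$ of the endpoints when the two integration intervals across the hole merge into one.

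Combining these steps, $|I_{\hx}'(s)|\to\infty$ on at least one side of $s_0$, while the contributions $(I^{(m)})'$ with $m\neq m_0$ remain bounded; no cancellation is possible, so $s_0$ is intrinsically detectable from $I_{\hx}$ and hence from the far field data, which determines the line $l_{\hx,s_0}$. The main obstacle is precisely the cancellation question: two square-root blow-ups of opposite sign at a common $s_0$ could in principle annihilate, and the hypothesis of a unique tangent circle is carefully tailored to preclude that degenerate configuration.
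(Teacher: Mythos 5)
Your proposal is correct and follows essentially the same route as the paper: the paper's (very terse) proof appeals to cases two and three of Lemma \ref{Rela_betweenIandCorner} plus the non-cancellation observation, and the explicit square-root blow-up $[\![I'_{\hx}]\!]\sim f(y_0)\sqrt{2d}/\sqrt{\varepsilon}$ that you compute is exactly what the paper records later in \eqref{discon-ann-p}--\eqref{discon-ann-m}. Your version is simply more self-contained, making explicit the component-wise decomposition, the boundedness of the transversal contributions, and the fact that a single unbounded term cannot be cancelled by bounded ones (only the aside about $\mathrm{sgn}(f(y_0))$ tacitly assumes $f$ real, but the magnitude blow-up needed for the conclusion holds for complex $f$ as well).
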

\begin{proof}
By the arguments of the second and third cases in Lemma \ref{Rela_betweenIandCorner}, the discontinuities of $I'_{\hx}$ at $s$ can't be canceled out because line $l_{\hx,s}$  are only tangent to the circle $C(y_{m_0},r_{m_0})$ or $C(y_{m_0},R_{m_0})$. Therefore, this line can be determined.
\end{proof}

\begin{theorem}\label{uni-ann}
 Under the assumption \ref{ass-source}, the annuluses $\Omega_m$, $m=1,2\cdots,M$, can be uniquely determined by far field patterns $\left\{u^{\infty}(\pm\hx,k)\big|\hx\in\Theta_L,k\in(k_{\min},k_{\max})\right\}$ with
    \begin{align}\label{Lin3.1}
        L> 8M-4.
    \end{align}
\end{theorem}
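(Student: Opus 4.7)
My plan is to combine Lemma \ref{Rela_betweenIandCorner} and Lemma \ref{converse-ann} with a combinatorial analysis over the observation directions. First, since $\Omega$ is a union of annuluses, its boundary contains no corners, so by Lemma \ref{Rela_betweenIandCorner} every discontinuity $s$ of $I'_{\hx}$ must correspond to a line $l_{\hx,s}$ tangent to one of the at most $2M$ boundary circles $C(y_m,r_m)$ (when $r_m>0$) and $C(y_m,R_m)$. In a given direction $\hx$, each such circle contributes the two tangent values $\hx\cdot y_m\pm\rho$, for at most $4M$ candidate discontinuity points per direction; Lemma \ref{converse-ann} then guarantees that, as long as the tangent line $l_{\hx,s}$ is tangent to only one of the $2M$ circles, the value $s$ is a genuine discontinuity of $I'_{\hx}$ and hence observable from the data.

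Next I would enumerate the ``bad'' directions in which two distinct boundary circles $C(y_i,\rho_i)$ and $C(y_j,\rho_j)$ share a common tangent line, equivalently $\hx\cdot(y_i-y_j)\in\{\pm\rho_i\pm\rho_j\}$. Since the inner and outer circles of the same annulus share a common center but distinct radii, they never share a tangent, so only pairs of circles from different annuluses create bad directions. Each such pair yields only finitely many solutions in $\mathbb S^1$, since each linear equation $\hx\cdot d=c$ with $d\ne 0$ has at most two solutions, and summed over pairs the total is a linear function of $M$ that can be controlled by the hypothesis $L>8M-4$.

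The uniqueness then follows by contradiction: suppose two annular configurations $\{\Omega_m\}$ and $\{\Omega'_{m'}\}$ yield identical far field patterns at all $\hx\in\Theta_L$, so that $I_{\hx}$ and its discontinuity set coincide for each $\hx\in\Theta_L$. Using $L>8M-4$, I would argue that for each $\Omega_m$ there remain at least two directions in $\Theta_L$ good for $\Omega_m$. In any such good direction $\hx$, the observable discontinuities must include $\hx\cdot y_m\pm r_m$ and $\hx\cdot y_m\pm R_m$, which form the unique 4-tuple symmetric about $\hx\cdot y_m$ with two distinct half-widths $r_m<R_m$; midpoint-matching among the observed discontinuities thereby isolates this 4-tuple, and combining two non-collinear good directions determines $y_m$ while the half-widths yield $r_m$ and $R_m$, pinning down $\Omega_m$. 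Applying the same analysis to each $\Omega'_{m'}$ forces the two configurations to coincide.

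The main obstacle is the precise combinatorial accounting that produces the bound $L>8M-4=4+8(M-1)$ rather than a looser one. I expect this to be achieved by an induction on $M$: for $M=1$ the base case requires only a fixed small number of directions to identify the lone annulus unambiguously, while the inductive step uses at most eight additional directions to resolve the interactions of a newly added annulus with the previously identified ones. One must also make sure that the midpoint-matching step is never ambiguous, i.e.\ that no spurious 4-tuple from different annuluses accidentally satisfies the same symmetry pattern, and that the assumption that no two elements of $\Theta_L$ are collinear is invoked exactly at the point of reconstructing $y_m$ from two projections.
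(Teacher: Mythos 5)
Your strategy---detect tangent lines via Lemma \ref{converse-ann}, discard ``bad'' directions, then reconstruct each annulus from its projections by midpoint-matching---diverges from the paper's proof precisely at the point where it has a genuine gap. The step you defer (``one must also make sure that the midpoint-matching step is never ambiguous'') is not a technicality to be checked at the end; it is the whole difficulty. In a fixed good direction $\hx$ you only observe the unordered union $\bigcup_m\{\hx\cdot y_m\pm r_m,\ \hx\cdot y_m\pm R_m\}$, and symmetric $4$-tuples with two distinct half-widths can be assembled from values belonging to \emph{different} annuluses (two congruent annuluses already produce such spurious tuples in most directions), so the extraction of the true tuple is not guaranteed by \eqref{Lin3.1}. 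A second unaddressed association problem appears when you combine two good directions: each direction yields up to $M$ candidate midpoints, and you must decide which midpoint seen in $\hx_1$ pairs with which in $\hx_2$ before solving for $y_m$; two projections alone leave up to $M^2$ candidate centers. Finally, the combinatorial accounting that should produce \eqref{Lin3.1} is only announced: the induction on $M$ is not carried out, and the claim that the number of bad directions ``summed over pairs'' is linear in $M$ is false as stated, since there are $2M(M-1)$ pairs of circles from distinct annuluses, making the global count quadratic (it is linear only for the pairs involving one fixed annulus).

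The paper's proof avoids every one of these association problems by counting \emph{lines} rather than parsing discontinuity sets, and by testing every circle in the plane. For a true boundary circle $C(y_{m_0},d_{m_0})$ there are $2L$ tangent lines with normals in $\Theta_L$, of which at most $8(M-1)$ are common tangents with the $2(M-1)$ circles of the other annuluses, so at least $2L-8(M-1)$ of them are determined by the data via Lemma \ref{converse-ann}. Conversely, any circle $C(y,d)\not\subset\pa\Om$ can carry at most $8M$ determined tangent lines, since each such line must also be tangent to one of the $2M$ true circles. Condition \eqref{Lin3.1} is exactly $2L-8(M-1)>8M$, so the true circles are characterized as those supporting strictly more than $8M$ determined tangent lines, and the annuluses are then read off by grouping concentric circles. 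No matching of discontinuities to annuluses, within a direction or across directions, is ever required. To salvage your route you would have to prove non-ambiguity of both the $4$-tuple extraction and the cross-direction pairing, which does not follow from \eqref{Lin3.1} alone.
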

\begin{proof}
Our proof is then divided into two  steps.

{\bf Step one: \em Uniqueness of circles $C(y_m, d_m),\, d_m=r_m, R_m, m=1,2,\cdots, M$.}
%and $A(x,r,R):=\{y\in\mathbb R^2|\ r\leq|y-x|\leq R\}$.

We claim that, for any $C(y_{m_0},d_{m_0})\subset\partial\Omega_{m_0}$, there are at least $2L-8(M-1)$ lines which are tangent to the $C(y_{m_0},d_{m_0})$ can be determined by far field patterns $\left\{u^{\infty}(\pm\hx,k)\big|\hx\in\Theta_L,k\in(k_{\min},k_{\max})\right\}$. 
Actually, with a fixed $\hx\in\Theta_L$, there are two parallel lines $l_{\hx,s_j},\ j=1,2$ which are tangent to the $C(y_{m_0},d_{m_0})$. Since any two directions in $\Theta_L$ are not collinear,  we have a total of $2L$ lines that are tangent to $C(y_{m_0},d_{m_0})$. Among these lines, there are at most 
$8(M-1)$ common tangent lines that are also tangent to other circles $C(y_{m_1},r_{m_1})$(or $C(y_{m_1},R_{m_1})$) with $m_1\neq m_0$. According to the lemma \ref{converse-ann},  we deduce that the remaining $2L-8(M-1)$ tangent lines to $C(y_{m_0},d_{m_0})$ can be uniquely determined.

We turn to the case of a circle $C(y,d)\not\subset\partial\Omega$. If a line $l_{\hx,s}$, $\hx\in\Theta_L$ which is tangent to the $C(y,d)$ can be determined by far field patterns $\left\{u^{\infty}(\pm\hx,k)\big|\hx\in\Theta_L,k\in(k_{\min},k_{\max})\right\}$, then the line $l_{\hx,s}$ must be common tangent line that are tangent to $C(y_{m},r_{m})$(or $C(y_{m},R_{m})$) for some $1\leq m\leq M$. Thus, we can obtain at most $8M$ lines that are tangent to the $C(y,d)$.

Under the condition \eqref{Lin3.1}, we have  $2L-8(M-1)>8M$, that is, the circles $C(y_m, d_m),\, d_m=r_m, R_m$, produce more tangent lines than any other circles. Therefore, all the circles $C(y_m, d_m),\, d_m=r_m, R_m$ are uniquely determined by the far field patterns $\left\{u^{\infty}(\pm\hx,k)\big|\hx\in\Theta_L,k\in(k_{\min},k_{\max})\right\}$. 
    
{\bf Step two: \em Uniqueness of annuluses $\Om_m, \,m=1,2,\cdots,M$.}

Finally, we collect the concentric circles $C(y,d_j)$ we have found in the first step. Assume that
$d_1<d_2<\dots,d_J$. If $J$ is an even number, then $\Omega_{x,j}=\left\{y\in\mathbb R^2\big| d_{2j-1}\leq | x-y| \leq d_{2j}\right\}$, $j=1,2,\cdots, J/2$, are part of the annuluses we are looking for.
If $J$ is an odd number, then $\Omega_{x,1}=\left\{y\in\mathbb R^2\big| | x-y| \leq d_{1}\right\}$ and $\Omega_{x,j}=\left\{y\in\mathbb R^2\big| d_{2j}\leq | x-y| \leq d_{2j+1}\right\}$, $j=1,2,\cdots, (J-1)/2$, are part of the annuluses we are looking for. Following this procedure, since $M$ is finite, we obtain all the components of $\Om$.
\end{proof}

\subsection{Uniqueness for polygonal structured sources}
In this subsection, we assume that each $\Omega_m$ is a simply connected polygon, denote by $P_1,P_2,\cdots,P_N$ and $e_1,e_2,\cdots,e_N$, respectively, the corners and the edges of the $\partial\Omega$. the following lemma is again a corollary of Lemma \ref{Rela_betweenIandCorner}, which also shows that some lines $l_{\hx,s}$ can be obtained from far field patterns $\left\{u^{\infty}(\pm\hx,k)\Big|k\in\mathbb R\right\}$.

\begin{lemma}\label{converse-plo}
    Let Assumption \ref{ass-source} hold. For any $s\in\mathbb R$, if there exists an unique $n_0\in\{1,2,\cdots,N\}$ such that the line $l_{\hx,s}$ only pass through the corner $P_{n_0}$, then the line $l_{\hx,s}$ can be uniquely determined by far field patterns $\left\{u^{\infty}(\pm\hx,k)\Big|k\in\mathbb R\right\}$.
\end{lemma}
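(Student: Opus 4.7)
The proof should follow the template of Lemma \ref{converse-ann}. First, using the multi-frequency far field patterns $\{u^{\infty}(\pm\hx,k)\mid k\in\mathbb R\}$ and formula \eqref{func_Is_C}, we reconstruct the whole function $I_{\hx}$ on $\mathbb R$ and, in particular, locate every point at which $I'_{\hx}$ fails to exist. Once we prove that the specific $s$ singled out in the statement is such a discontinuity point, the line $l_{\hx,s}$ is fully determined, which is the goal.

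The remaining work is to identify, among the five cases in the proof of Lemma \ref{Rela_betweenIandCorner}, which ones can occur at this $s$, and then to show that they contribute a nonzero jump. Because every $\Om_m$ is polygonal, the outward normal is constant along each edge, so $\alpha_j(s)$ and $\beta_j(s)$ are piecewise affine and their slopes $\alpha'_j,\beta'_j$ are locally constant, changing only as $l_{\hx,s}$ sweeps past a corner of $\pa\Om$. The hypothesis that $l_{\hx,s}$ meets $X(\pa\Om)\cup Y_{\hx}(\pa\Om)$ only at the single corner $P_{n_0}$ rules out Cases 1 and 5, since these require an edge of $\pa\Om$ whose normal is parallel to $\hx$ to lie on the line. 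Therefore only the contribution from the single corner $P_{n_0}$ can produce a discontinuity at $s$, falling into one of Cases 2, 3, or 4.

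In each of these three cases, the jump formulas \eqref{discon-sec}, \eqref{discon-thr}, and \eqref{discon-for} in the proof of Lemma \ref{Rela_betweenIandCorner} yield a discontinuity of the form $[\![I'_{\hx}]\!](s)=\Delta\,f(P_{n_0})$, where $\Delta$ is a difference of one-sided values of $\alpha'_{j_0}$ or $\beta'_{j_0}$ at $s$. Via \eqref{ab-tan}, $\Delta$ is a trigonometric combination of the angles that $\hx$ makes with the outward normals of the two edges of $\pa\Om$ meeting at $P_{n_0}$. Since $P_{n_0}$ is a genuine corner, these two outward normals are distinct; moreover, neither of them is parallel to $\hx$, because otherwise the corresponding edge would lie in $Y_{\hx}(\pa\Om)$ and intersect $l_{\hx,s}$ in more than just the point $P_{n_0}$, contradicting the hypothesis. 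A short trigonometric check then gives $\Delta\neq 0$, and combined with $f(P_{n_0})\neq 0$ from Assumption \ref{ass-source} we conclude $[\![I'_{\hx}]\!](s)\neq 0$. Hence $s$ is a visible discontinuity point of $I'_{\hx}$, and the line $l_{\hx,s}$ is uniquely recovered.

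The main obstacle I anticipate is the bookkeeping when $l_{\hx,s}$ also crosses $\pa\Om$ transversally along several edges that carry no points of $X(\pa\Om)\cup Y_{\hx}(\pa\Om)$ on the line. At such transversal crossings the relevant $\alpha_j$ and $\beta_j$ are $C^1$ near $s$ and produce no jump in \eqref{Iprime}, so the sum over $j$ in that representation picks up only the contribution from $P_{n_0}$; this rules out any accidental cancellation and closes the argument.
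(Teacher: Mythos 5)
Your proposal is correct and follows essentially the same route as the paper: the paper's proof likewise reduces to Cases 2, 3, and 4 of Lemma \ref{Rela_betweenIandCorner}, observes that the hypothesis forces no edge to lie on $l_{\hx,s}$ (ruling out Cases 1 and 5), and concludes that the single-corner jump cannot cancel. Your version merely spells out the details the paper leaves implicit, namely that transversal edge crossings contribute no jump and that the tangent difference $\Delta$ at a genuine corner with both incident normals non-parallel to $\hx$ is nonzero.
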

\begin{proof}
From the lemma \ref{Rela_betweenIandCorner}, if $I_{\hx}^{\prime}$ does not exist at $s$, then either the line $l_{\hx,s}$ pass through some corners or there are $e_j\subset l_{\hx,s}$ for some $1\leq j\leq N$. By the arguments of the second, the third, and the forth cases in Lemma \ref{Rela_betweenIandCorner}, if the line $l_{\hx,s}$ only pass through a single corner of $\partial\Omega$ (in this case, no edge lies on $l_{\hx,s}$), then the discontinuities of $I'_{\hx}$ at $s$ can't be canceled out. Therefore, this line can be determined.
\end{proof}

%Each $\Omega_m$ is simply connected polygon guarantees that there are $N$ edges.
\begin{theorem}\label{uni-pol}
    Let Assumption \ref{ass-source} hold. If every $\Omega_m$ is a simply connected polygon, then all the corners $P_1,P_2,\cdots,P_N\in X(\pa\Om)$ can be uniquely determined by far field patterns $\left\{u^{\infty}(\pm\hx,k)\big|\hx\in\Theta_L,k\in(k_{\min},k_{\max})\right\}$ with
    \begin{align}\label{Lin3.2}
        L> 2N-1.
    \end{align}
\end{theorem}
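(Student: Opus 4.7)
The strategy mirrors that of Theorem \ref{uni-ann}: from the data I would first recover a family of lines, then show that every corner is met by strictly more of these lines than any non-corner point in the plane, and finally characterize the corners by this line-multiplicity. For each $\hx\in\Theta_L$, the multi-frequency data determine $I_{\hx}$ on $\mathbb R$ and hence the set $S_{\hx}$ where $I_{\hx}'$ fails to exist; by Lemma \ref{converse-plo} every line that passes through exactly one corner lies in the recovered family $\mathcal L:=\bigcup_{\hx\in\Theta_L}\{l_{\hx,s}:s\in S_{\hx}\}$.

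\textbf{Lower bound at a corner.} Fix a corner $P_n$ and call $\hx\in\Theta_L$ \emph{good for} $P_n$ if $\hx\cdot P_n\neq\hx\cdot P_m$ for every $m\neq n$. For each other corner $P_m$, the equation $\hx\cdot P_n=\hx\cdot P_m$, i.e., $\hx\perp(P_m-P_n)$, holds for at most one $\hx\in\Theta_L$ by the non-collinearity hypothesis on $\Theta_L$. Hence at most $N-1$ directions are bad for $P_n$, and consequently at least $L-N+1$ distinct lines of $\mathcal L$ pass through $P_n$.

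\textbf{Upper bound at a non-corner and conclusion.} Let $Q\in\mathbb R^2$ be any non-corner point. Any line of $\mathcal L$ through $Q$ has a unique normal $\hx\in\Theta_L$ and must therefore be $l_{\hx,\hx\cdot Q}$; by Lemma \ref{Rela_betweenIandCorner} this line meets $X(\pa\Om)\cup Y_{\hx}(\pa\Om)$. For polygons this forces at least one corner onto the line, because an edge contributing a point of $Y_{\hx}(\pa\Om)$ is parallel to $l_{\hx,\hx\cdot Q}$ and so, when contained in it, drags both of its corner endpoints with it. Since $Q$ is not a corner, for each corner $P_n$ the relation $\hx\perp(P_n-Q)$ selects at most one $\hx\in\Theta_L$, so at most $N$ lines of $\mathcal L$ pass through $Q$. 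Under the hypothesis $L>2N-1$ we have $L-N+1>N$, so the corners are precisely the points of $\mathbb R^2$ lying on at least $L-N+1$ lines of $\mathcal L$; this uniquely recovers $\{P_1,\ldots,P_N\}$ from the data.

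\textbf{Main obstacle.} The delicate step is the upper bound: every detected line meeting a non-corner $Q$ must trace back to a genuine corner on that line. Lemma \ref{Rela_betweenIandCorner} only guarantees that the line meets $A_{\hx}$, and $Y_{\hx}(\pa\Om)$ contains interior edge points, so some care is needed to rule out a phantom detected line through $Q$ that is witnessed purely by a normal-parallel edge point; the edge-parallelism observation above closes this gap. The cancellation warning after Lemma \ref{Rela_betweenIandCorner} only shrinks $S_{\hx}$, never enlarges it, so it cannot inflate the upper bound at $Q$, and the threshold $L>2N-1$ emerges sharply from this line-counting argument.
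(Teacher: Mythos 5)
Your proposal is correct and follows essentially the same line-counting argument as the paper: at least $L-N+1$ detected lines through each corner versus at most $N$ through any non-corner, with the threshold $L>2N-1$ separating the two. Your explicit handling of the potential ``phantom'' line witnessed only by a normal-parallel edge point (observing that such an edge lies entirely in $l_{\hx,s}$ and hence carries its corner endpoints onto the line) is a welcome clarification of a step the paper leaves implicit in Lemma \ref{converse-plo}, but it does not change the substance of the argument.
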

\begin{proof}
With a fixed observation direction $\hx\in\Theta_L$, from the lemma \ref{converse-plo}, if the line $l_{\hx,s}$ only pass through a single corner of $\partial\Omega$, then the line $l_{\hx,s}$ can be obtained.

%    We deduce that the line $l_{\hx,s}$ which pass through an unique corner can be determined by  far field patterns $\left\{u^{\infty}(\pm\hx,k)\big|k\in(k_{\min},k_{\max})\right\}$, meanwhile, the line $l_{\hx,s}$ which pass through some corners simultaneously may not be available.

For a corner $P_0\in X(\pa\Om)$, we are able to find at least $L-(N-1)$ lines which pass through $P_0$ by the indicator function $I_{\hx}, \hx\in\Theta_L$, which is defined by the far field patterns $\left\{u^{\infty}(\pm\hx,k)\big|\hx\in\Theta_L,k\in(k_{\min},k_{\max})\right\}$.
Actually, since any two directions in $\Theta_L$ are not collinear, we have $L$ lines $l_{\hx,\hx\cdot P_0}$ passing through $P_0$. Among these lines, there are at most $N-1$ lines that can pass through $P_0$ and another corner simultaneously. Therefore, we finally obtain at least $L-(N-1)$ lines passing through $P_0$.

We now turn to the points $P\in\mathbb R^2\backslash\{P_1,P_2,\cdots,P_N\}$. If the line $l_{\hx,\hx\cdot P},\ \hx\in\Theta_L$ can be determined by the indicator function $I_{\hx}$, we deduce that $\hx\cdot P_j=\hx\cdot P$ for some $1\leq j\leq N$. Consequently, there are at most $N$ lines can be determined.

Under the condition \eqref{Lin3.2}, comparing the number of lines determined by the indicator functions $I_{\hx}, \hx\in\Theta_L$, we deduce that all the corners can be determined by the far field patterns $\left\{u^{\infty}(\pm\hx,k)\big|\hx\in\Theta_L,k\in(k_{\min},k_{\max})\right\}$. 
\end{proof}

Note that from the representation \eqref{Iprime}-\eqref{discon-thr} and \eqref{discon-for}, $f|_{\{P_1,P_2,\cdots,P_N\}}\neq 0$ is sufficient to establish the theorem \ref{uni-pol}, we don't need to further request that $f(y)\neq 0, \ \forall y\in\partial\Omega$.
\begin{theorem}\label{uni-pol-edge}
   Let Assumption \ref{ass-source} hold. If every $\Omega_m$ is a simply connected polygon and all the corners are known, we can choose  finite observation direction $\hx_i,\ 1\leq i\leq T$, such that all the edges $e_1,e_2,\cdots,e_N$ as well as $f(P_1),f(P_2),\cdots,f(P_N)$ can be determined by the far field patterns $\left\{u^{\infty}(\pm\hx_i,k)\big|1\leq i\leq T,k\in(k_{\min},k_{\max})\right\}$.
\end{theorem}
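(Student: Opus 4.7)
The plan is to exploit the explicit jump formula derived in Case four of the proof of Lemma \ref{Rela_betweenIandCorner}. Since the corner set $\{P_1,\ldots,P_N\}$ is assumed known, a generic choice of observation directions will isolate the contribution of each $P_j$ to the discontinuities of $I'_{\hx}$ and thereby give us linear access to $f(P_j)$, multiplied by a geometric factor that depends on the (unknown) edge normals at $P_j$.

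First, I will pick a finite set $\Theta_T=\{\hx_1,\ldots,\hx_T\}\subset\mathbb S^1$ satisfying $\hx_i\cdot P_j\neq \hx_i\cdot P_k$ whenever $j\neq k$; the excluded directions form a finite discrete subset of $\mathbb S^1$, so such a choice is always possible. For every $\hx_i$ and every corner $P_j$, the line $l_{\hx_i,\hx_i\cdot P_j}$ meets $\{P_1,\ldots,P_N\}$ only at $P_j$, so Lemma \ref{converse-plo} guarantees that it is determined by $\{u^\infty(\pm\hx_i,k)\}$ and the jump $[\![I'_{\hx_i}]\!](\hx_i\cdot P_j)$ is accessible from the data. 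Combining \eqref{ab-tan} with \eqref{discon-for} expresses this jump in the explicit form
\begin{equation*}
[\![I'_{\hx_i}]\!](\hx_i\cdot P_j)=G(\hx_i;\nu_j^+,\nu_j^-)\,f(P_j),
\end{equation*}
where $\nu_j^+,\nu_j^-$ denote the outward unit normals of the two edges of $\pa\Om$ meeting at $P_j$ and $G$ is an explicit real-valued trigonometric expression in the angles involved.

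Next, for each fixed $P_j$ I will recover $(\nu_j^+,\nu_j^-,f(P_j))$ as follows. Taking ratios of the complex jumps across distinct direction pairs $(\hx_i,\hx_1)$ cancels the factor $f(P_j)$ and produces real equations in the two unknown angles of $\nu_j^\pm$; three suitably chosen directions yield two such independent equations, whose Jacobian with respect to $(\nu_j^+,\nu_j^-)$ is generically invertible and therefore determines the two normals uniquely. Substituting back into any single jump equation then returns $f(P_j)$. With the two outward normals at every corner in hand, the two edges emanating from each $P_j$ lie on the lines through $P_j$ perpendicular to $\nu_j^\pm$, and each edge $e_n$ is recovered as the segment from $P_j$ along a known direction to the nearest other corner on that line; matching of the normals computed at the two endpoints of each $e_n$ confirms adjacency and pins down the polygonal structure.

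The main obstacle is to verify the unique solvability of the nonlinear system extracting $(\nu_j^+,\nu_j^-,f(P_j))$ from finitely many jump values, i.e., to rule out bad direction configurations where $G(\hx_i;\nu_j^+,\nu_j^-)$ vanishes or where two distinct normal pairs yield identical ratios. This is handled by a transversality/genericity argument: the bad set in $(\mathbb S^1)^T$ depends only on the already-known corners $P_1,\ldots,P_N$ and has Lebesgue measure zero, so a generic selection of finitely many directions (in fact, a number of $\hx_i$'s that is bounded in terms of $N$ alone) avoids it, completing the proof.
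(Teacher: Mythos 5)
Your overall strategy is viable and genuinely different from the paper's. You attempt to invert the jump data for the two unknown edge normals at each corner by taking ratios over several generically chosen directions, whereas the paper proceeds adaptively: for each candidate segment $\overline{P_iP_j}$ it picks an observation direction nearly orthogonal to that segment, so that if the candidate is a true edge one cotangent term in the jump formula blows up and $|[\![I^{\prime}_{\hx}]\!](\hx\cdot P_i)|$ provably exceeds an explicit threshold built from the finite set $F_i$ of a priori possible values of $f(P_i)$ (inequality \eqref{real edge}), while for a false candidate the jump stays below that threshold (inequality \eqref{fake edge}). Your back-substitution to get $f(P_j)$ at the end matches the paper's use of $[\![I^{\prime}_{\hx_0}]\!](\hx_0\cdot P_j)$.

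The crucial step of your argument, however, is not proved. First, ``the Jacobian is generically invertible and therefore determines the two normals uniquely'' conflates local with global injectivity: an invertible Jacobian only excludes nearby spurious solutions, whereas a uniqueness theorem requires that no other normal pair anywhere on $(\mathbb S^1)^2$ reproduces the same ratios. (Note also that $G$ is invariant under $\nu\mapsto-\nu$ and under swapping the two normals, so at best you recover the unoriented edge lines, and the sign needed to pin down $f(P_j)$ is only fixed after the polygons have been assembled.) Second, and more seriously, the genericity argument is incorrect as stated: the ``bad set'' of direction tuples for which two distinct normal pairs yield identical ratios depends on the unknown normals, which range over a two-dimensional continuum, not only on the known corners $P_1,\dots,P_N$; a union of null sets over a continuum of parameters need not be null, so you cannot select the $\hx_i$ a priori on that basis. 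To close the gap you must either prove global injectivity for one fixed admissible tuple of directions --- this is doable, since writing $\hx=(\cos\phi,\sin\phi)$ and denoting by $\psi^{\pm}$ the angles of the two normals one finds, up to case-dependent signs and a constant factor, $1/G(\phi)=A+B\cos 2\phi+C\sin 2\phi$ with $A=\cos(\psi^{+}-\psi^{-})$, $(B,C)=-(\cos(\psi^{+}+\psi^{-}),\sin(\psi^{+}+\psi^{-}))$, so that four or five directions determine the edge lines; but this computation is precisely the content of the proof and is absent --- or choose the directions adaptively per candidate edge, which is what the paper does.
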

\begin{proof}
Taking $\hx_0\in\mathbb S^1$ such that $\hx_0\cdot P_i\neq\hx_0\cdot P_j,\ \forall\ i\neq j$. After relabeling we can assume that
 \begin{align}\label{x0-chosen}
     \hx_0\cdot P_i<\hx_0\cdot P_{i+1},\ 1\leq i\leq N-1.
 \end{align}
 Then $I^{\prime}_{\hx_0}$ does not exist at $\hx_0\cdot P_i,\ 1\leq i\leq N$ and $[\![I^{\prime}_{\hx_0}]\!](\hx_0\cdot P_i)$ are known for $\ 1\leq i\leq N$. From the arguments of the second, the third, and the forth cases in Lemma \ref{Rela_betweenIandCorner} and the representation \eqref{Iprime}-\eqref{discon-thr} and \eqref{discon-for}, we deduce that $[\![I^{\prime}_{\hx_0}]\!](\hx_0\cdot P_i),\ 1\leq i\leq N$ are finite. Furthermore, for all possible edges passing through the point $P_i$ and all possible corresponding normal directions, we can calculate all the possible $f(P_i)$ and collect them in a set $F_i$.
 
For any corner $P_{n_0}$, we aim to identify whether $\overline{P_1P_{n_0}}$ is really an edge of $\Omega$ in two steps.
\begin{itemize}
    \item First, $P_{n_0}$ should be the closest corner to $P_1$ in the $\overrightarrow{P_1P_{n_0}}$ direction, otherwise it contradicts the assumption that all the simply connected $\Omega_m$ are disjoint with each other.
    \item Second, define $\mathcal{NP}(n_0):=\{ 1<n\leq N|\overrightarrow{P_1P_n}\  {\rm is \ not\ parallel \ to} \ \overrightarrow{P_1P_{n_0}}\}$ and
    \begin{align}\label{def-theta1}
        \theta_1:=\min\limits_{\mathcal{NP}(n_0)}\left\{
        \min\big\{\big|\langle\overrightarrow{P_1P_n},-\overrightarrow{P_1P_{n_0}}\rangle\big|,\big|\langle\overrightarrow{P_1P_n},\overrightarrow{P_1P_{n_0}}\rangle\big|\big\}\right\}\in(0,\frac{\pi}{2}].
    \end{align}
    Then, we can choose an observation direction $\hx_1$ such that $\hx_1\cdot P_1\neq\hx_1\cdot P_n,\ \forall \ 1<n\leq N$ and 
    \begin{align}\label{def-hx1}
        \big|\langle\hx_1,\overrightarrow{P_1P_{n_0}}^{\perp}\rangle\big|<\frac{\theta_1}{2}\quad {\rm and}\quad |\tan(\langle\hx_1,\overrightarrow{P_1P_{n_0}}\rangle)|>\left|\tan(\frac{\pi-\theta_1}{2})\right|\left(1+2\frac{\max\limits_{x\in F_1}|x|}{\min\limits_{x\in F_1}|x|}\right)
    \end{align}
    If $\overline{P_1P_{n_0}}$ is really an edge of $\Omega$ (suppose that another real edge is $\overline{P_1P_{n^{\star}}}$), then \eqref{Iprime}-\eqref{discon-thr} and \eqref{discon-for} implies that 
    \be\label{real edge}
        |[\![I^{\prime}_{\hx_1}]\!](\hx_1\cdot P_1)|&\geq& \left(|\tan(\langle\hx_1,\overrightarrow{P_1P_{n_0}}\rangle)|-|\tan(\langle\hx_1,\overrightarrow{P_1P_{n^\star}}\rangle)|\right)\min\limits_{x\in F_1}|x|\cr
        &\geq& \left(|\tan(\langle\hx_1,\overrightarrow{P_1P_{n_0}}\rangle)|-|\tan(\frac{\pi}{2}-\theta_1+\frac{\theta_1}{2})|\right)\min\limits_{x\in F_1}|x|.
    \en
If $\overline{P_1P_{n_0}}$ is not an edge of $\Omega$ (suppose that two real edge are $\overline{P_1P_{n^{\star}}}$ and $\overline{P_1P_{n_{\star}}}$), then we have
    \be\label{fake edge}
        |[\![I^{\prime}_{\hx_1}]\!](\hx_1\cdot P_1)|&\leq &\left(|\tan(\langle\hx_1,\overrightarrow{P_1P_{n_{\star}}}\rangle)|+|\tan(\langle\hx_1,\overrightarrow{P_1P_{n^\star}}\rangle)|\right)\max\limits_{x\in F_1}|x|\cr
        &\leq&2|\tan(\langle\frac{\pi}{2}-\theta_1+\frac{\theta_1}{2}\rangle)|\max\limits_{x\in F_1}|x|.
    \en
We can identify whether $\overline{P_1P_{n_0}}$ is really an edge of $\Omega$ with the help of \eqref{def-hx1}-\eqref{fake edge}, so we can identify two real edges passing through $P_1$ by repeating this procedure.
\end{itemize}
Similarly, we can gradually identify all real edges passing through $P_2,P_3,\cdots,P_N$, that is, all the edges $e_1,e_2,\cdots,e_N$ can be determined by far field patterns. Finally, with the help of \eqref{discon-sec}, \eqref{discon-thr}, and \eqref{discon-for}, the values $f(P_1),f(P_2),\cdots,f(P_N)$ can be computed from $[\![I^{\prime}_{\hx_0}]\!](\hx_0\cdot P_i),\ 1\leq i\leq N$.
The proof is complete.
%Denote by $\nu_1,\nu_2,\cdots,\nu_N$, respectively, unit normal vector on the $e_1,e_2,\cdots,e_N$ directed into the exterior of $\Omega$
\end{proof}

We have an elementary estimate of $T$.  In the worst case, besides $\hx_0$, we need at most $\binom{N}{2}$ observation directions to identify all possible edges.
% observation directions are used to identify an unknown edge when there are $t$ edges have been determined, 
So we have 
\begin{align*}
    T\leq1+\binom{N}{2}=\frac{N^2-N+2}{2}.
\end{align*}

\subsection{Uniqueness for mixed structured sources}
Finally, we consider a mixed structured source. Each connected component $\Omega_m$ is either a simple connected polygon or an annulus. Assume that $\Omega$ has $M$ annuluses and $N$ corners.  

Before introducing the uniqueness theorem, let's analyze the contribution of each $\Omega_m$ to the discontinuity of function $I^{\prime}_{\hx}$.

If $\Omega=A(y_0,r,R)$, then for any observation direction $\hx$, $I_{\hx}^{\prime}$ is only discontinuous at $\hx\cdot y_0\pm R$ and $ \hx\cdot y_0\pm r$. In addition, according to \eqref{discon-sec} and \eqref{discon-thr}, for $d=r,R$ we deduce that 
\be
\label{discon-ann-p}
[\![I_{\hx}^{\prime}]\!](\hx\cdot y_0+ d)&=& -2 G_df^d_{+}\lim\limits_{s\to(\hx\cdot y_0+ d)^{-}}\frac{s-\hx\cdot y_0}{\sqrt{d^2-(s-\hx\cdot y_0)^2}}\cr
    &=&-2G_df^d_{+}\lim\limits_{\varepsilon^+_d\to0^{+}}\frac{d-\varepsilon^+_d}{\sqrt{d^2-(d-\varepsilon^+_d)^2}}\cr
    &=&-G_d f^d_{+}\sqrt{2d}\lim\limits_{\varepsilon^+_d\to0^{+}}\frac{1}{\sqrt{\varepsilon^+_d}},
\en
and
\be\label{discon-ann-m}
[\![I_{\hx}^{\prime}]\!](\hx\cdot y_0-d)&=& -2G_df^d_{-}\lim\limits_{s\to(\hx\cdot y_0- d)^{+}}\frac{s-\hx\cdot y_0}{\sqrt{d^2-(s-\hx\cdot y_0)^2}}\cr
    &=&-2G_df^d_{-}\lim\limits_{\varepsilon^-_d\to0^{+}}\frac{-(d-\varepsilon_d^-)}{\sqrt{d^2-(d-\varepsilon_d^-)^2}}\cr
    &=&G_d f^d_{-}\sqrt{2d}\lim\limits_{\varepsilon_d^-\to0^{+}}\frac{1}{\sqrt{\varepsilon_d^-}}.
\en
Here, $f^{d}_{\pm}:=f|_{\partial\Omega\cap l_{\hx,(\hx\cdot y_0\pm d)}}$, denote the value of source $f$ at the tangent point of the line $l_{\hx,s}$ and the circle $C(y_0,d)$, meanwhile, $\varepsilon^{\pm}_d:=d\pm(\hx\cdot y_0-s)$ and $G_R=1,G_r=-1$.

If $\Omega$ is a polygon, we further suppose that observation direction $\hx$ satisfies that $e_0\subset l_{\hx,s}$ for some $s$ and an unique edge $e_0$. Let $e_0=\overline{P_{\star}P^{\star}}$ and denote by $\nu^{\star},\nu_{\star}$, respectively, the unit normal vector at $P^{\star}, P_{\star}$ which is not perpendicular to $\hx$. With the help of \eqref{Iprime} and \eqref{ab-tan}, we have 
\begin{align}\label{discon-pol}
   \left|[\![I_{\hx}^{\prime}]\!](s)  \right|\leq |\overline{P_{\star}P^{\star}}| \max\limits_{y\in e_0}|[\![\nabla f(y)\cdot\hx]\!]|+|\cot(\langle\nu_{\star},\hx\rangle)f(P_{\star})|+|\cot(\langle\nu^{\star},\hx\rangle)f(P^{\star})|
\end{align}
The representation \eqref{discon-ann-p}, \eqref{discon-ann-m}, \eqref{fake edge}, \eqref{discon-pol}, and linearity implies that the contribution of every annulus to the discontinuity of function $I^{\prime}_{\hx}$ is infinite ($O (1/\sqrt{\varepsilon_d^{\pm}})$), while the contribution of each polygon to the discontinuity of function $I^{\prime}_{\hx}$ is always finite.
\begin{theorem}\label{uni-mix}
 Let Assumption \ref{ass-source} hold. Assume further that $\Omega$ is composed of $M$ annuluses and some polygons with $N$ corners, then all the annuluses and the corners $P_1,P_2,\cdots,P_N$ can be uniquely determined by far field patterns $\left\{u^{\infty}(\pm\hx,k)\big|\hx\in\Theta_L,k\in(k_{\min},k_{\max})\right\}$ with
    \begin{align}\label{Lin3.4}
        L> \max\{8M-4, 4M+2N-1\}.
    \end{align}
\end{theorem}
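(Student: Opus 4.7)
The plan is to leverage the observation recorded just before the theorem: a tangency of a line $l_{\hx,s}$ to an annulus circle produces an \emph{infinite} jump $[\![I'_{\hx}]\!](s)$ of order $\mathcal{O}(1/\sqrt{\varepsilon_d^{\pm}})$ by \eqref{discon-ann-p}--\eqref{discon-ann-m}, whereas every singularity originating from a polygonal corner or edge gives a bounded jump by \eqref{discon-pol}. Since $I_{\hx}$ is recovered from the multi-frequency data at $\pm\hx$, this dichotomy allows me to split the finitely many discontinuities of $I'_{\hx}$ into an ``annulus class'' (infinite jumps, arising exclusively from annulus tangencies) and a ``polygon class'' (finite jumps, arising from corners). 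I would then carry out the annulus reconstruction and the corner reconstruction independently on these two disjoint classes.

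For the annulus reconstruction I would repeat the counting of Theorem~\ref{uni-ann} applied to the annulus-class lines. Each boundary circle $C(y_m,d_m)$ still produces $2L$ tangent lines, and at most $8(M-1)$ of them are common tangents with another boundary circle (at most $4$ common tangents per pair, $2M-2$ partners from the other annuluses, and $0$ with the concentric partner of the same annulus). Hence at least $2L-8(M-1)$ annulus-class lines are uniquely attributable to $C(y_m,d_m)$, while any spurious circle $C(y,d)\not\subset\partial\Omega$ can accrue at most $8M$ such lines, because each of its tangent lines must also be tangent to a genuine boundary circle. The first inequality in \eqref{Lin3.4} then yields $2L-8(M-1)>8M$ and reconstructs every annulus exactly as in Theorem~\ref{uni-ann}, the presence of the polygonal parts being harmless since they can only contribute finite jumps.

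For the corner reconstruction I would adapt Theorem~\ref{uni-pol}. Fix a corner $P_0\in\{P_1,\ldots,P_N\}$; among the $L$ directions in $\Theta_L$, at most $N-1$ produce lines $l_{\hx,\hx\cdot P_0}$ that pass through a second corner, and at most $4M$ produce lines tangent to some annulus circle. Indeed, for a single circle $C(y_m,d_m)$ the condition $\hx\cdot(P_0-y_m)=\pm d_m$ has four solutions on $\mathbb{S}^1$ arranged in two antipodal pairs, so the non-collinearity assumption on $\Theta_L$ keeps at most $2$ of them; summing over the $2M$ boundary circles gives the bound $4M$. This leaves at least $L-(N-1)-4M$ directions whose associated line yields a clean polygon-class jump uniquely attributable to $P_0$. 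Conversely, any non-corner point $P\in\mathbb R^2\setminus\{P_1,\ldots,P_N\}$ can be credited with at most $N$ polygon-class lines, since each such line must pass through a genuine corner of $\partial\Omega$. The second inequality in \eqref{Lin3.4} guarantees $L-(N-1)-4M>N$, hence all corners are identified.

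The main technical point I expect to demand the most care is the cleanness of the infinite/finite dichotomy in degenerate configurations: if a single value $s_0$ corresponds to simultaneous tangencies at several circles (possibly also coinciding with a corner), one must rule out that the $1/\sqrt{\varepsilon}$ contributions telescope into a finite quantity. Using the explicit residues in \eqref{discon-ann-p}--\eqref{discon-ann-m}, each divergent contribution is a nonzero multiple of $1/\sqrt{s_0-s}$ whose sign is fixed by $G_d\in\{\pm 1\}$ and by the value of $f$ at the tangent point, and under Assumption~\ref{ass-source} ($f\neq 0$ on $\partial\Omega$, $d\in\{r_m,R_m\}>0$) a finite sum of such terms stays divergent outside a thin set of degenerate configurations, which do not affect the counting arguments above. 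Combining the two steps then proves the theorem.
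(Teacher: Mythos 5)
Your proposal is correct and follows essentially the same route as the paper: the infinite-versus-finite jump dichotomy from \eqref{discon-ann-p}--\eqref{discon-pol} to separate annulus tangencies from corner contributions, the tangent-line counting of Theorem \ref{uni-ann} for the annuluses, and the line-through-corner counting of Theorem \ref{uni-pol} for the corners. The only cosmetic difference is in the corner step, where you subtract the $4M$ annulus-tangent directions from the count guaranteed for a true corner while the paper instead adds $4M$ to the maximum count for a non-corner point; both bookkeepings yield the identical threshold $L>4M+2N-1$.
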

\begin{proof}
   Note that the contribution of a corner $P_0$ to the discontinuity of function $I^{\prime}_{\hx}$ can only be canceled out by other corners. So do the annulus.

    Fixed an observation direction $\hx\in\Theta_L$, some parallel lines $l_{\hx,s}$ can be obtained from a knowledge of $I^{\prime}_{\hx}$. Furthermore, for those $s$ such that $[\![I^{\prime}_{\hx}]\!](s)=\infty$, line $l_{\hx,s}$ is tangent to the $C(y_m,r_m)$(or $C(y_m,R_m)$) for some $m=1,2,\cdots,M$. Thus, the same as the theorem \ref{uni-ann}, all the annuluses can be uniquely determined by far field patterns $\left\{u^{\infty}(\pm\hx,k)\big|\hx\in\Theta_L,k\in(k_{\min},k_{\max})\right\}$ with $ L\geq 8M-4.$
    
    For fixed corner $P_0$, similar to theorem \ref{uni-pol}, there are at least $L-(N-1)$ lines which pass through $P_0$ can be determined by far field patterns $\left\{u^{\infty}(\pm\hx,k)\big|\hx\in\Theta_L,k\in(k_{\min},k_{\max})\right\}$. 
    
    For any $P\in\mathbb R^2\backslash\{P_1,P_2,\cdots,P_N\}$, if the line $l_{\hx,\hx\cdot P},\ \hx\in\Theta_L$ can be determined by far field patterns $\left\{u^{\infty}(\pm\hx,k)\big|\hx\in\Theta_L,k\in(k_{\min},k_{\max})\right\}$, we deduce that $\hx\cdot P_j=\hx\cdot P$ for some $1\leq j\leq N$ or line $l_{\hx,\hx\cdot P}$ is tangent to the $C(y_m,r_m)$(or $C(y_m,R_m)$) for some $m=1,2,\cdots,M$. Thus, there are at most $4M+N$ lines can be determined.

    Then, all the corners can be determined by far field patterns with the condition \eqref{Lin3.4}.
\end{proof}

Obviously, we can choose finite observation direction $\hx_i,\ 1\leq i\leq T$, such that all the edges $e_1,e_2,\cdots,e_N$ as well as $f(P_1),f(P_2),\cdots,f(P_N)$ can be determined by far field patterns $\left\{u^{\infty}(\pm\hx_i,k)\big|1\leq i\leq T,k\in(k_{\min},k_{\max})\right\}$. This can be proved by following the steps in the theorem \ref{uni-pol-edge} and additionally require that $l_{\hx_j,\hx_j\cdot P_i},\ 1\leq j\leq T,\ 1\leq i\leq N$ is not tangent to the $C(y_m,r_m)$ or $C(y_m,R_m)$ for some $m=1,2,\cdots,M$. As for the annuluses,  with the help of \eqref{discon-ann-p}-\eqref{discon-ann-m}, we have actually obtained the values of the source function $f$ at the boundary points with the observation directions as the normal directions.
Finally, if it is further assumed the source $f$ is piecewise constant, the source $f$ can be uniquely determined by the multi-frequency far field patterns at sparse observation directions.

%%%%%%%%%%%%%%%%%%%%%%%%%%%%%%%%%%%%%%%%%%%%%%%%%%%%%%%%%%%%%%%%%%%%%%%%%%%%%%%%%%%%%%%%%%%%%%%%%%%%%%%%%%%%%%%%%%%%%%%%%%%%%%%%%%%%%%%%%%%%%%%%%%%%%%%%%%%%%%%%%%
%%%%%%%%%%%%%%%%%%%%%%%%%%%%%%%%%%%%%%%%%%%%%%%%%%%%%%%%%%%%%%%%%%%%%%%%%%%%%%%%%%%%%%%%%%%%%%%%%%%%%%%%%%%%%%%%%%%%%%%%%%%%%%%%%%%%%%%%%%%%%%%%%%%%%%%%%%%%%%%%%%
\section{Novel indicator functions and numerical simulations}
Recall that a generic sampling type method's scheme is as follows:
\begin{itemize}{\em
    \item Collect the measurements;
    \item Select a sampling region in $\R^2$ with a fine mesh $\mathcal{G}$ containing the unknown object $\Om$;
    \item Compute the indicator function $I(z)$ for all sampling point $z\in \mathcal{G}$;
    \item Plot the indicator function $I(z)$.}
\end{itemize}
The design of an indicator function is the key of a valuable sampling method.
In the subsection \ref{twoindicators}, we introduce two novel indicator functions to reconstruct the support $\Om$ of the source $f$ with the multi-frequency far field patterns $\left\{u^{\infty}(\hx,k)\big|\hx\in\Theta_L,k\in(k_{\min},k_{\max})\right\}$. The numerical simulations will be presented in the subsequent subsections 4.2-4.4. To begin with, we introduce the data to be used in the numerical simulations.
The sparse observation direction set is chosen to avoid some special directions as follow:
\ben
\Theta_L:=\left\{\Big(\cos\frac{(2l-0.7L) \pi}{L}, \sin\frac{(2l-0.7L) \pi}{L}\Big)\, \Big{|}\, l=0, 1, \cdots, L-1\right\}.
\enn
We take the wave numbers 
\ben
k_m=\frac{m}{2},\ m\in\mathbb Z,\  1\leq m\leq 2\Lambda.
\enn
Given a source function $f$, we compute
a synthetic approximation $u^{\infty}(\hx_l,k_m)$ from \eqref{u_inf_1} by the trapezoidal rule. We
further perturb this data by random noise in the form
 \begin{align*}
     u^{\infty,\delta}(\hx_l,k_m)=u^{\infty}(\hx_l,k_m)\Big(1+\delta\big[X_{l,m}+Y_{l,m}i\big]\Big),
 \end{align*}
where $X_{l,m},Y_{l,m}\sim N(0,1)$ are  independent random variables, $N(0,1)$ is a normal distribution with mean zero and standard derivation one. In all the experiments, we set $\delta=0.3$ and use a grid $\mathcal{G}$ of $P \times Q = 601\times 601$ equally spaced sampling points on the rectangle $[-3,3]\times[-3,3]$.

%Specifically, we design $\mathcal{I}^-$ and $\mathcal{I^+}$ as indicator functions to recover $\Omega$ or $\partial\Omega$. 

%Motivated by the uniqueness analyses in the previous sections, the function $\mathcal{I}^-$ aims to identify some lines passing through the points in $X(\partial\Omega)\cup Y_{\hx}(\partial\Omega)$ with different directions $\hx$. Then the boundary $\partial\Omega$ is expected to be reconstructed with the help of these lines. On the other hand, we fully utilize the the properties of far field patterns and  Fourier transform to design the $\mathcal{I}^+$. Function  $\mathcal{I}^+$ does not rely on the theoretical proof from the previous sections but exhibits impressive performance in reconstructing $\Omega$. Details will be shown in following subsections.

\subsection{Two indicator functions}\label{twoindicators}
\begin{itemize}
    \item \textbf{Indicator function $\mathcal{I}^-$.}
\end{itemize}
Motivated by the uniqueness analyses in the previous sections, we introduce the first indicator function $\mathcal{I}^-$ by
\begin{align}\label{I-z}
\mathcal{I}^-(z):=\frac{1}{L}\sum\limits_{\hx\in\Theta_L}|\mathcal{I}_{\hx}^{-}(z)|,\ z\in\mathcal{G},
\end{align}
where
\begin{align}\label{I_hx_z}
 \mathcal{I}^{-}_{\hat{x}}(z)=\int^{k_{2\Lambda}}_{k_{1}}k\left[u^{\infty,\delta}(\hat{x},k)e^{ik\hx\cdot z}-u^{\infty,\delta}(-\hat{x},k)e^{-ik\hx\cdot z}\right]dk,\quad z\in\mathcal{G}.
\end{align}
%By the well known Riemann-Lebesgue Lemma, we may ignore the high frequency oscillation part from the integral \eqref{func_Is_C} to deduce 
The function $\mathcal{I}^-_{\hx}(z)$ is an approximation of $I^{\prime}_{\hx}(\hx\cdot z)$. 

Alternatively, to quantify the discontinuity of the function $I^{\prime}_{\hx}(\hx\cdot z)$, we recall the Sobel matrices
\begin{equation*}
    S_1:=
    \begin{bmatrix}
        -1&0&1\\
        -2&0&2\\
        -1&0&1
    \end{bmatrix}\, {\rm and}\, 
    S_2:=
    \begin{bmatrix}
        -1&-2&-1\\
        0&0&0\\
        1&2&1
    \end{bmatrix}
\end{equation*}
and apply a technique which is widely used in image processing \cite{Sobel}. Precisely, we define the convolution of sampling matrix $\mathcal{I}^-_{\hx}\in\mathbb C^{P\times Q}$ induced by \eqref{I_hx_z} and the Sobel matrices $S_t\in\mathbb R^{3\times 3}, t=1,2$, by
\begin{equation}
    \mathcal{I}^-_{\hx}\otimes S_t(p,q):=\left\{
    \begin{aligned}
        &0,   \qquad{\rm if}\ p\in\{1,P\}\ {\rm or}\  q\in\{1,Q\},\\
    \sum\limits_{k,l=1}^3 &\mathcal{I}^-_{\hx}(p-k+2,q-l+2)S_t(k,l),  \quad {\rm otherwise}.
    \end{aligned}
    \right.
\end{equation}
Finally, we define the processed indicator function $\wi{\mathcal{I}}^-$ by
\begin{align}\label{IPs}
\wi{\mathcal{I}}^{-}(z):=\frac{1}{L}\sum\limits_{\hx\in\Theta_L}\wi{\mathcal{I}}_{\hx}^{-}(z),\ z\in\mathcal{G},
\end{align}
where
\begin{align}\label{Ip-hx}
    \wi{\mathcal{I}}_{\hx}^{-}(z)=\wi{\mathcal{I}}_{\hx}^{-}(p,q)=\frac{\sqrt{|\mathcal{I}_{\hx}^{-}\otimes S_1(p,q)|^2+|\mathcal{I}_{\hx}^{-}\otimes S_2(p,q)|^2}}{\max\limits_{r,t}\sqrt{|\mathcal{I}_{\hx}^{-}\otimes S_1(r,t)|^2+|\mathcal{I}_{\hx}^{-}\otimes S_2(r,t)|^2}}.
\end{align}
As mentioned in the previous sections, for fixed observation direction $\hx$, the value of the indicator function $\mathcal{I}^-_{\hx}$ is expect to blow up (or rapidly change) at points $z$ where $I_{\hx}^{\prime}(\hx\cdot z)$ does not exist. 
The indicator function $\mathcal{I}^{-}_{\hx}$ should be good at catching the tangent lines $l_{\hx,s}$ of the annuluses.
However, the processed indicator function $\wi{\mathcal{I}}^{-}_{\hx}$ is able to catch both the tangent lines $l_{\hx,s}$ of the annuluses and the lines $l_{\hx,s}$ passing through corners since the indicator function $\wi{\mathcal{I}}^{-}_{\hx}$ blows up on those lines.  
%The function $\mathcal{I}^-_{\hx}$ therefore can be used to identify some lines passing through the points in $X(\partial\Omega)\cup Y_{\hx}(\partial\Omega)$ with normal directions $\hx$. Then the boundary $\partial\Omega$ is expected to be reconstructed with the help of these lines. 

\begin{itemize}
    \item \textbf{Indicator function $\mathcal{I}^+$.}
\end{itemize}
We turn our attention to the second indicator function $\mathcal{I}^+$ defined by
\begin{align}\label{I+z}
    \mathcal{I}^+(z):=\frac{1}{4\pi L}\sum\limits_{\hx\in\Theta_L}\mathcal{I}^+_{\hx}(z),\ z\in\mathcal{G}.
\end{align}
where
\begin{align}\label{I_S_hx_z}
 \mathcal{I}^+_{\hat{x}}(z):=\int^{k_{2\Lambda}}_{k_{1}}k\left[u^{\infty,\delta}(\hat{x},k)e^{ik\hx\cdot z}+u^{\infty,\delta}(-\hat{x},k)e^{-ik\hx\cdot z}\right]dk,\quad z\in\mathcal{G}.
\end{align}
Note that the difference between \eqref{I_S_hx_z} and \eqref{I_hx_z} is just the sign of two integrands.
We would also like to remind that, different to the indicator function $\mathcal{I}^-(z)$ defined in \eqref{I-z}, we do not take the modulus before the summation with respect to the observation directions $\hx\in\Theta_L$. With these modifications, we find that the indicator function $\mathcal{I}^+(z)$ given in \eqref{I+z} can be used to approach the source function $f$ if there are sufficiently many observation directions. Actually, if $L$ and $k_{2\Lambda}$ are very large and $k_1$ is very small, with the help of \eqref{u_inf_1}, we have
\begin{align*}
    \mathcal{I}^+(z)&=\frac{1}{4\pi L}\sum\limits_{\hx\in\Theta_L}\int^{k_{2\Lambda}}_{k_{1}}\left[ku^{\infty,\delta}(\hat{x},k)e^{ik\hx\cdot z}+ku^{\infty,\delta}(-\hat{x},k)e^{-ik\hx\cdot z}\right]dk\\
    &\approx\frac{1}{8\pi^2}\int_{\mathbb S^1}\int^{+\infty}_{0}\left[ku^{\infty}(\hat{x},k)e^{ik\hx\cdot z}+ku^{\infty}(-\hat{x},k)e^{-ik\hx\cdot z}\right]dkds_{\hx}\\
    &=\frac{1}{4\pi^2}\int_{\mathbb R^2}\int_{\mathbb R^2}f(y)e^{-i\xi\cdot y}dy\ e^{i\xi\cdot z}d\xi\\
     &=f(z),\quad \forall\ z\in\mathbb R^2\backslash\partial\Omega.
\end{align*}
%\begin{align*}
  %  \mathcal{I}^+(z)&=\frac{1}{4\pi L}\sum\limits_{\hx\in\Theta_L}\int^{k_{2\Lambda}}_{k_{1}}\left[ku^{\infty,\delta}(\hat{x},k)e^{ik\hx\cdot z}+ku^{\infty,\delta}(-\hat{x},k)e^{-ik\hx\cdot z}\right]dk\\
  %  &\approx\frac{1}{8\pi^2}\int_{\mathbb S^1}\int^{+\infty}_{0}\left[ku^{\infty}(\hat{x},k)e^{ik\hx\cdot z}+ku^{\infty}(-\hat{x},k)e^{-ik\hx\cdot z}\right]dkd\omega\\
  %  &=\frac{1}{8\pi^2}\int_{\mathbb S^1}\int^{+\infty}_{0}\int_{\mathbb R^2} \left[ ke^{-ik\hx\cdot (y-z)}f(y)+ke^{-ik\hx\cdot (z-y)}f(y)\right]dydkd\omega\\
  %  &=\frac{1}{8\pi^2}\int_{\mathbb R^2}f(y)\int_{\mathbb R^2} e^{-i\xi\cdot (y-z)}+ e^{-i\xi\cdot (z-y)}d\xi dy\\
  %  &=\int_{\mathbb R^2}f(y)\delta(z-y) dy\\
  %  &=f(z),\quad \forall\ z\in\mathbb R^2\backslash\partial\Omega.
%\end{align*}
In the last equality, we exclude the sampling points on $\pa\Om$ because we have assumed that the source function $f$ has a jump across $\pa\Om$. Despite the above analysis requires a lot of observation directions, the subsequent numerical examples show that, even if we have only finitely many observation directions, the indicator function $\mathcal{I}^+(z)$ performs very well for reconstructing the source support $\Om$ and approaching the source function $f$.

%According to the above analysis, the second indicator function $\mathcal{I}^+$ should work with far field data for all directional $\hx\in\mathbb S^1$ and all frequencies $k\in (0,+\infty)$. However, this method has excellent performance on the data with sparse observation directions. 

\subsection{Three simple examples}
In this subsection, we consider three simple examples to verify the effectiveness and robustness of our indicator functions proposed in the previous subsection. We set $f=\chi_{\Om}$, where $\chi_{\Om}$ is the characteristic function and $\Om$ is defined by one of the following domains:
\begin{itemize}
    \item an annulus as shown  on the top left of Figure \ref{annulus-L_L_1};
    \item a L-shaped polygon as shown  on the bottom left of Figure \ref{annulus-L_L_1};
    \item a kite shaped domain $(x+\frac{13y^2}{15})^2+y^2\leq 1$.
\end{itemize}
%The first two examples are an annulus and a L-shaped polygon, respectively. We set $f=1$ in the two domains. The true source function can be found on the left of Figure \ref{annulus-L_L_1}. The third example is a kite shaped source $f(x,y)$ given by
%\begin{align*}
%    f(x,y)=\left\{
%    \begin{array}{cc}
%        1, &  \ {\rm if}\ (x+\frac{13y^2}{15})^2+y^2\leq 1, \\
 %       0, & \ {\rm if}\ (x+\frac{13y^2}{15})^2+y^2> 1, 
%    \end{array}\right.\ \forall\  (x,y)\in\mathbb R^2.
%\end{align*}

Figure \ref{annulus-L_L_1} shows the reconstructions of the first two examples with $L=1$. The observation direction is $\hx = (\cos 0.7\pi, -\sin 0.7\pi)$. Clearly, both the indicator functions $\mathcal{I}^-$ and $\wi{\mathcal{I}}^-$ capture the tangent lines of the inner and outer circles of the annulus. Moreover, the processed indicator function  $\wi{\mathcal{I}}^-$ also accurately capture the lines passing through the corners. These lines are perpendicular to the observation direction.
   \begin{figure}[htbp]
    \centering
    \includegraphics[width=1\textwidth]{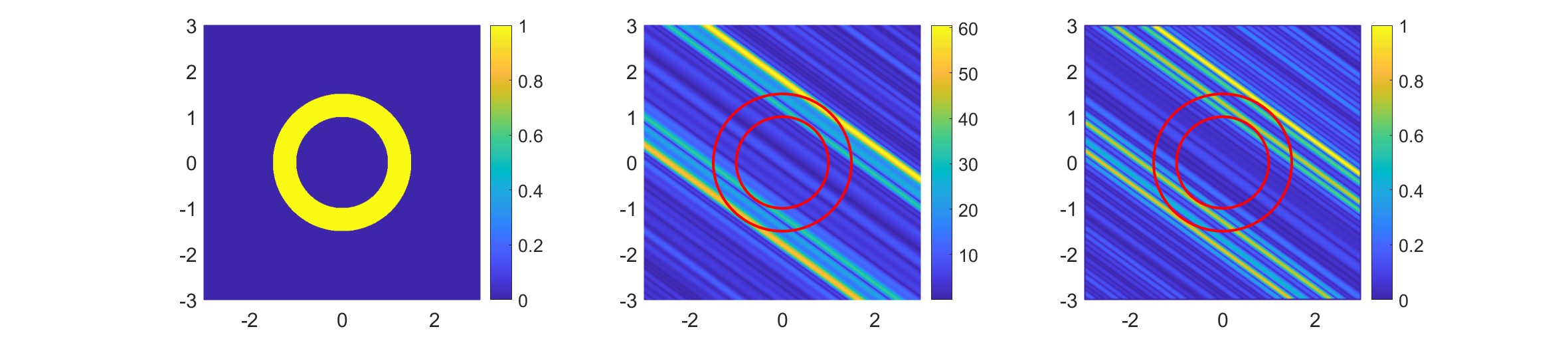}\\
    \includegraphics[width=1\textwidth]{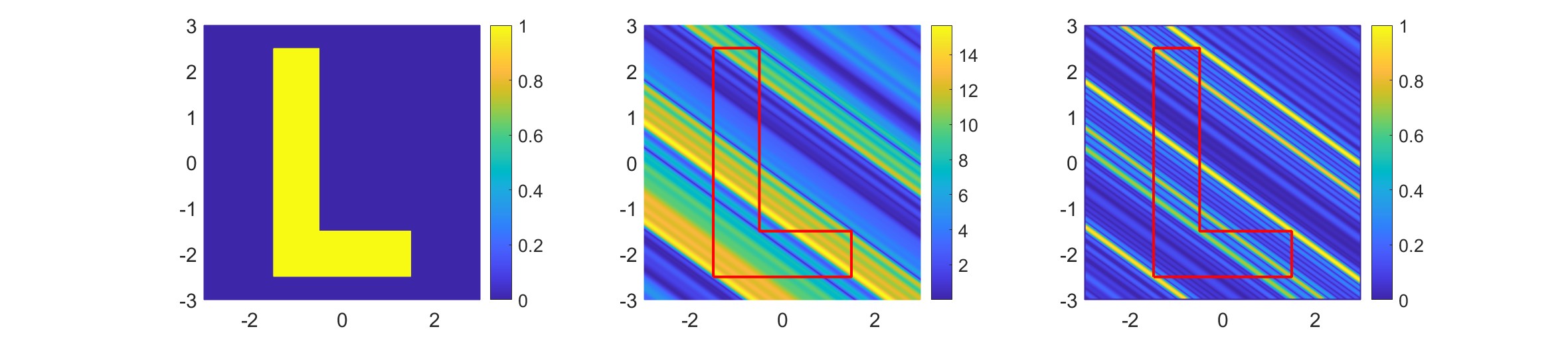}
    \caption{\em Reconstructions of the source support with $L=1,\Lambda=30$. Left: The true source functions. Middle: reconstructions by plotting $\mathcal{I}^-$, here $\partial\Omega$ is plotted by red curves. Right: reconstructions by plotting $\wi{\mathcal{I}}^{-}$, here  $\partial\Omega$ is plotted by red lines.}
    \label{annulus-L_L_1}
\end{figure}

Furthermore, Figure \ref{annulus-L_L_25} shows the reconstructions with $L=25$. Both the inner and outer circles of the annulus $\Omega$ are well reconstructed. As mentioned in \eqref{discon-ann-p}-\eqref{discon-ann-m}, the indicator function $\mathcal{I}^{-}$ takes larger value on the outer circle while smaller value on the inner circle.
All the edges of the polygon $\Omega$ have been reconstructed by indicator function $\mathcal{I}^-$. This phenomenon is reasonable since Theorem \ref{uni-pol-edge} implies that the value of $|\mathcal{I}^-_{\hx}|$ should be large on an edge when the observation direction $\hx$ is nearly perpendicular to this edge. Moreover,  the corners are reliably reconstructed by $\wi{\mathcal{I}}^{-}$, which is designed to emphasize line $l_{\hx,s}$ passing through the corners at the cost of ignoring edges' information. 
Based on these observations, to reconstruct the whole boundary $\pa\Om$, we will not show the reconstruction results of $\wi{\mathcal{I}}^-$ in the subsequent examples.
\begin{figure}[htbp]
    \centering
    \includegraphics[width=1\textwidth]{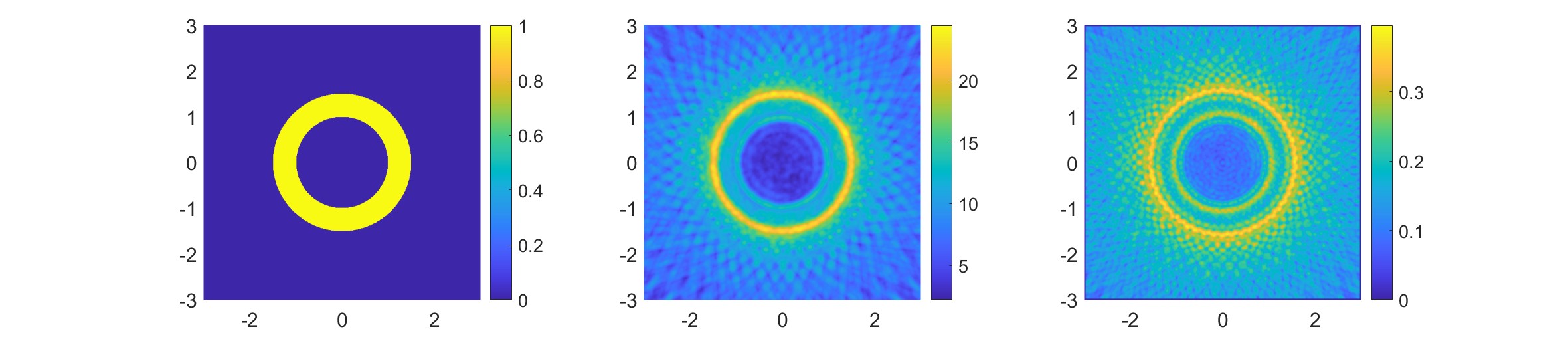}\\
    \includegraphics[width=1\textwidth]{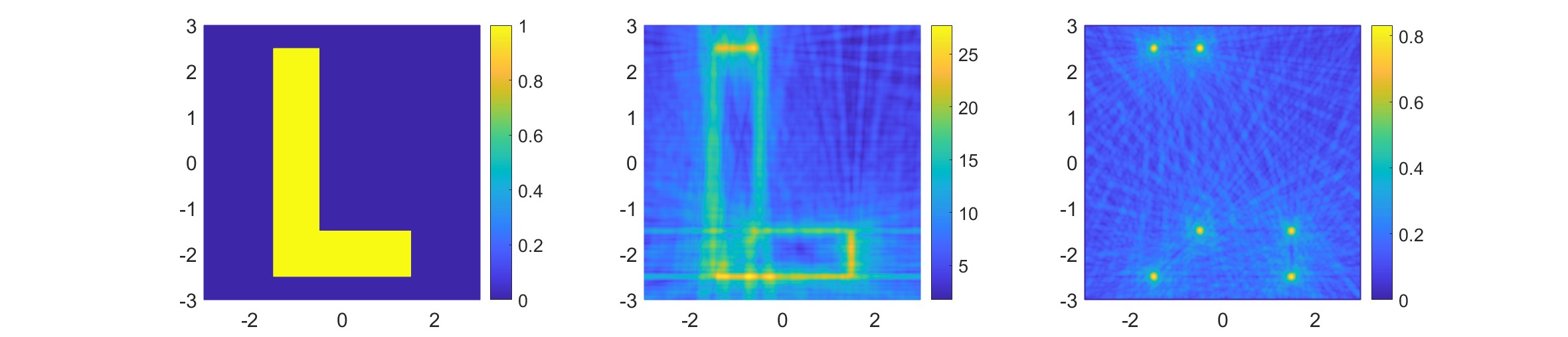}
    \caption{\em Reconstructions of the source support with $L=25,\Lambda=30$. Left: The true source functions. Middle: reconstructions by plotting $\mathcal{I}^-$. Right: reconstructions by plotting $\wi{\mathcal{I}}^{-}$.}
    \label{annulus-L_L_25}
\end{figure}

We take the kite shaped source as a benchmark example to compare the behaviours of the two indicator functions $\mathcal{I}^{\pm}$. Figure \ref{kite} shows the reconstructions with $L=9,15,21$. Obviously, both indicator functions can be used to give a quite good reconstruction of the kite shaped domain with the far field patterns at $9$ observation directions. The quality of the reconstruction of $\pa\Om$ (or $\Om$) can be improved greatly with the increase of the number $L$. As also shown in Figure \ref{annulus-L_L_25}, the indicator function $\mathcal{I}^{-}$ is good at determining the boundary $\pa\Om$.
Using the indicator function $\mathcal{I}^{+}$, we can not only reconstruct the source support $\Om$ but also approach the source function $f$ if the number $L$ is large enough.
%For indicator function $\mathcal{I}^-$, $\partial \Omega$ can be gradually reconstructed by $\mathcal{I}^-$ with the increase of $L$. Note that $\mathcal{I}^-_{\hx}$ can catch at least two tangent lines of general $C^1$ smooth boundary of the support $\Omega$ for every $\hx$. Since the local maximum or minimum points of $d(y):=\hx\cdot y,\ y\in\partial\Omega$ is always belongs to $Y_{\hx}(\partial\Omega)$. Surprisingly, for indicator function $\mathcal{I}^+$, one can be assured that the kite shape can be captured by such sparse observation directions ($L=9$). Moreover, the kite shape become more clearly with more observation directions. In addition, we have observed that the function $\mathcal{I}^+$ goes to the characteristic function of $\Omega$ as $L$ increases, corroborating our previous analysis.\\
\begin{figure}[htbp]
    \centering
    \begin{tabular}{ccc}
        \subfigure[$L=9$.]{
            \label{kite_L_9-I}
            \includegraphics[width=0.3\textwidth]{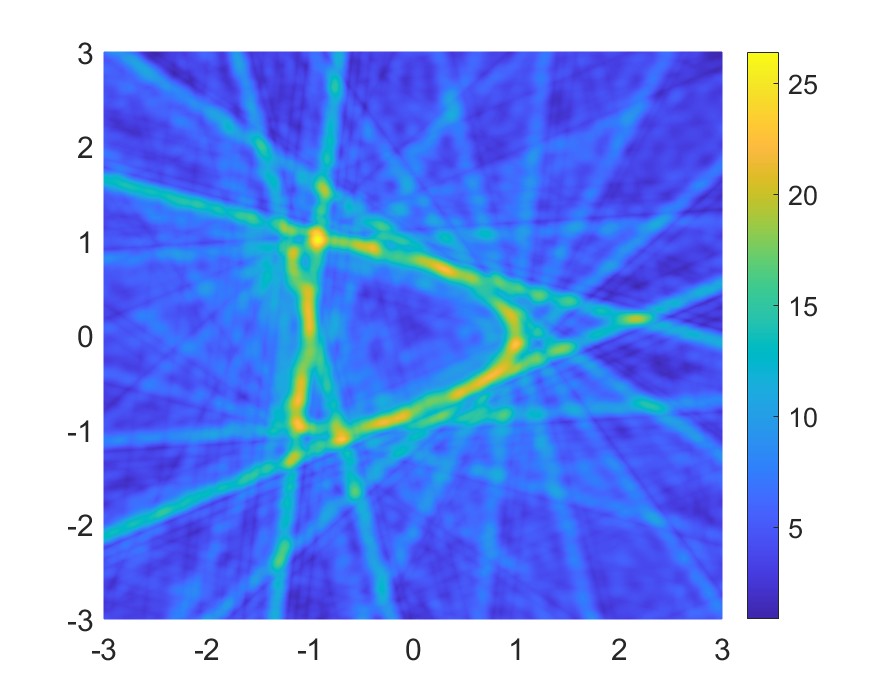}
        } &
        \subfigure[ $L=15$]{
            \label{kite_L_15-I}
            \includegraphics[width=0.3\textwidth]{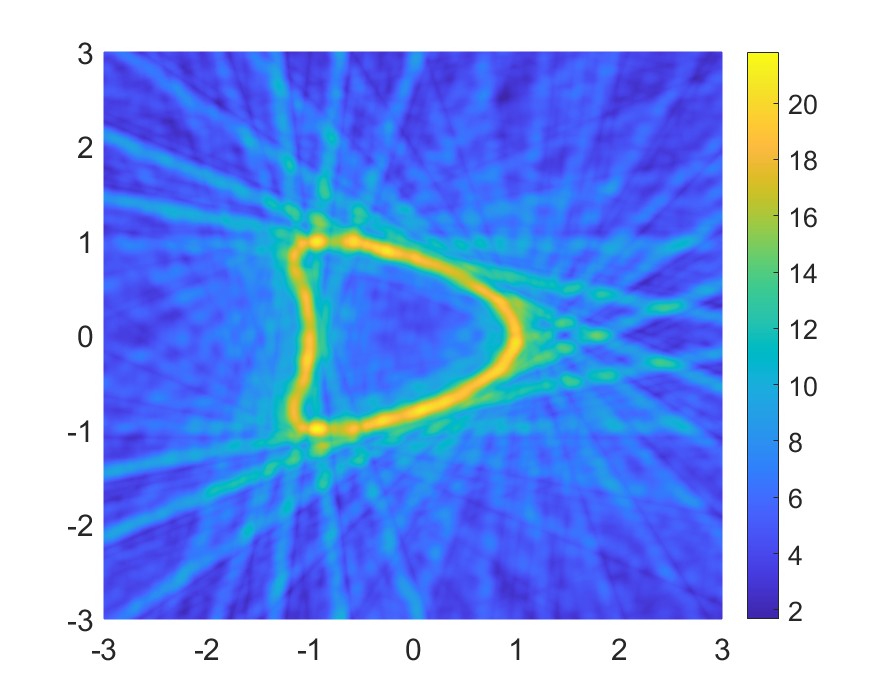}
        }&
         \subfigure[ $L=21$]{
            \label{kite_L_31-I}
            \includegraphics[width=0.3\textwidth]{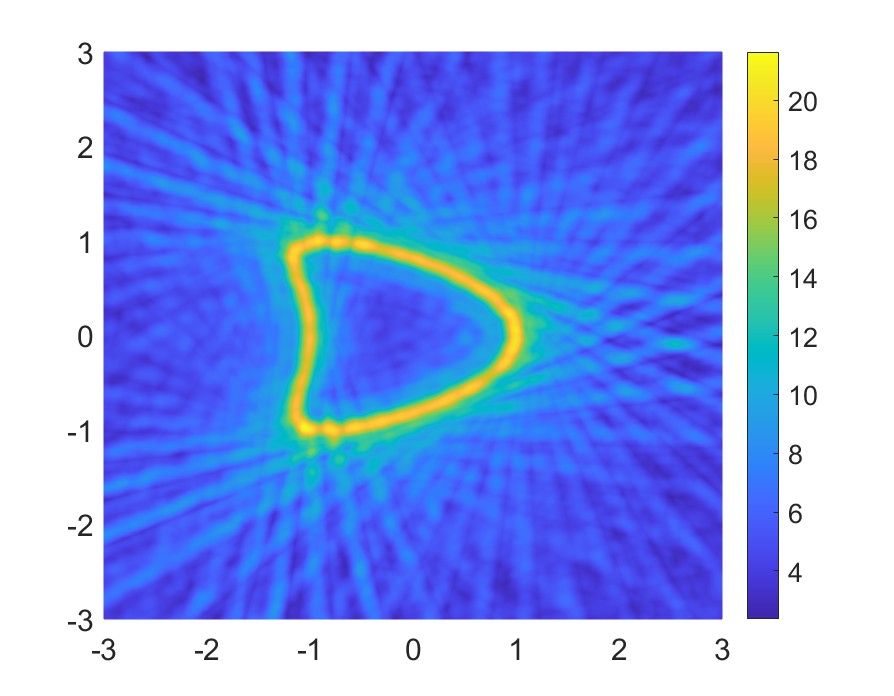}
            } \\
        \subfigure[$L=9$.]{
            \label{kite_L_9-PI}
            \includegraphics[width=0.3\textwidth]{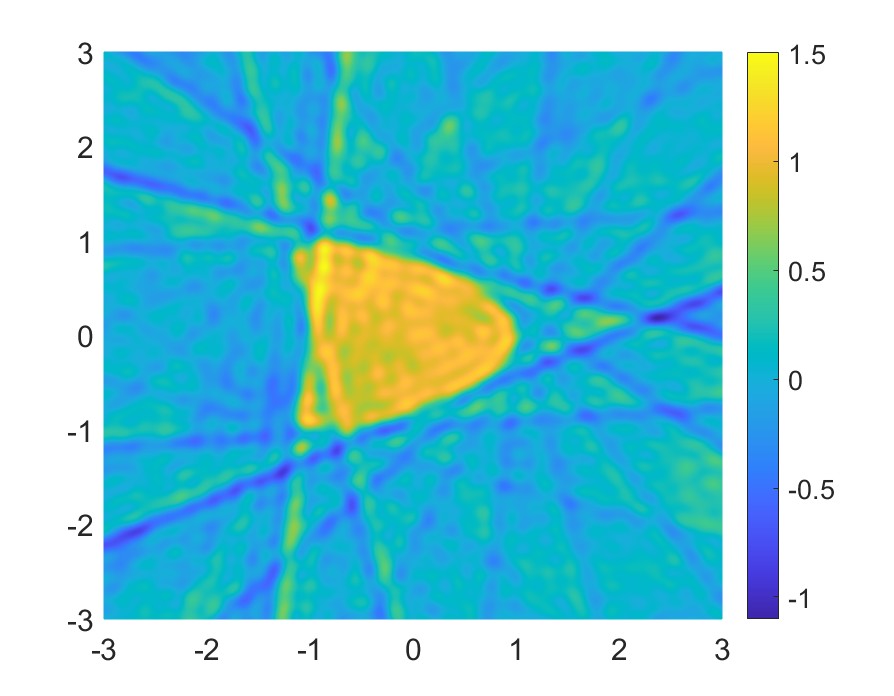}
        } &
        \subfigure[ $L=15$]{
            \label{kite_L_15-PI}
            \includegraphics[width=0.3\textwidth]{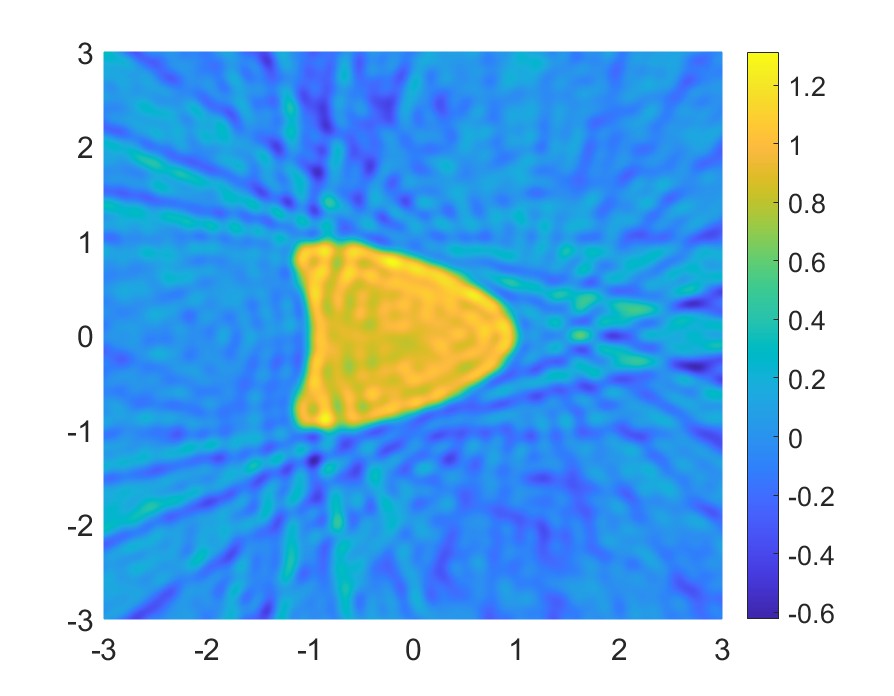}
        }&
         \subfigure[ $L=21$]{
            \label{kite_L_31-PI}
            \includegraphics[width=0.3\textwidth]{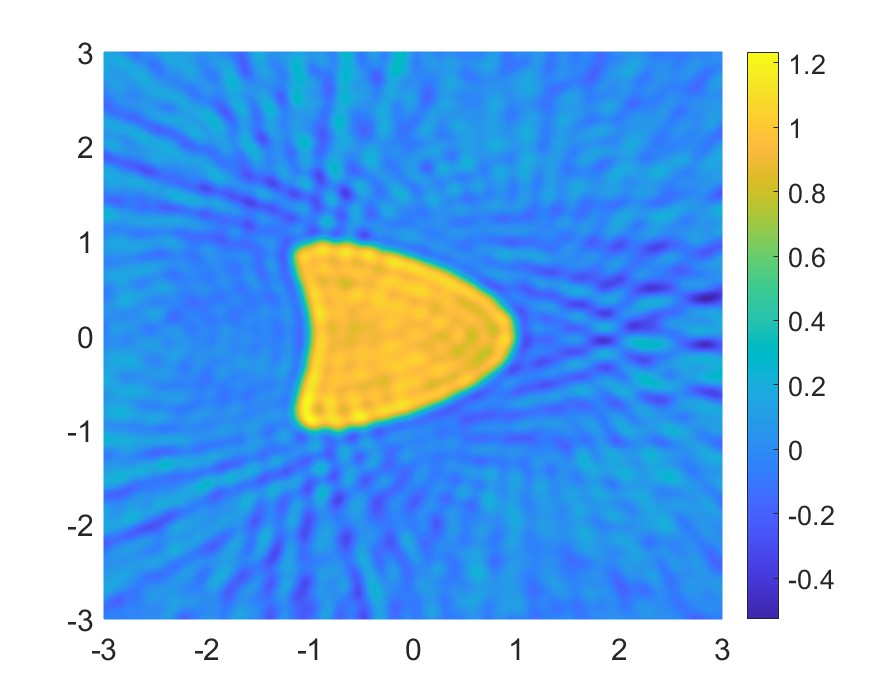}
        }
    \end{tabular}
    \caption{Reconstructions of a kite structured source with $\Lambda=30$. Top row: results by plotting $\mathcal{I}^-$. Bottom row: results by plotting  $\mathcal{I}^+$.}
    \label{kite}
\end{figure}

%%%%%%%%%%%%%%%%%%%%%%%%%%%%%%%%%%%%%%%%%%%%%%%%%%%%%%%%%%%%%%%%%%%%%%%%%%%%%%%%%%%%%%%%%%%%%%%%%%%%
%%%%%%%%%%%%%%%%%%%%%%%%%%%%%%%%%%%%%%%%%%%%%%%%%%%%%%%%%%%%%%%%%%%%%%%%%%%%%%%%%%%%%%%%%%%%%%%%%%%%
\subsection{Numerical experiments with various parameters}
This subsection is devoted to show the different effects of our method by varying $f,\Theta_L, \Lambda$ and noise. The source support $\Om$ is composed of four L-shaped polygons and an annulus. The true $\Om$ is shown in Figure \ref{source-mixed}. Unless otherwise stated, we will always take $f=\chi_{\Om}, L=15$, and $\Lambda=20$ with $\Omega=\Omega_{(1)}\cup\left(\Omega_{(2)}\cap\Omega_{(3)}^c\right)$, where
\begin{align*}
    \Omega_{(1)}&:=\{(y_1,y_2)\in\mathbb R^2|(y_1+1)^2+(y_2+1)^2\leq1\},\\
    \Omega_{(2)}&:=\{(y_1,y_2)\in\mathbb R^2|0\leq y_1\leq 1.5, 0\leq y_2\leq 1.5\},\\
    \Omega_{(3)}&:=\{(y_1,y_2)\in\mathbb R^2|0\leq y_1\leq 0.75, 0\leq y_2\leq 0.75\}.
\end{align*}
Figure \ref{source-mixed} shows the true shape of $\Omega$. 

The first direct sampling method for sources is proposed in \cite{spar-2020}, where the indicator function is defined as follows,
\be\label{I-alhs}
I_{ALHS}(z):=\sum\limits_{\hx\in\Theta_L}\left |\int_{k_1}^{k_{2\Lambda}}u^{\infty}(\hx, k)e^{ik\hx\cdot z}dk\right |, \quad z\in \R^n.
\en
%This has also been extended to inverse electromagnetic and elastic source scattering problems \cite{spar-2023LiLiu,spar-2023}. 
For comparisons, we present the reconstruction of $f=\chi_{\Om}$ using $I_{ALHS}(z)$ in Figure \ref{old-inidcator}. Obviously, the shape and location of the source support $\Omega$ are roughly reconstructed with a low resolution.
We observe from Figures \ref{lambda-30-I} and \ref{lambda-30-PI}, the resolution is much improved by using the novel indicators  $\mathcal{I}^-$ and $\mathcal{I}^+$.  In particular, the indicator $\mathcal{I}^+$ gives a good reconstruction of the source function. 
%The effectiveness of our novel methods are demonstrated by comparing Figure \ref{old-inidcator} with the subsequent Figures \ref{}.
\begin{figure}[htbp]
    \centering
    \subfigure[The support $\Omega$]{
            \label{source-mixed}
            \includegraphics[width=0.3\textwidth]{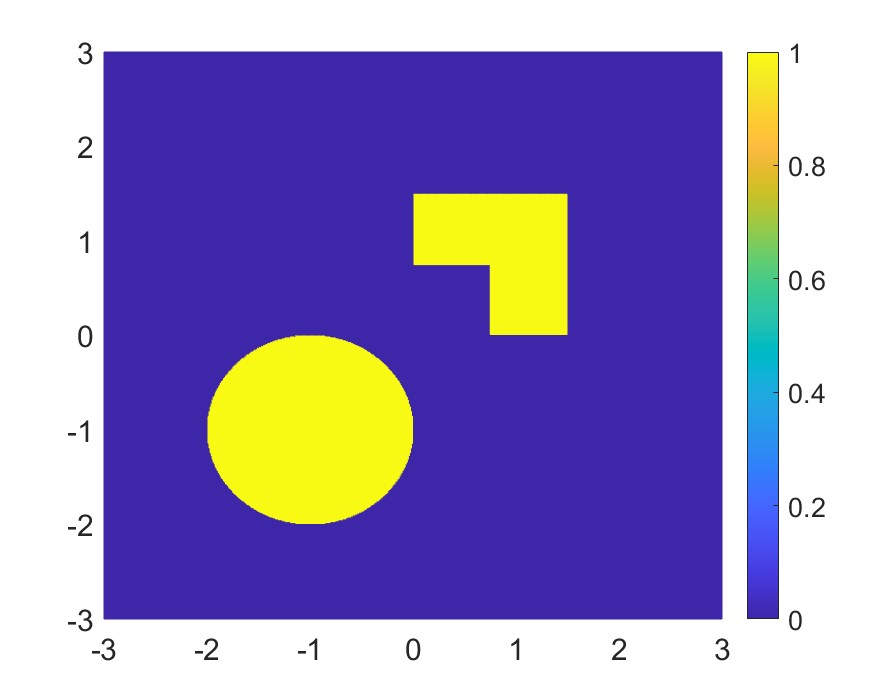}
        }
    \subfigure[ The reconstruction using the indicator \eqref{I-alhs} proposed in \cite{spar-2020}.]{
            \label{old-inidcator}
            \includegraphics[width=0.3\textwidth]{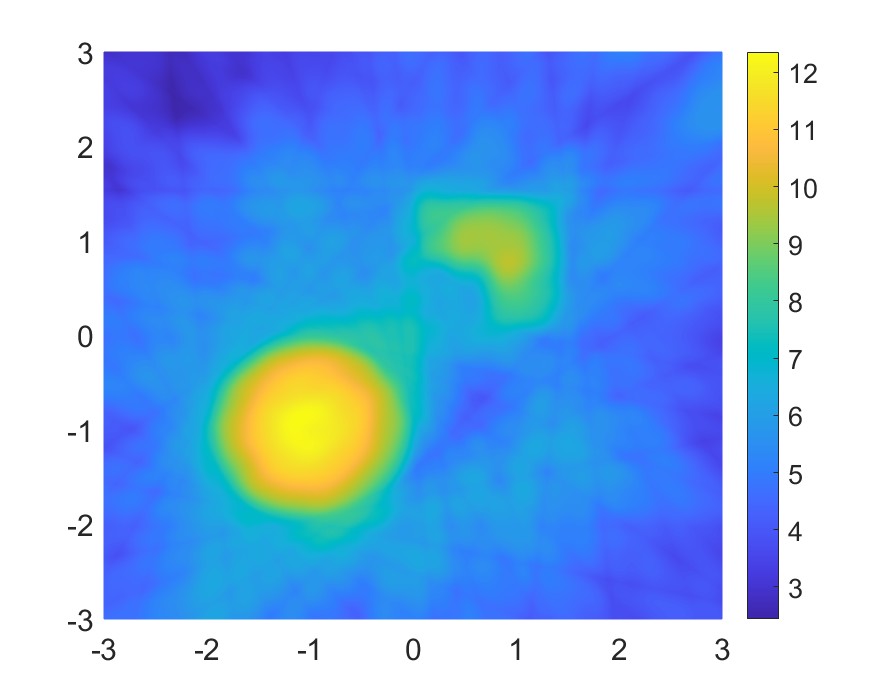}}
 \caption{Left: True source support $\Omega$ in this subsection. Right: Reconstruction by plotting $I_{ALHS}$ with $L=15,\Lambda=20$.}    \end{figure}

\begin{itemize}
    \item \textbf{Examples with different $\Lambda$.}
\end{itemize}
Figure \ref{lambda} displays the reconstructions for various $\Lambda$. Surprisingly, for such a complex structured support $\Om$, both the indicator
functions $\mathcal{I}^{\pm}$ give very satisfactory reconstructions with $\Lambda=20$. 
The quality of the reconstruction of $\Om$ can be improved by increasing $\Lambda$. We also observe that the second indicator function $\mathcal{I}^{+}$ performs better than the first indicator function $\mathcal{I}^{-}$. This fact will be verified in all the subsequent numerical examples.

%Such an observation is reasonable due to two facts: (1)the theoretical basis of the indicator functions require a large $\Lambda$ and (2)the high frequency results in an internal oscillation and thus very small by the Riemann-Lebesgue Lemma.
%The larger $\Lambda$, the better the resolution of both two indicator functions. This is also consistent with our previous analysis because $\mathcal{I}_{\hx}^{\pm}$ theoretically require far field data for all frequencies $k\in(0,+\infty)$. On the other hand, $\Lambda=30$ is enough to make a reliable reconstruction, very high-frequency far field data is highly oscillating, fortunately these data is not necessary.
\begin{figure}[htbp]
   \centering
    \begin{tabular}{ccc}
        \subfigure[$\Lambda=20$.]{
            \label{lambda-30-I}
        \includegraphics[width=0.3\textwidth]{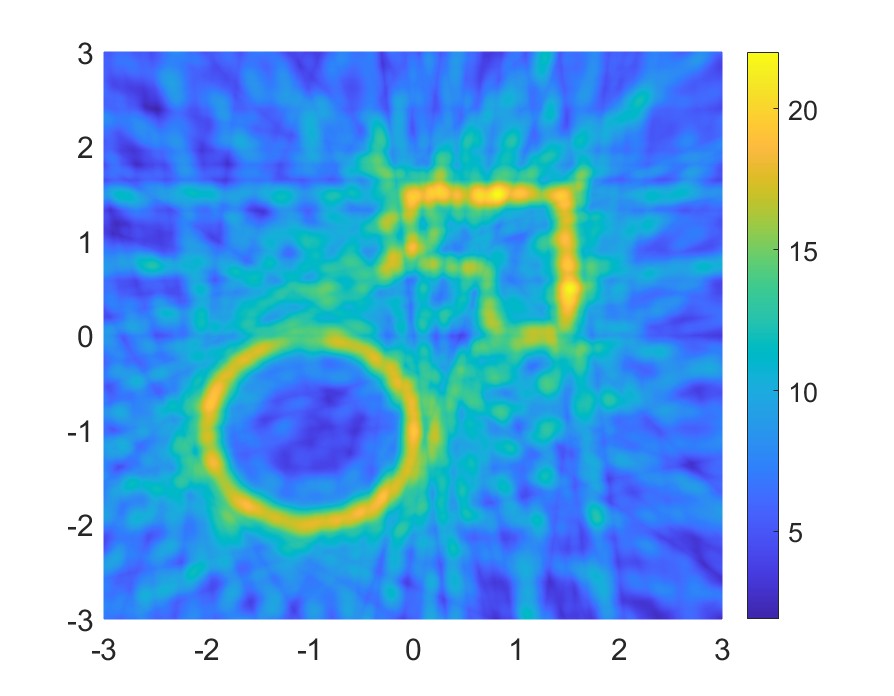}
        }&
        \subfigure[ $\Lambda=30$.]{
            \label{lambda-40-I}
            \includegraphics[width=0.3\textwidth]{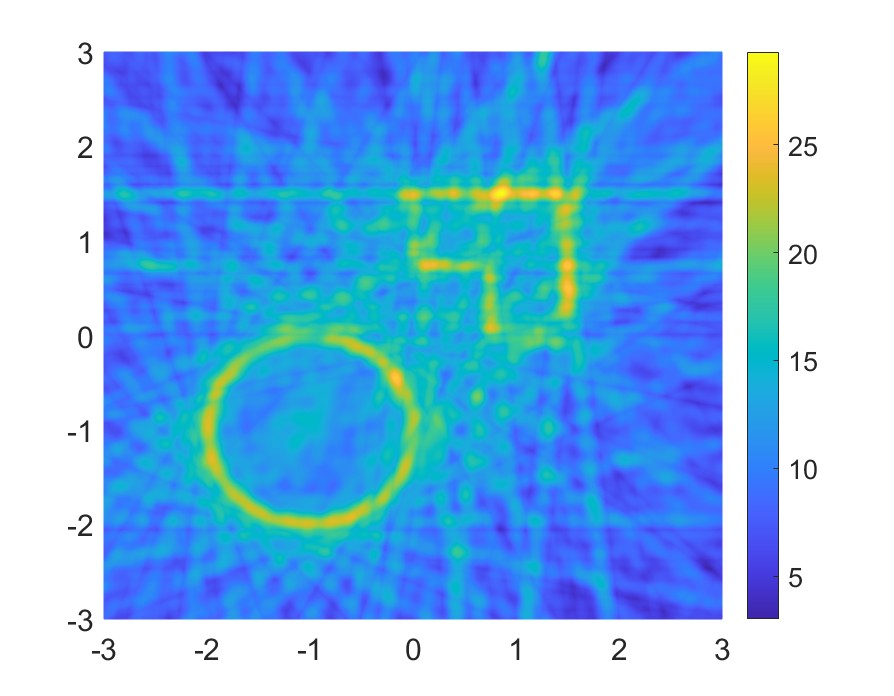}
        }&
         \subfigure[$\Lambda=40$.]{
            \label{lambda-50-I}
            \includegraphics[width=0.3\textwidth]{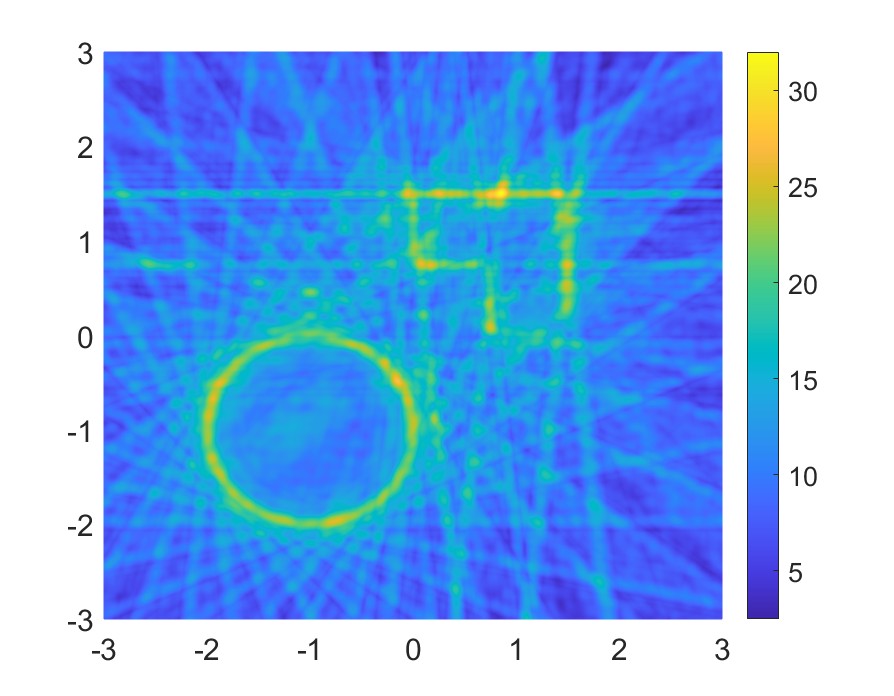}
        }\\
        \subfigure[$\Lambda=20$.]{
            \label{lambda-30-PI}
            \includegraphics[width=0.3\textwidth]{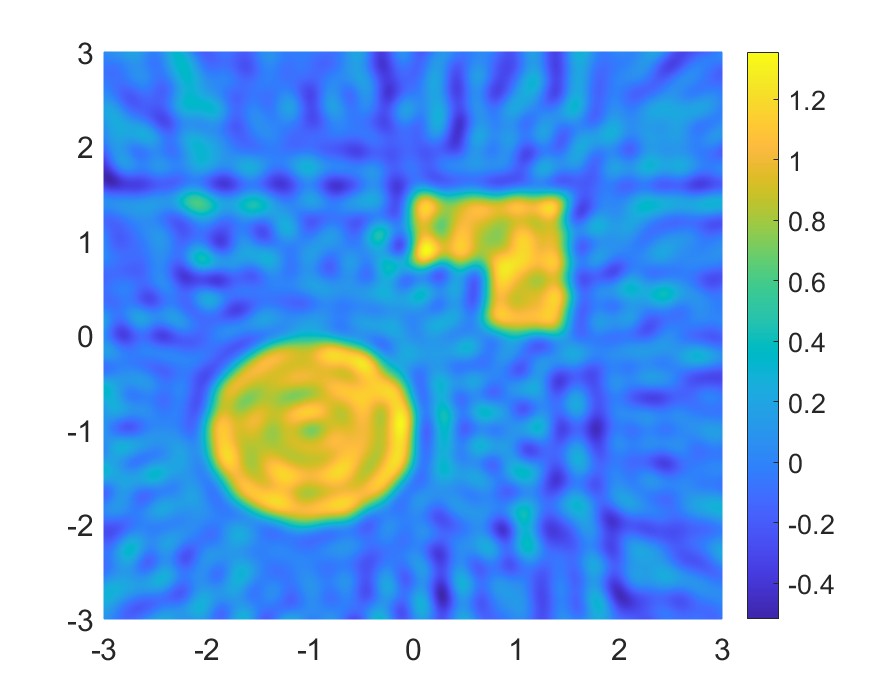}
        }&
        \subfigure[ $\Lambda=30$.]{
            \label{lambda-40-PI}
            \includegraphics[width=0.3\textwidth]{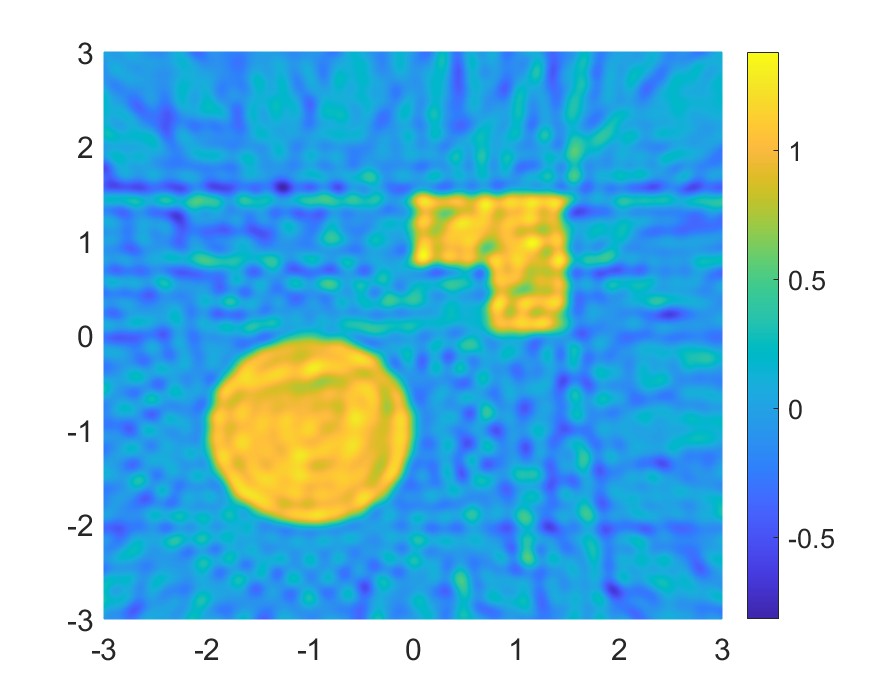}
        }&
         \subfigure[ $\Lambda=40$.]{
            \label{lambda-50-PI}
            \includegraphics[width=0.3\textwidth]{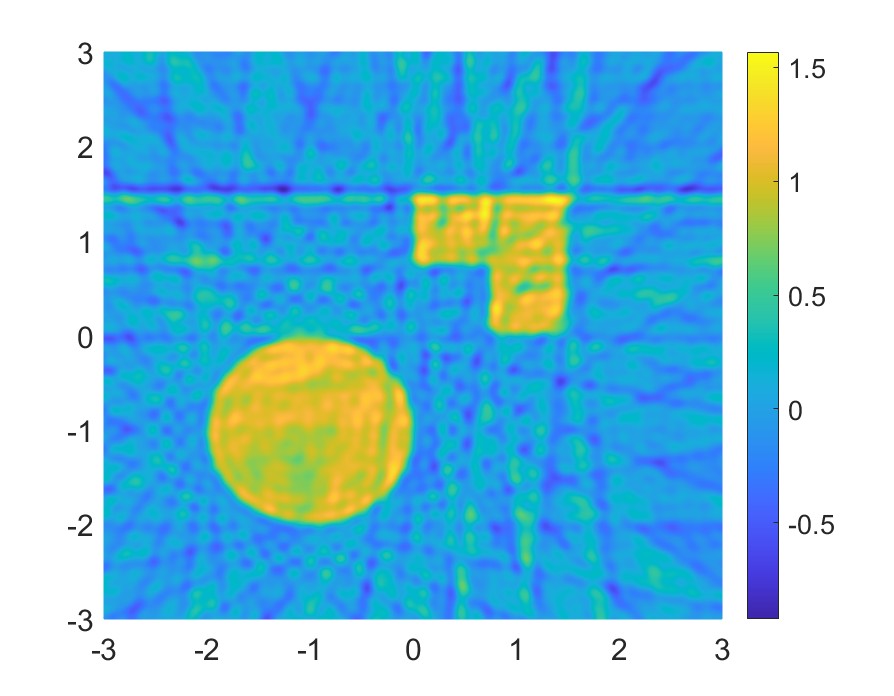}
        }
    \end{tabular}
    \caption{Reconstructions for various $\Lambda$. Top row: reconstructions by plotting $\mathcal{I}^-$. Bottom row: reconstructions by plotting $\mathcal{I}^+$.}
    \label{lambda}
\end{figure}

\begin{itemize}
    \item \textbf{Examples with different $\Theta_L$.}
\end{itemize}
Figure \ref{ThetaL} indicates that the fidelity of the reconstruction improves as $L$ increases.  Note that $L=31$ is the minimum value required in \eqref{Lin3.4} for uniquely determine all the corners of $\pa\Omega$ and the annulus. Numerically, we have already obtained quite a good reconstruction of $\pa\Om$ with $L=15$. The same as shown in Figure \ref{kite} for the kite shaped domain, the quality of the reconstruction of $\Om$ can be improved with the increase of $L$.
\begin{figure}[htbp]
\centering
    \begin{tabular}{ccc}
        \subfigure[$L=15$.]{
            \label{ThetaL-31-I}
            \includegraphics[width=0.3\textwidth]{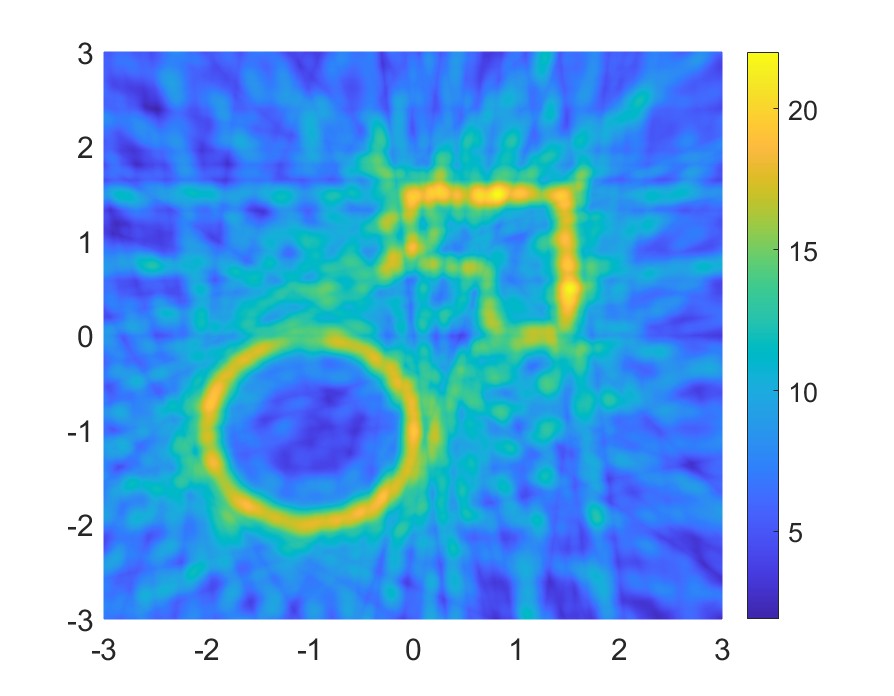}
        }&
        \subfigure[$L=23$.]{
            \label{ThetaL-41-I}
            \includegraphics[width=0.3\textwidth]{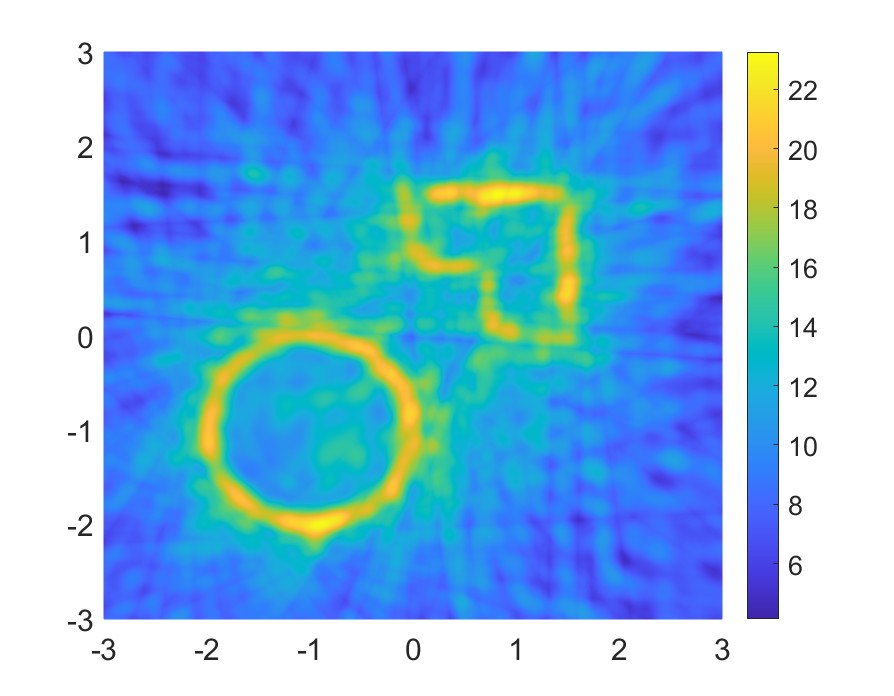}
        }&
         \subfigure[$L=31$.]{
            \label{Theta-51-I}
            \includegraphics[width=0.3\textwidth]{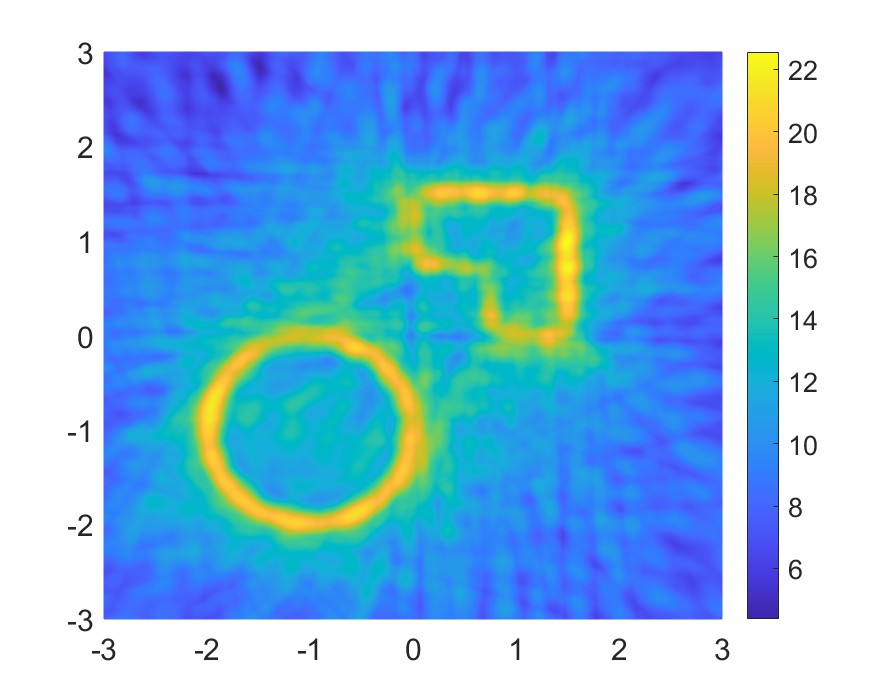}
        }\\
                \subfigure[$L=15$.]{
            \label{ThetaL-31-PI}
            \includegraphics[width=0.3\textwidth]{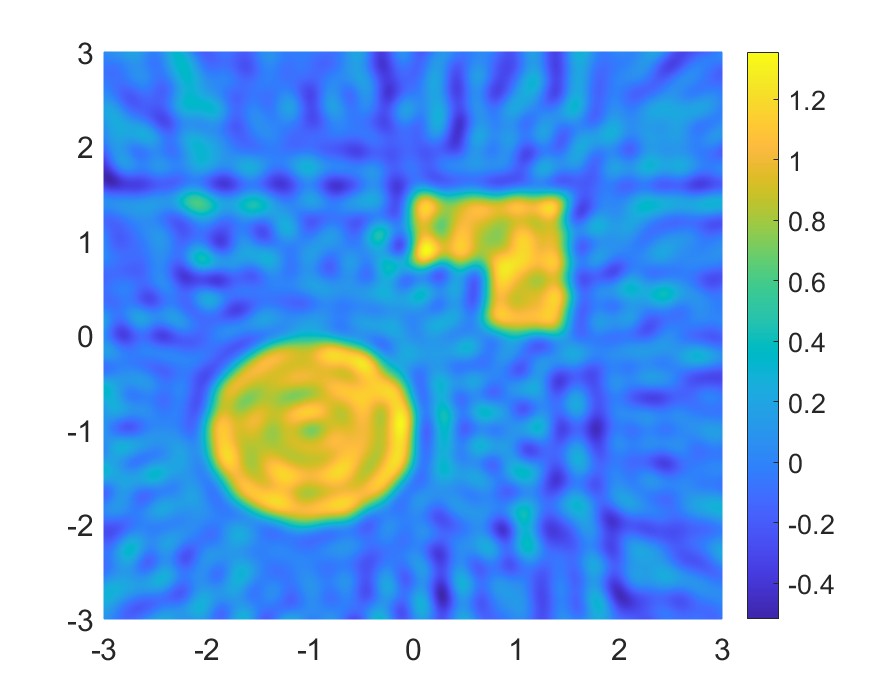}
        }&
        \subfigure[$L=23$.]{
            \label{ThetaL-41-PI}
            \includegraphics[width=0.3\textwidth]{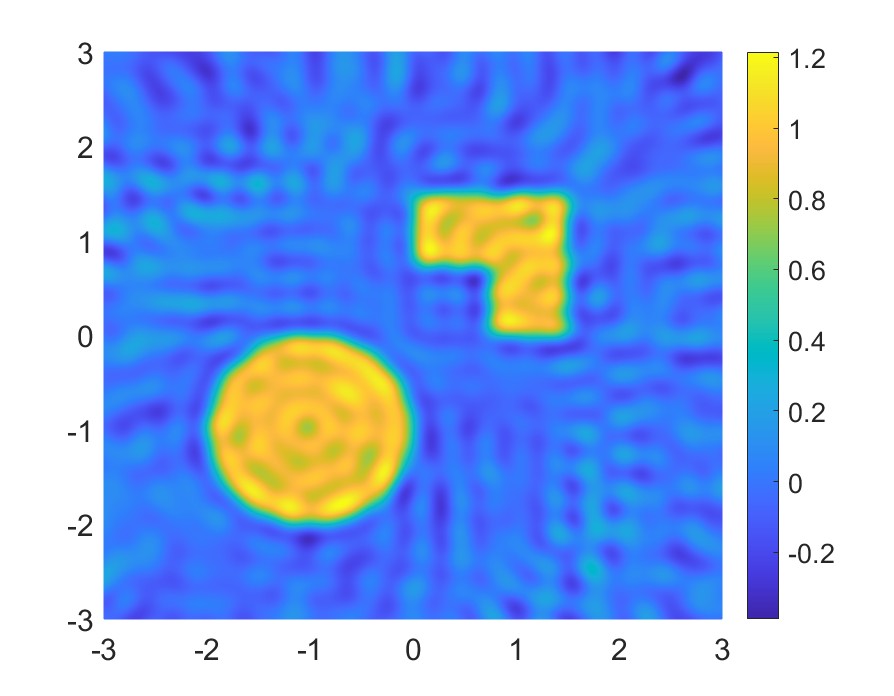}
        }&
         \subfigure[$L=31$.]{
            \label{Theta-51-PI}
            \includegraphics[width=0.3\textwidth]{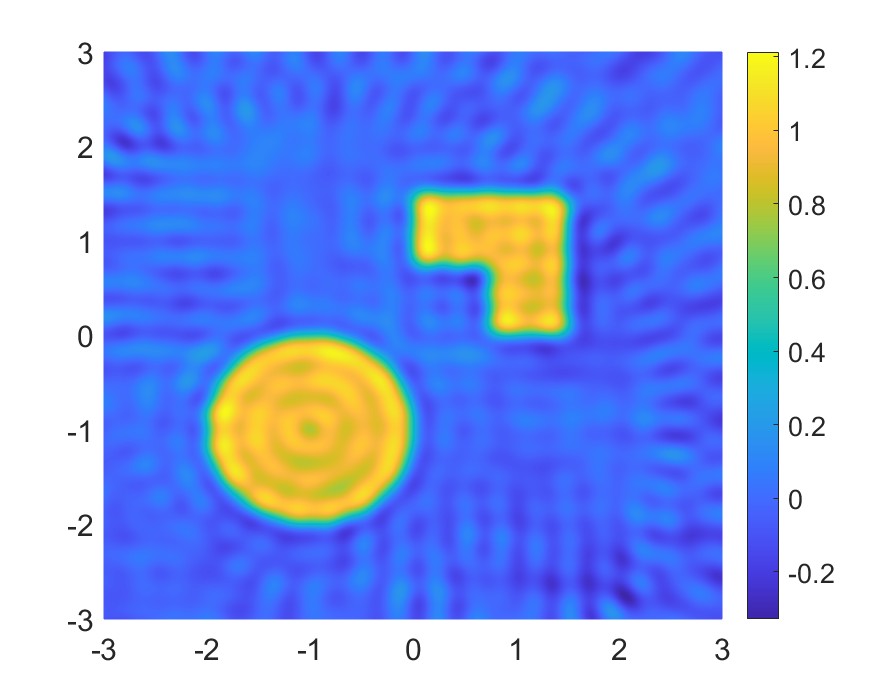}
        }
    \end{tabular}
    \caption{Reconstructions for various $L$. Top row: reconstructions by plotting $\mathcal{I}^-$. Bottom row: reconstructions by plotting $\mathcal{I}^+$.}
    \label{ThetaL}
\end{figure}

For a fixed $L$, we also consider the limited aperture case with the observation direction set
\ben
\Theta_L^\gamma:=\left\{\Big(\cos\frac{(\gamma l-0.7L) \pi}{L}, \sin\frac{(\gamma l-0.7L) \pi}{L}\Big)\in\mathbb{S}^1\,|\,0\leq l\leq L-1\right\},\quad 0<\gamma\leq 2.
\enn
In the full aperture case,  $\gamma=2$. The aperture of $\Theta_L^\gamma$ decreases as $\gamma>0$ decreases. 
Figure \ref{limited-aperture} presents the results with $\gamma=0.7, 0.5, 0.3$. 
Numerically, the limited aperture data indeed present a severe challenge for the constructions. The annulus is elongated along the horizontal direction. In particular, the elongation increases as $\gamma$ decreases. This is reasonable because the observation directions focus on the vertical direction as $\gamma$ decreases. We also observe that all the horizontal edges are well reconstructed even the aperture is very limited ($\gamma=0.3$).
\begin{figure}[htbp]
\centering
    \begin{tabular}{ccc}
        \subfigure[$\gamma=0.7$.]{
            \label{07pi-I}
            \includegraphics[width=0.3\textwidth]{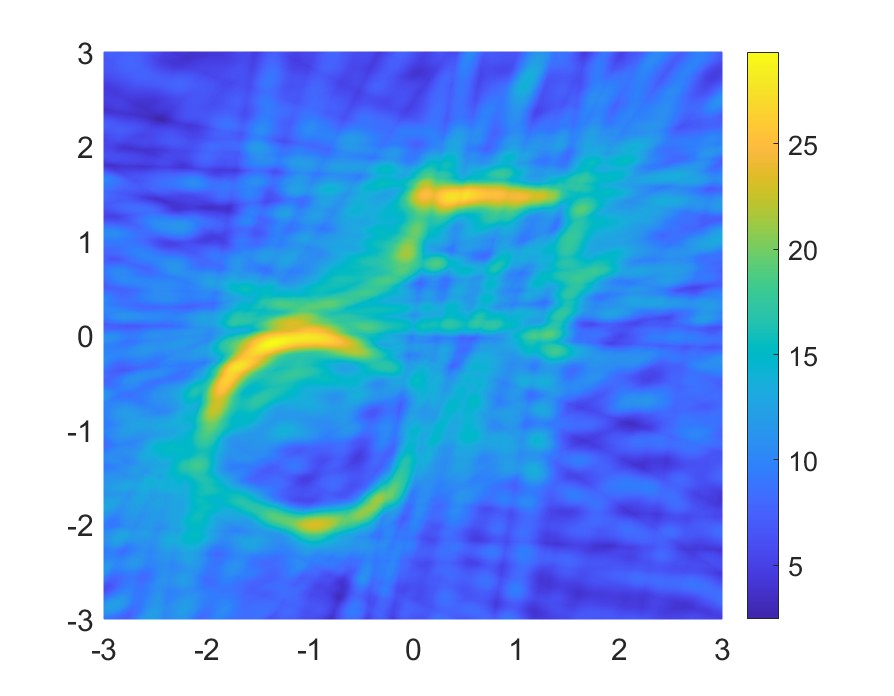}
        }&
        \subfigure[$\gamma=0.5$.]{
            \label{05pi-I}
            \includegraphics[width=0.3\textwidth]{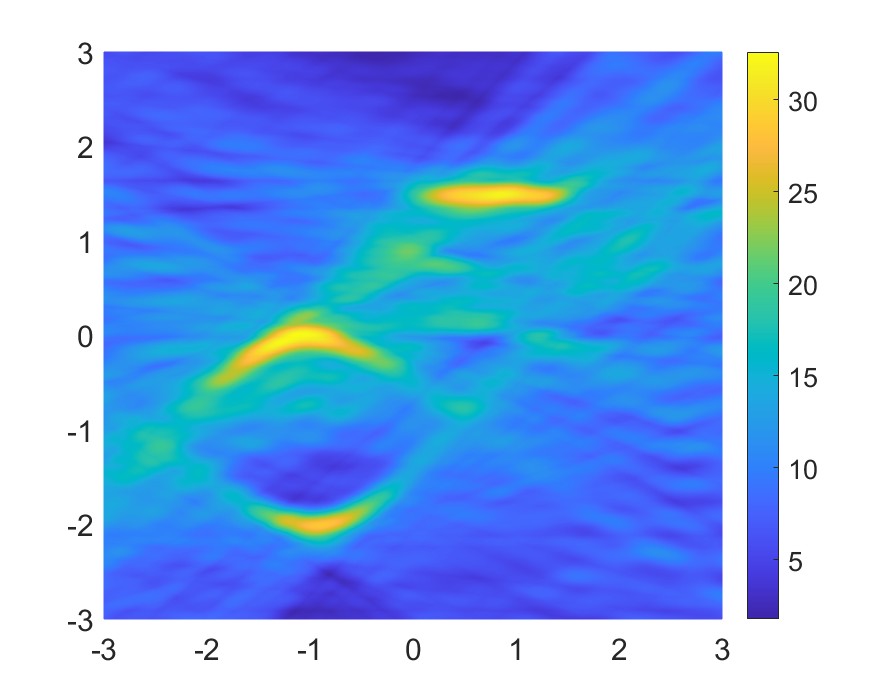}
        }&
         \subfigure[$\gamma=0.3$.]{
            \label{03pi-I}
            \includegraphics[width=0.3\textwidth]{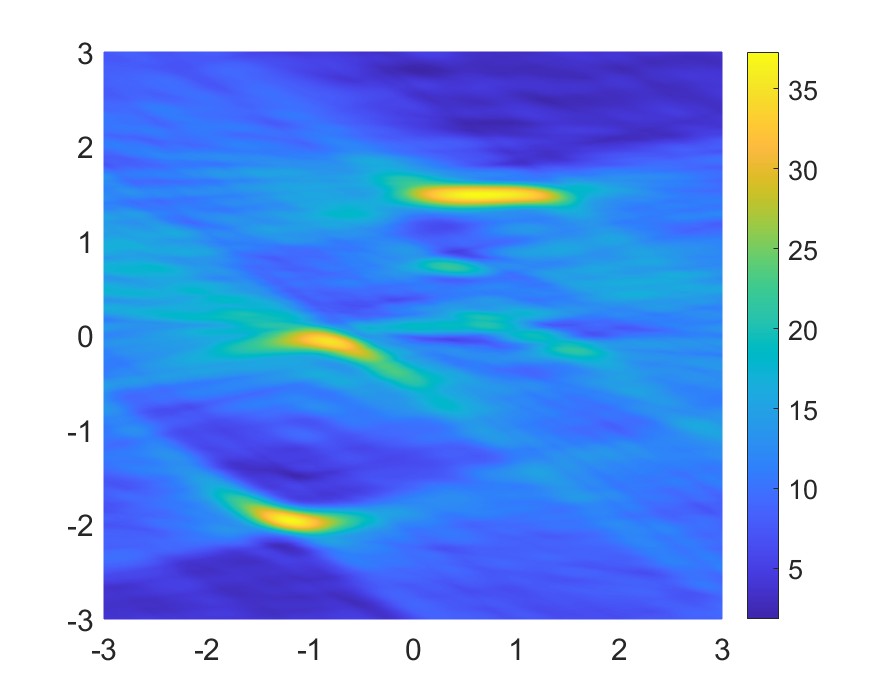}
        }\\
          \subfigure[$\gamma=0.7$.]{
            \label{07pi-PI}
            \includegraphics[width=0.3\textwidth]{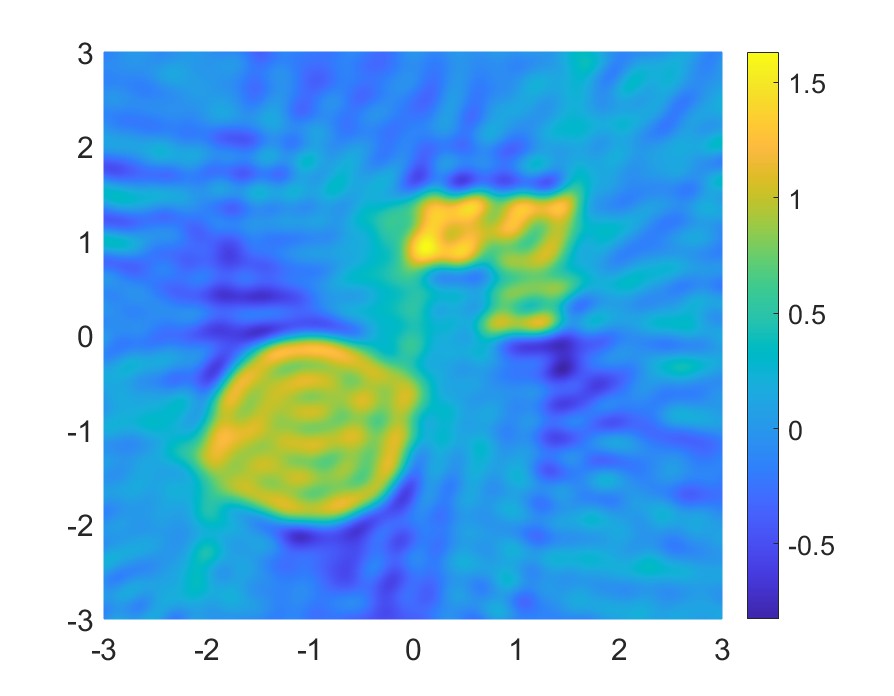}
        }&
        \subfigure[$\gamma=0.5$.]{
            \label{05pi-PI}
            \includegraphics[width=0.3\textwidth]{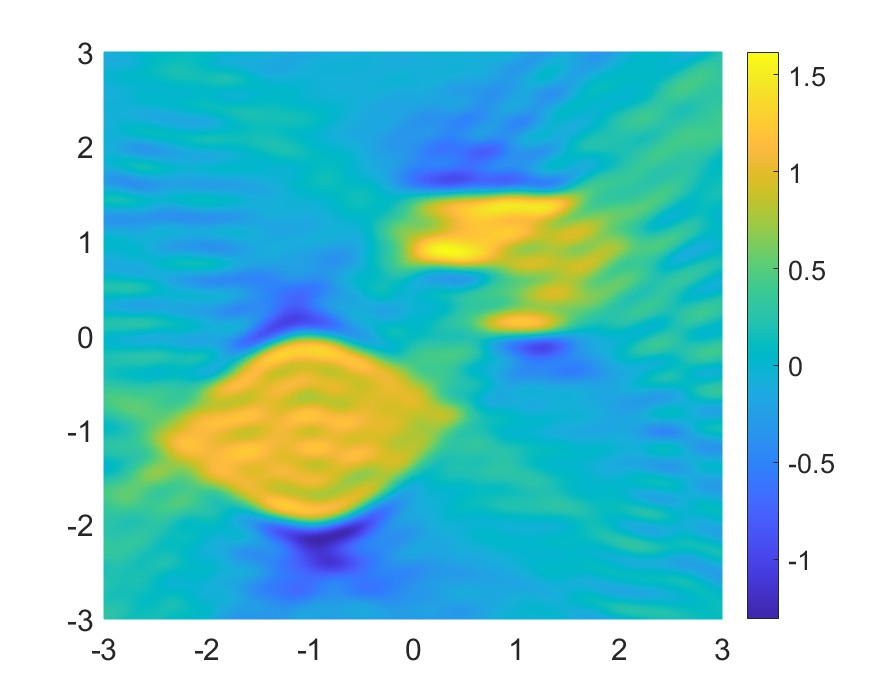}
        }&
         \subfigure[$\gamma=0.3$.]{
            \label{03pi-PI}
            \includegraphics[width=0.3\textwidth]{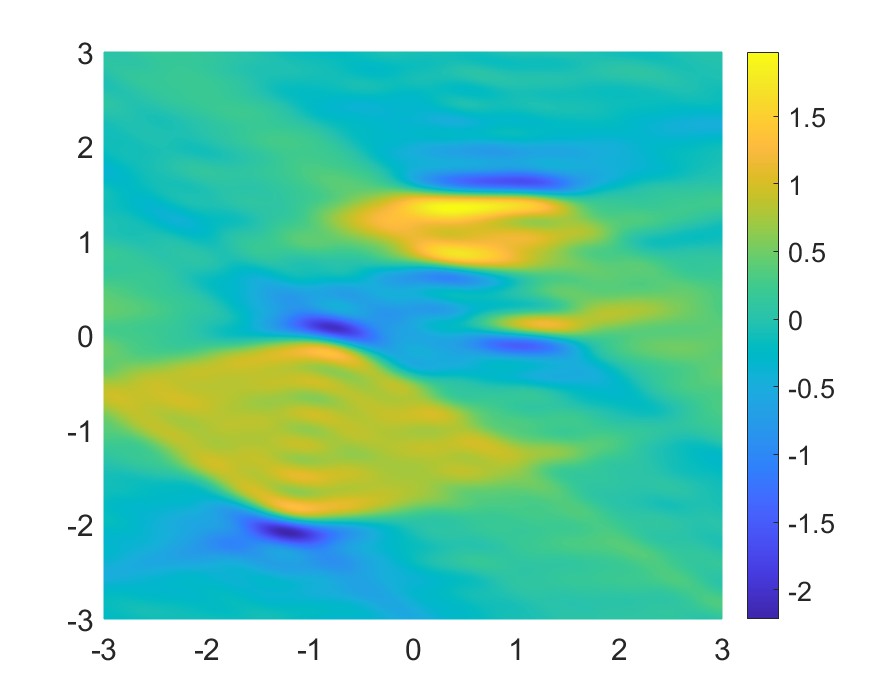}
        }
    \end{tabular}
    \caption{Reconstructions for various $\Theta_L$ (different $\gamma$). Top row: reconstructions by plotting $\mathcal{I}^-$. Bottom row: reconstructions by plotting $\mathcal{I}^+$.}
    \label{limited-aperture}
\end{figure}

\begin{itemize}
    \item \textbf{Examples with different $f$.}
\end{itemize}
Figure \ref{f} shows the reconstructions for three different source functions 
\ben
f_1(y)=e^{-0.05|y|^2}, \quad f_2(y)=1 \quad\mbox{and}\quad f_3(y)=e^{0.05|y|^2},\quad y\in\Omega.
\enn
Obviously, the quality of the reconstruction of $\Om$ by using $\mathcal{I}^-$ depends on the value of the source function $f$. The point with a larger value of $f$ has a better reconstruction.
The second indicator $\mathcal{I}^+$ can not only stably reconstruct the source support $\Om$ but also reflect the value of the source function $f$.
%From $f=e^{-0.05|y|^2}$ to $f=e^{0.05|y|^2}$, the relative magnitude between the values of $f$ on polygons and annuluses changes gradually. 
%Observing these images of indicator function $\mathcal{I}^-$, $f$ has severe influence on the reconstruction of polygons and mild influence on the reconstruction of annulus.  This phenomenon is understandable since function $\mathcal{I}_{\hx}^-$ directly reflects the discontinuity of $I_{\hx}^{\prime}$. Note that the contribution of every annulus to the discontinuity of function $I^{\prime}_{\hx}$ is infinite, however the contribution of each polygon to the discontinuity of function $I^{\prime}_{\hx}$ is finite.
%Observing these images of indicator function $\mathcal{I}^+$, the value of $f$ has minimal influence on the $\mathcal{I}^+$ function's reconstruction of polygons and annulus. In addition, $\mathcal{I}^+$ can indeed reflect the value of source $f$ from the color of reconstruction images.
\begin{figure}[htbp]
    \centering 
    \begin{tabular}{ccc}
        \subfigure[$f_1(y)=e^{-0.25|y|^2},\ y\in\Omega$.]{
            \label{f-neg-I}
            \includegraphics[width=0.3\textwidth]{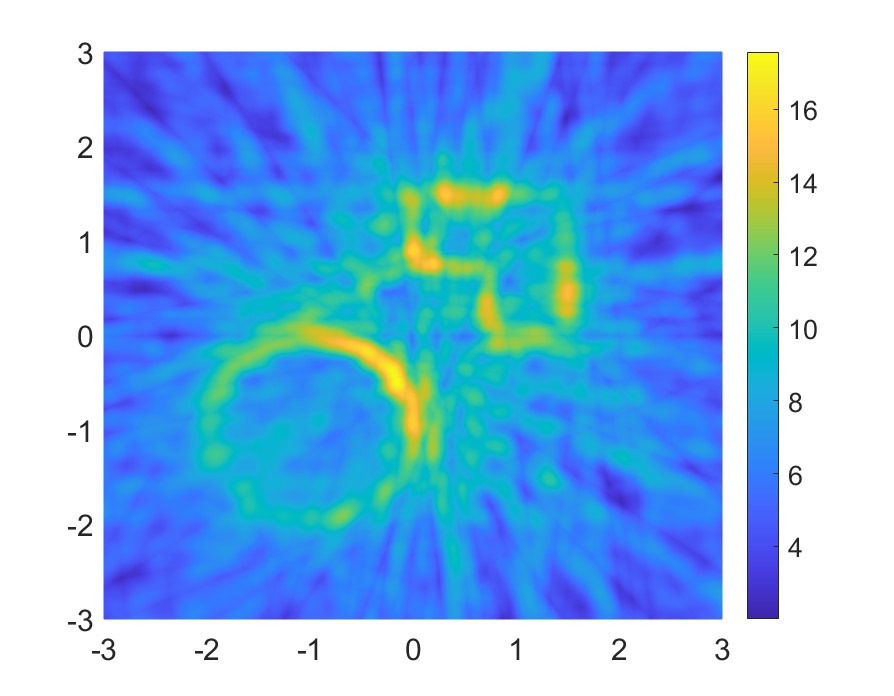}
        }&
        \subfigure[ $f_2(y)=1,\ y\in\Omega$]{
            \label{f-0-I}
            \includegraphics[width=0.3\textwidth]{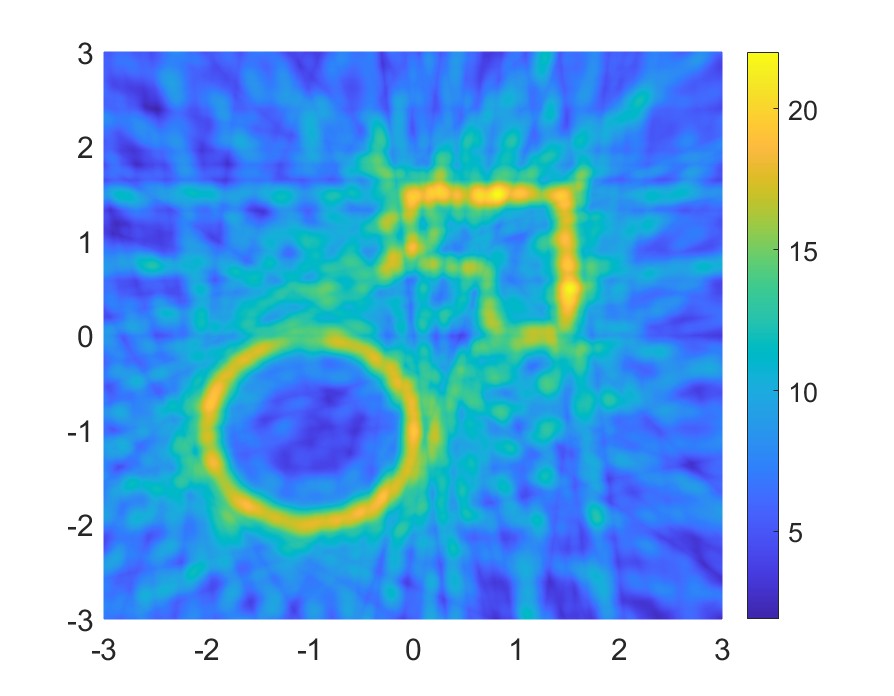}
        }&
         \subfigure[$f_3(y)=e^{0.25|y|^2},\ y\in\Omega$]{
            \label{f-pos-I}
            \includegraphics[width=0.3\textwidth]{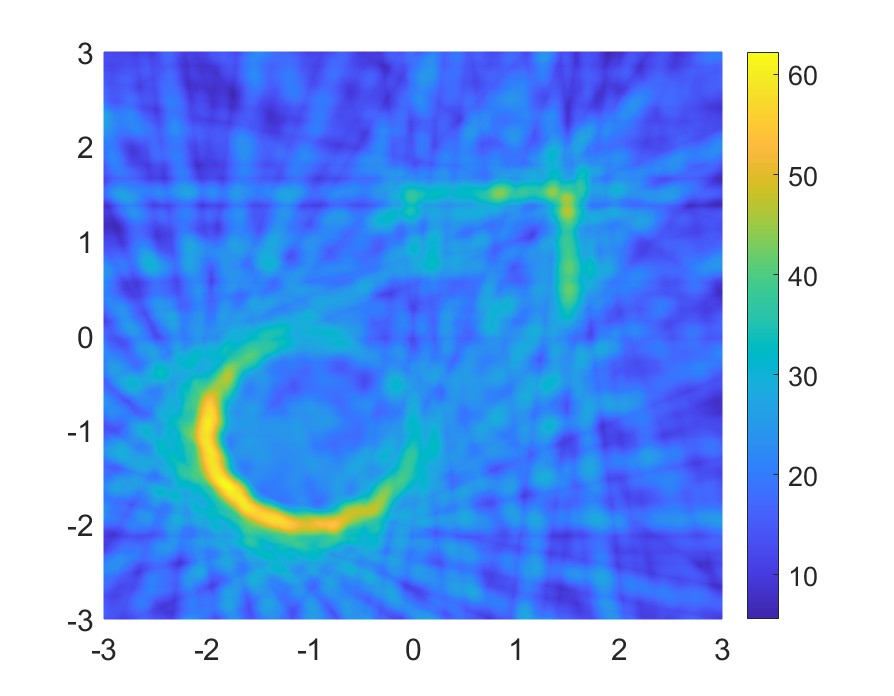}
        }\\
        \subfigure[$f_1(y)=e^{-0.25|y|^2},\ y\in\Omega$.]{
            \label{f-neg-PI}
            \includegraphics[width=0.3\textwidth]{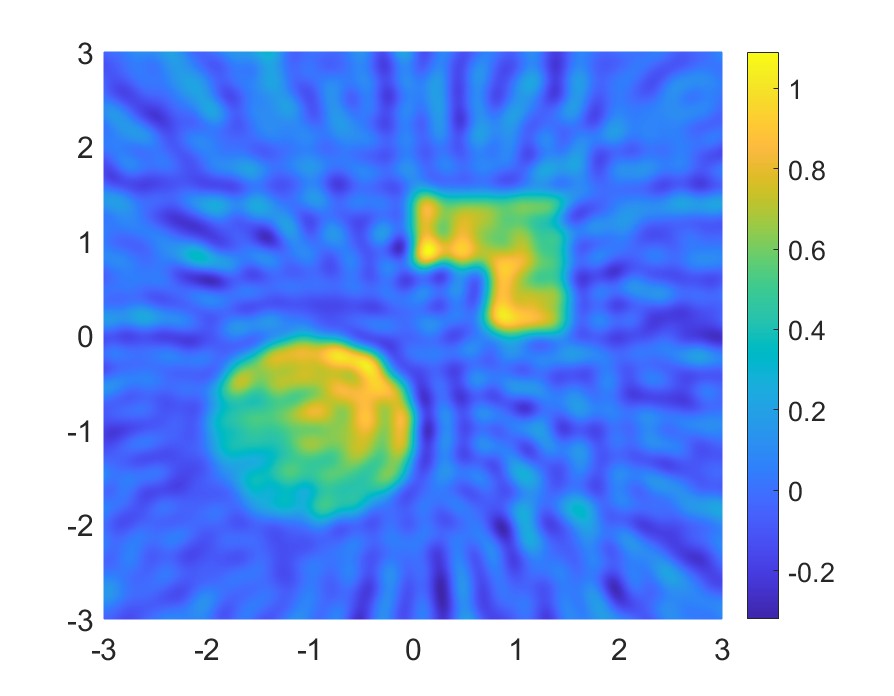}
        }&
        \subfigure[ $f_2(y)=1,\ y\in\Omega$]{
            \label{f-0-PI}
            \includegraphics[width=0.3\textwidth]{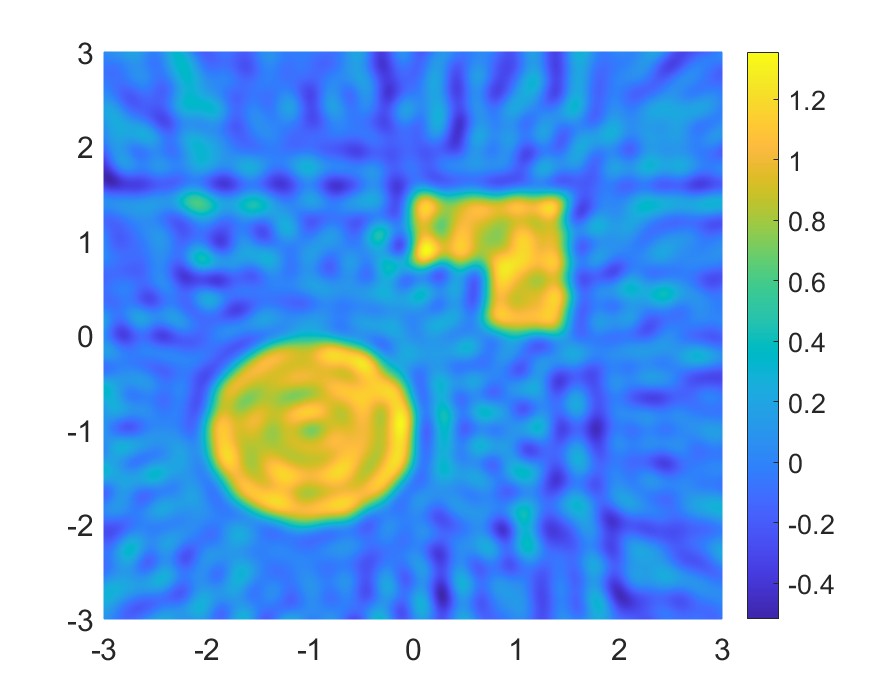}
        }&
         \subfigure[$f_3(y)=e^{0.25|y|^2},\ y\in\Omega$]{
            \label{f-pos-PI}
            \includegraphics[width=0.3\textwidth]{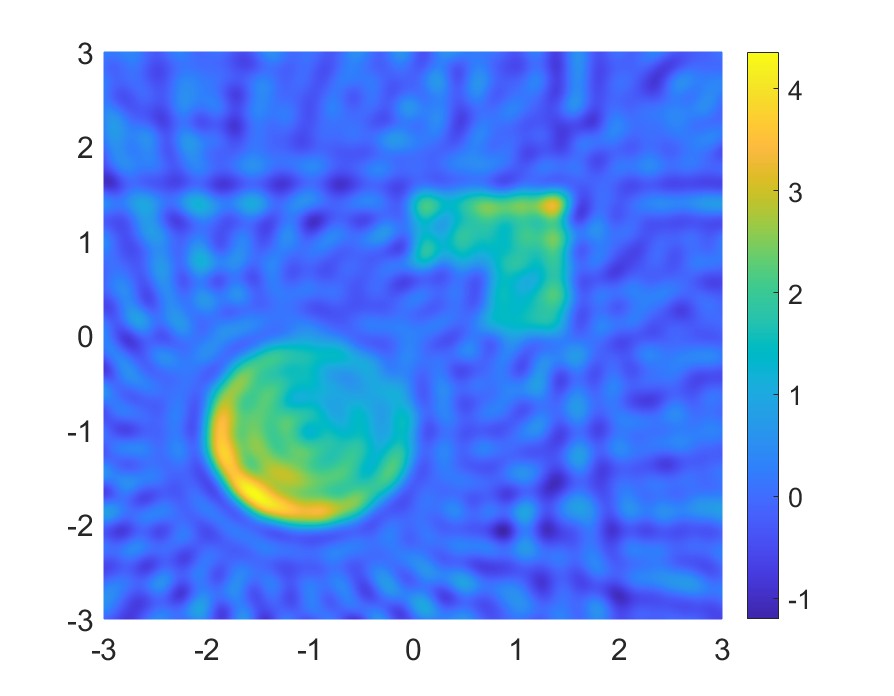}
        }
    \end{tabular}
    \caption{Reconstructions for various $f$. Top row: reconstructions by plotting $\mathcal{I}^-$. Bottom row: reconstructions by plotting $\mathcal{I}^+$.}
    \label{f}
\end{figure}

\begin{itemize}
    \item \textbf{Examples with systematic errors.}
\end{itemize}
In this example, we investigate the influence of systematic errors on our methods. We represent systematic errors by independent random variables with non-zero mean. Precisely, we set
\begin{align*}
     u^{\infty,\delta}_{sys}(\hx_l,k_m)=u^{\infty,\delta}(\hx_l,k_m)+\big[\Tilde{X}_{l,m}+\Tilde{Y}_{l,m}i\big],
 \end{align*}
 where $\Tilde{X}_{l,m},\Tilde{Y}_{l,m}\sim N(\mu,\sigma^2)$ and $N(\mu,\sigma^2)$ is a Gaussian distribution with mean $\mu\neq 0$ and variance $\sigma^2$.

 Figures \ref{mu0.01sig0.05-m} and \ref{mu0.01sig0.05-p} show that the quality of the reconstructions for $\mathcal{I}^{\pm}$ decreases after adding systematic errors with $\mu=0.01$ and $\sigma=0.05$. Note that these $\mu$ and $\sigma$ are no longer small, they are on the same order of magnitude as some far-field data. Unfortunately, our methods failed when $\mu=\sigma=0.1$, systematic errors have indeed caused us trouble, details can be seen in Figures \ref{mu0.1sig0.1-m} and \ref{mu0.1sig0.1-p}. Furthermore, Figures \ref{mo-mu0.1sig0.1-m} and \ref{mo-mu0.1sig0.1-p} illustrate that if we have prior knowledge of $\mu$, the reconstruction can be improved by the following modified indicators $\mathcal{I}^{\pm}_M$,
\begin{align*}
    \mathcal{I}^+_M(z)&:=\frac{1}{4\pi L}\sum\limits_{\hx\in\Theta_L}\int^{k_{2\Lambda}}_{k_{1}}k\left[(u^{\infty,\delta}_{sys}(\hat{x},k)-\mu)e^{ik\hx\cdot z}+(u^{\infty,\delta}_{sys}(-\hat{x},k)-\mu)e^{-ik\hx\cdot z}\right]dk,\quad z\in\mathcal{G},\\
\mathcal{I}^-_M(z)&:=\frac{1}{L}\sum\limits_{\hx\in\Theta_L}\left|\int^{k_{2\Lambda}}_{k_{1}}k\left[(u^{\infty,\delta}_{sys}(\hat{x},k)-\mu)e^{ik\hx\cdot z}-(u^{\infty,\delta}_{sys}(-\hat{x},k)-\mu)e^{-ik\hx\cdot z}\right]dk\right|,\quad z\in\mathcal{G}.
\end{align*}
It is easy to derive that 
\begin{align*}
    \mathbb E[\mathcal{I}^+(z)-\mathcal{I}^+_M(z)]=0\quad {\rm and }\quad\mathbb E[\mathcal{I}^-(z)-\mathcal{I}^-_M(z)]=C(\sigma),\ \forall\ z\in\mathcal{G},
\end{align*}
here $C(\sigma)$ is a constant depends on $\sigma$. This explains why the modified indicators $\mathcal{I}^{\pm}_M$ work.

\begin{figure}[htbp]
\centering
    \begin{tabular}{ccc}
        \subfigure[Result by plotting $\mathcal{I}^-$.]{
            \label{mu0.01sig0.05-m}
            \includegraphics[width=0.3\textwidth]{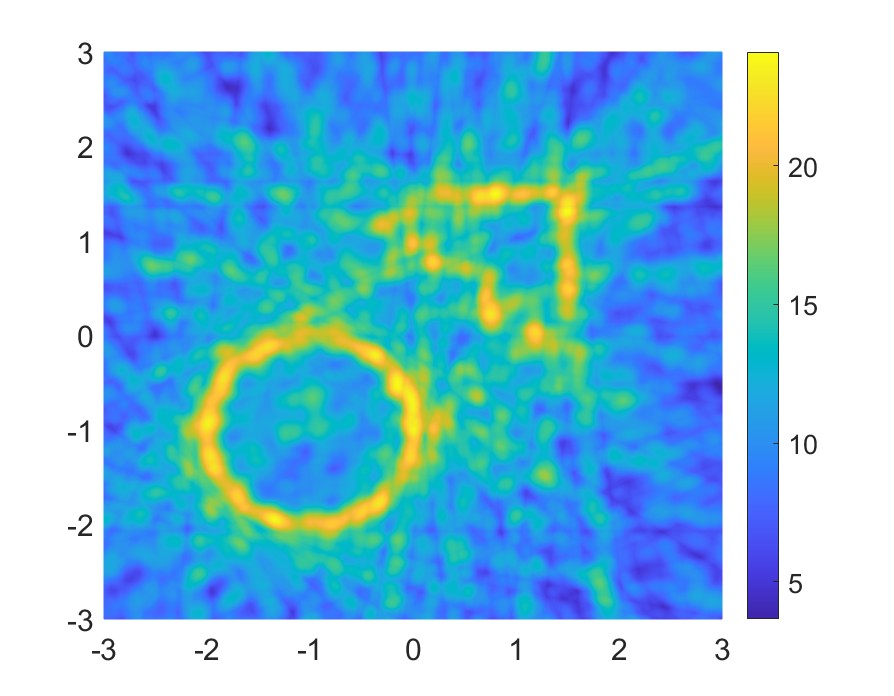}
        }&
        \subfigure[Result by plotting $\mathcal{I}^-$.]{
            \label{mu0.1sig0.1-m}
            \includegraphics[width=0.3\textwidth]{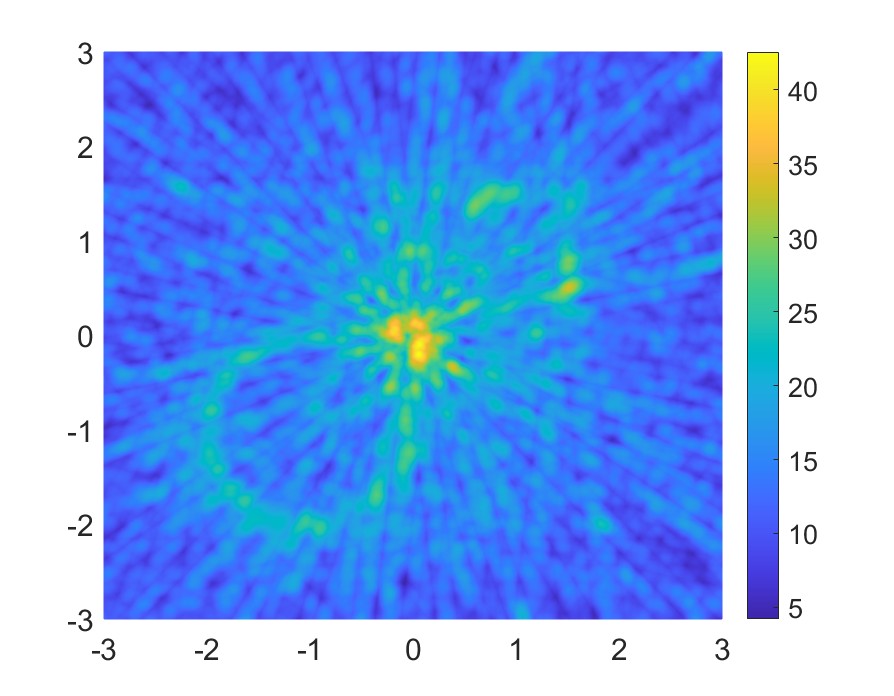}
        }&
         \subfigure[Result by plotting $\mathcal{I}^-_M$.]{
            \label{mo-mu0.1sig0.1-m}
            \includegraphics[width=0.3\textwidth]{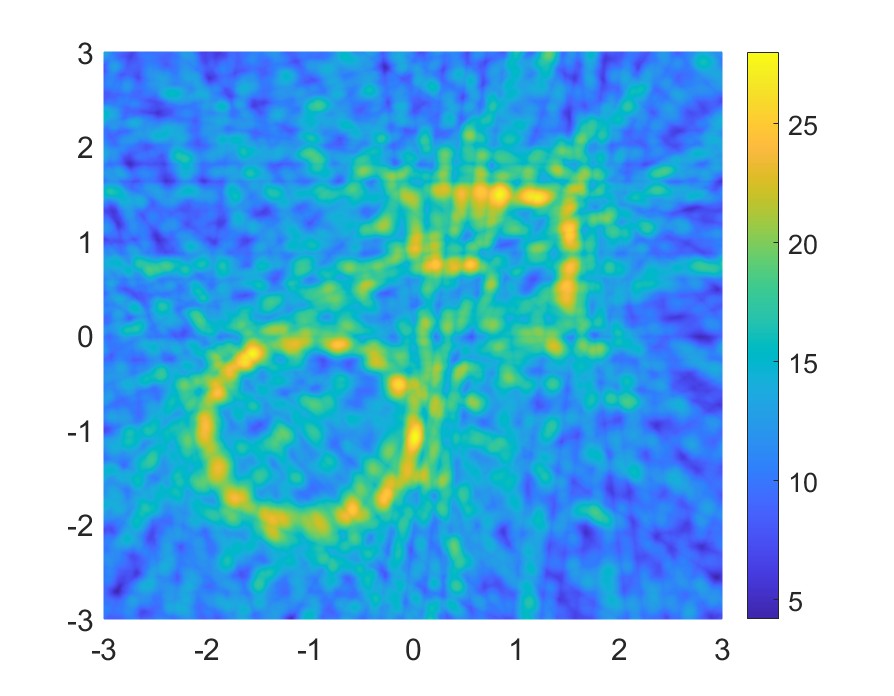}
        }\\
          \subfigure[Result by plotting $\mathcal{I}^+$.]{
            \label{mu0.01sig0.05-p}
            \includegraphics[width=0.3\textwidth]{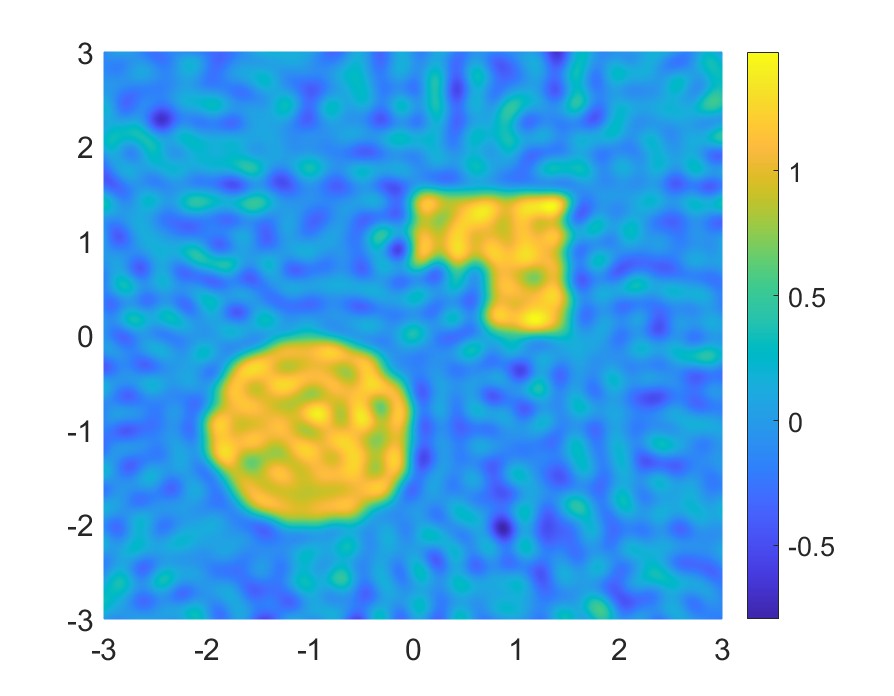}
        }&
        \subfigure[Result by plotting $\mathcal{I}^+$.]{
            \label{mu0.1sig0.1-p}
            \includegraphics[width=0.3\textwidth]{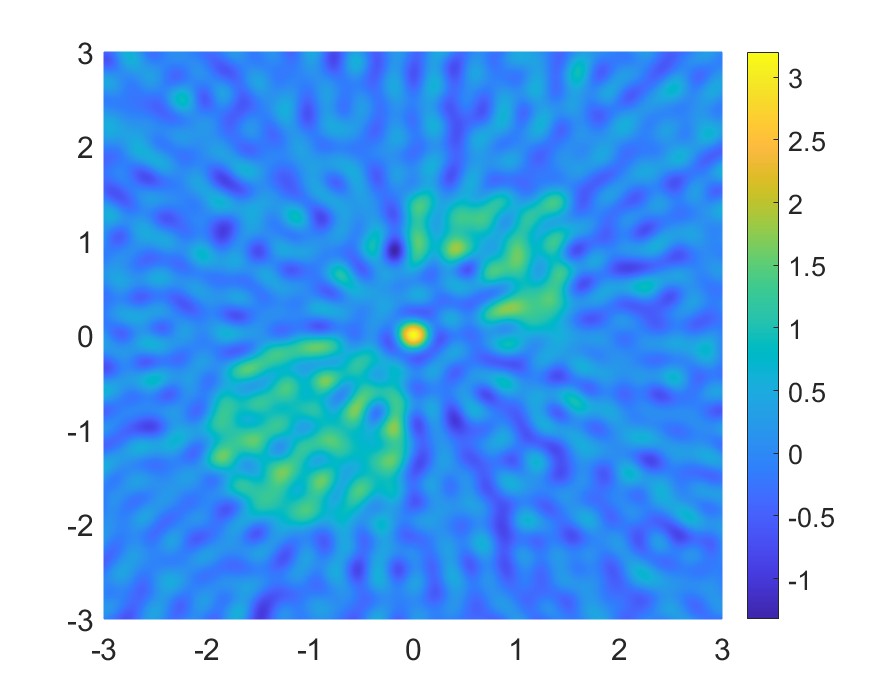}
        }&
         \subfigure[Result by plotting $\mathcal{I}^+_M$.]{
            \label{mo-mu0.1sig0.1-p}
            \includegraphics[width=0.3\textwidth]{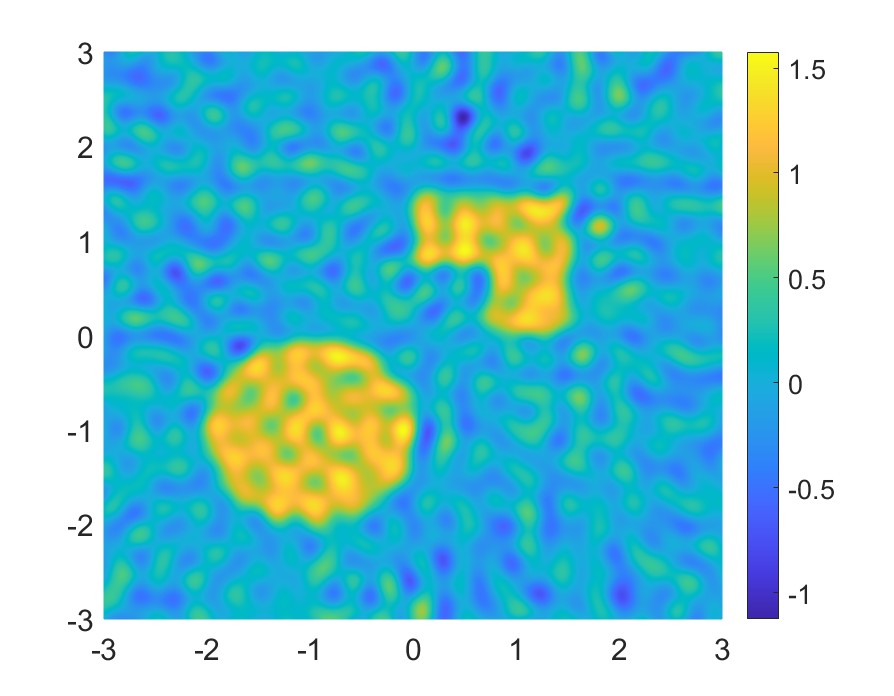}
        }
    \end{tabular}
    \caption{Reconstructions for various systematic errors. Left: $\mu=0.01,\ \sigma=0.05$. Middle : $\mu=0.1,\ \sigma=0.1$. Right: $\mu=0.1,\ \sigma=0.1$.}
    \label{sys-data}
\end{figure}

\subsection{Numerical experiments with much more complex structures}
Finally, we consider three much more complex examples to verify the effectiveness and robustness of the proposed indicator functions. Here, $\Om$ may have many connected components and many corners. We set $f(z)=|z|+5$ for $z\in\Om$.

Figure \ref{Complex_L-31_lambda-30} shows that both the indicator functions $\mathcal{I}^{\pm}$ with small $L=31$ and $\Lambda=30$ work well for these complex examples. We emphasize again that $L=31$ is much less than needed in the unique proof.
In particular, as has been shown in the previous examples, the indicator function $\mathcal{I}^+$ presents its strong power to determine the source function $f$.

%We want to remark that one may realize that the contribution of a cusp to the discontinuity of function $I^{\prime}_{\hx}$ maybe vanish for some $\hx$. As seen in figure \ref{Tai-Chi_L-31_lambda-30}, there exist a cusp on boundary $\partial\Omega$. As far as we known, the cusp is exceedingly challenging to deal with, our indicator functions $\mathcal{I}^{\pm}$ can only vaguely reconstruct the cusp, however, both two functions have already indicated the presence of the cusp.

\begin{figure}[htbp]
    \centering
\begin{tabular}{ccc}
        \subfigure{
            \label{FOfComplex-L_31-Lambda_30}
            \includegraphics[width=0.3\textwidth]{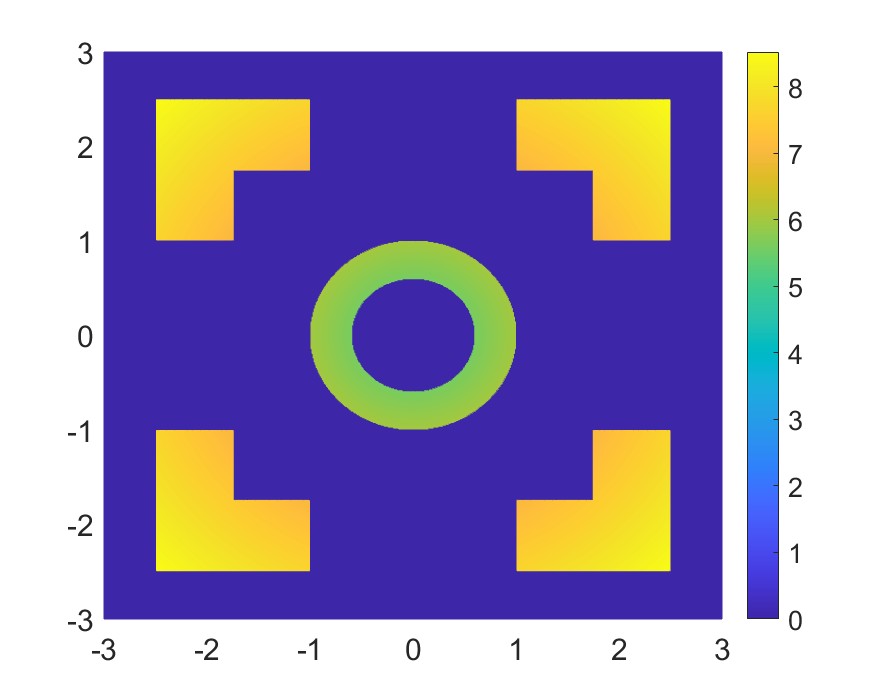}
        }&
        \subfigure{
            \label{MOfComplex-L_31-Lambda_30}
            \includegraphics[width=0.3\textwidth]{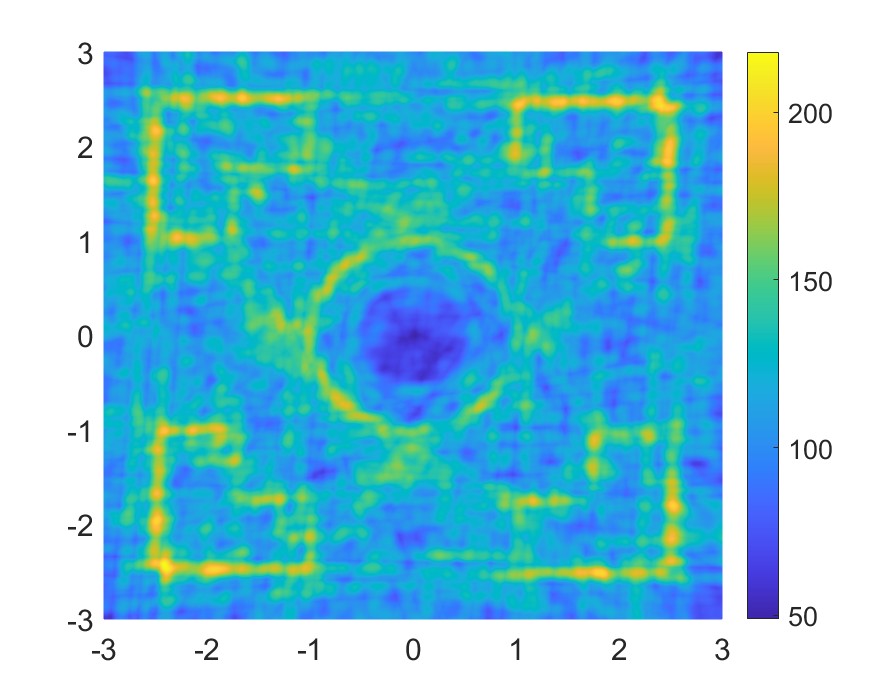}
        }&
         \subfigure{
            \label{POfComplex-L_31-Lambda_30}
            \includegraphics[width=0.3\textwidth]{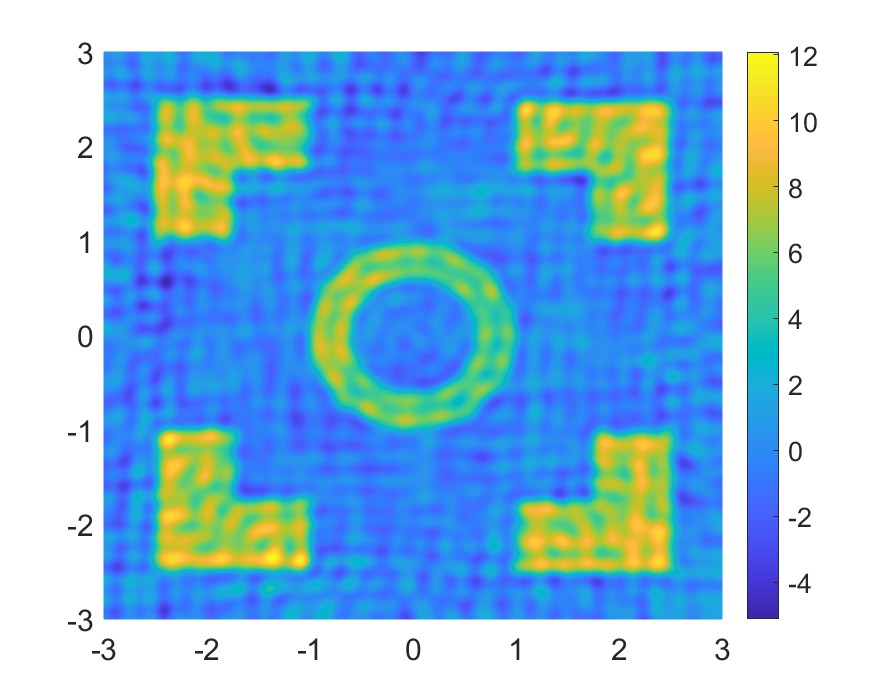}
        }\\
          \subfigure{
            \label{FOfTai-Chi-L_31-Lambda_30}
            \includegraphics[width=0.3\textwidth]{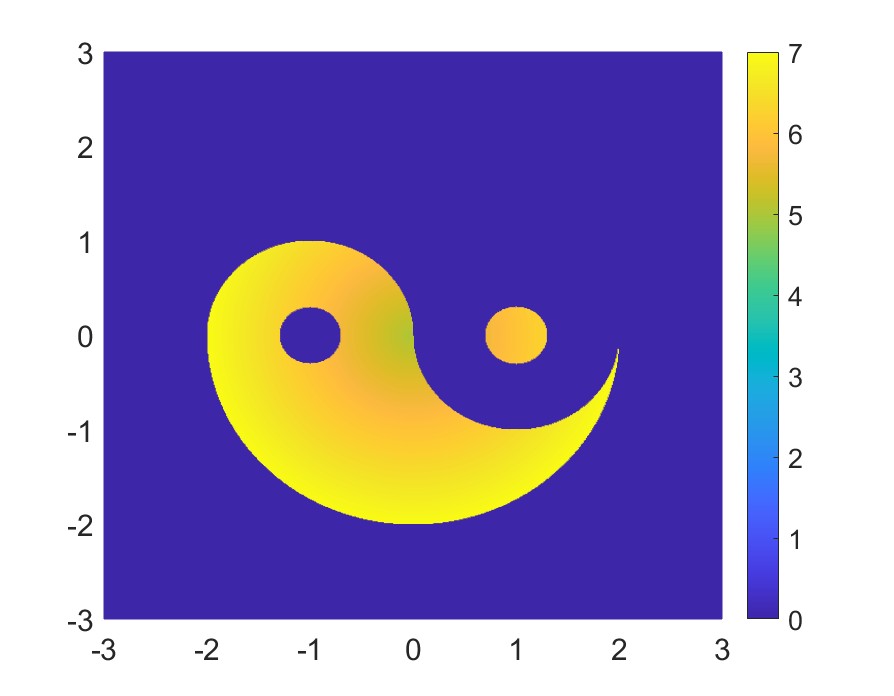}
        }&
        \subfigure{
            \label{MOfTai-Chi-L_31-Lambda_30}
            \includegraphics[width=0.3\textwidth]{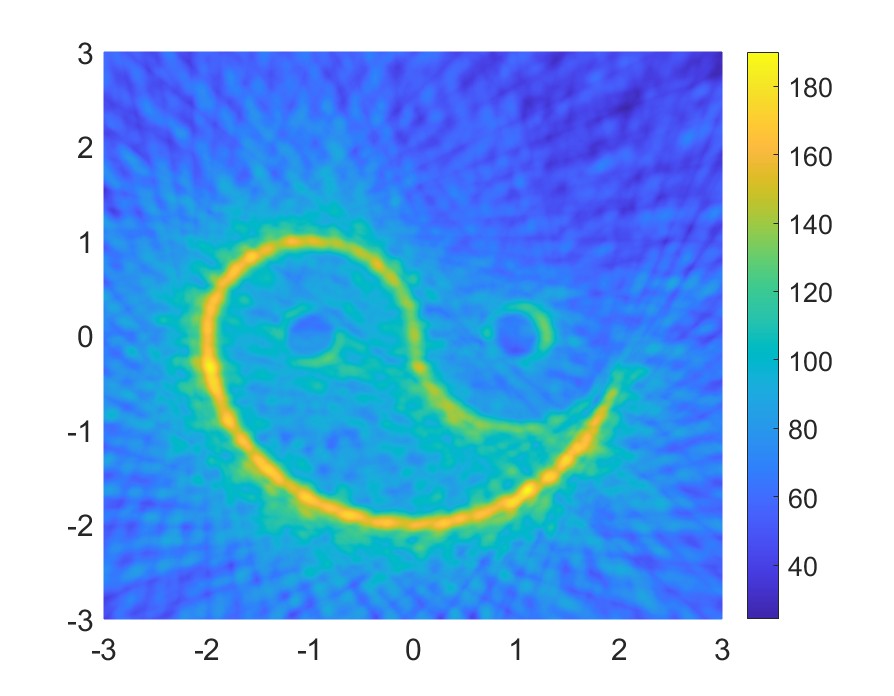}
        }&
         \subfigure{
            \label{POfTai-Chi-L_31-Lambda_30}
            \includegraphics[width=0.3\textwidth]{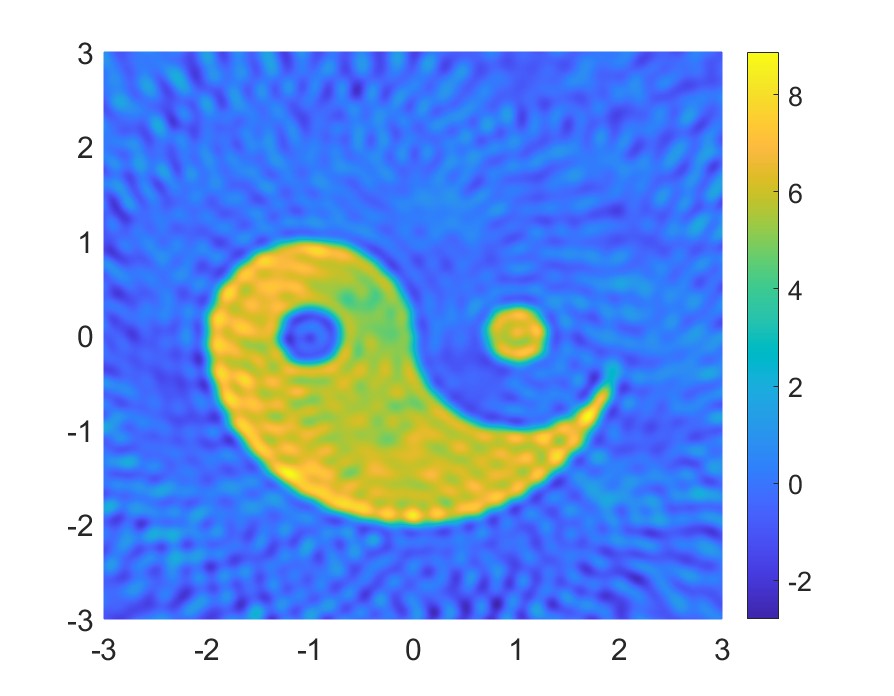}
        }\\
             \subfigure{
            \label{FOfISP-L_31-Lambda_30}
            \includegraphics[width=0.3\textwidth]{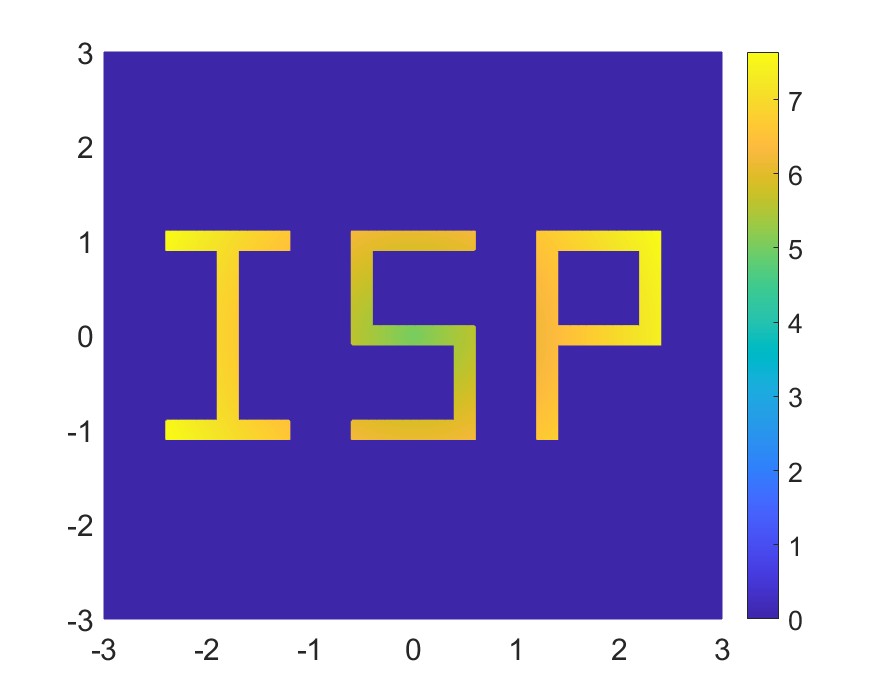}
        }&
        \subfigure{
            \label{MOfISP-L_31-Lambda_30}
            \includegraphics[width=0.3\textwidth]{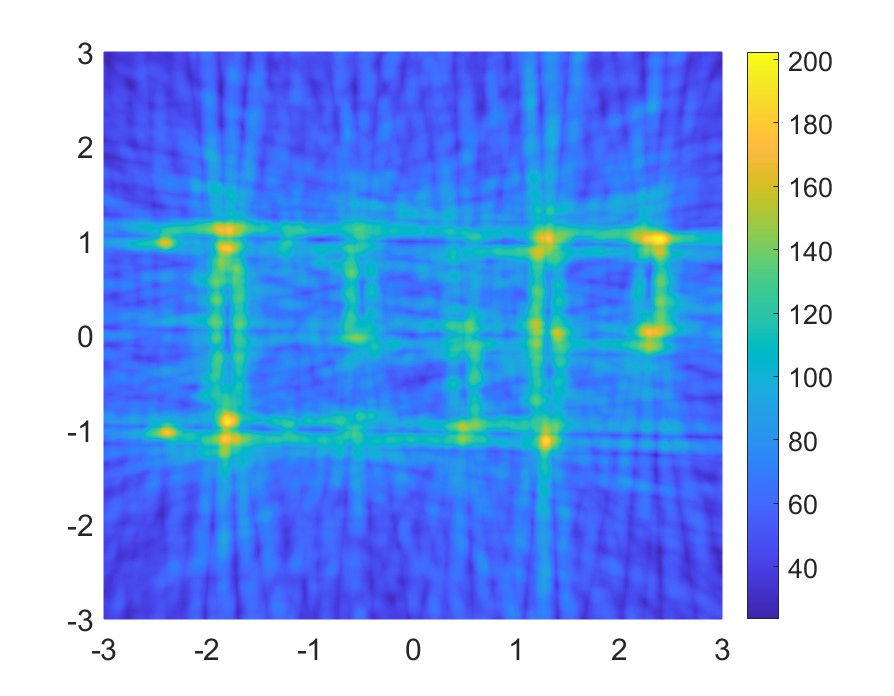}
        }&
         \subfigure{
            \label{PISP-L_31-Lambda_30}
            \includegraphics[width=0.3\textwidth]{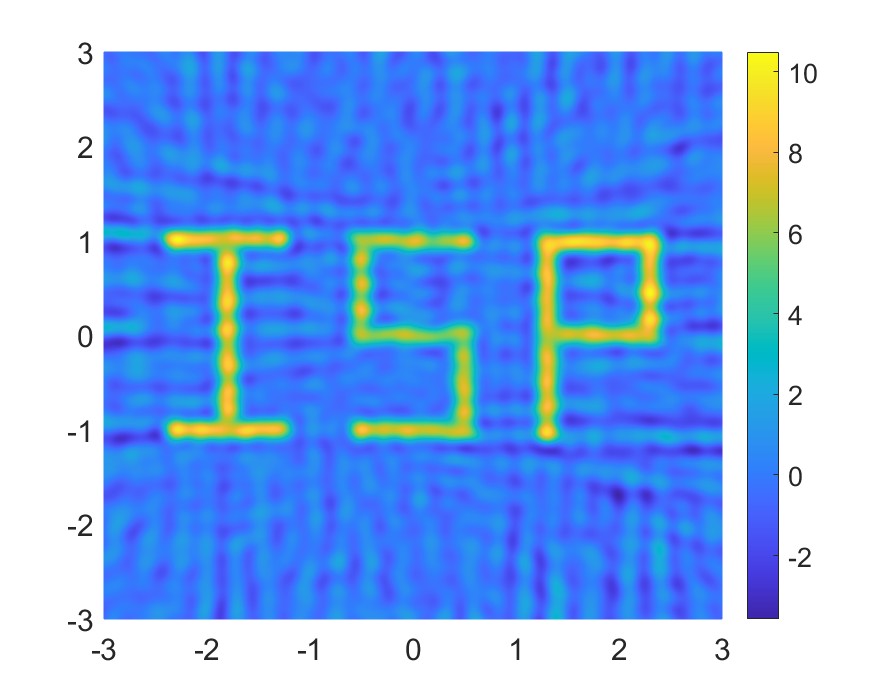}
        }
    \end{tabular}
    \caption{\em Reconstructions of three much more complex structured sources with $L=31,\Lambda=30$. Left: the true source functions. Middle: reconstructions by plotting $\mathcal{I}^-$. Right:  reconstructions by plotting $\mathcal{I}^+$.}
    \label{Complex_L-31_lambda-30}
\end{figure}

To further present the effectiveness of the indicator function $\mathcal{I}^+$  for determining the source $f$, we test the difference of $f(z)$ and $\mathcal{I}^+(z)$ by the following indicator
\be\label{Ieps}
I_{\epsilon}(z):=\left\{
    \begin{array}{ll}
        1, &  {\rm if}\, |f(z)-\mathcal{I}^+(z)|>\epsilon,\\
        0, & {\rm otherwise}.
    \end{array}
    \right.
\en
Considering the $30\%$ relative noise in the far field patterns and the fact $f(z)>5,\ \forall\ z\in\Omega$, we set the threshold $\epsilon=5\times0.3=1.5$. 
Figure \ref{indicator-error} shows that the difference between $f(z)$ and $\mathcal{I}^+(z)$ for $z\in\R^2\ba\pa\Om$ indeed decreases with the increase of $L$. 
Note that the value of the threshold $\epsilon$ is not important. We have tested the other thresholds and founded similar results.
%Finally, figure \ref{indicator-error} verifies that the indicator functional $\mathcal{I}^+$ can be used to determine the source function $f(z)$ for $z\in\mathbb R^2\backslash\partial\Omega$ if there are sufficiently many observation directions. For above three complex examples, we display the location where $z\in\mathcal{G}$ and $|f(z)-\mathcal{I}^+(z)|>e$. Taking into account $\delta=0.3$ as well as $f(z)>5,\ \forall\ z\in\Omega$, we just set the threshold $e=5\times0.3=1.5$, the value of the threshold $e$ is not important.
\begin{figure}[htbp]
\centering
    \begin{tabular}{ccc}
        \subfigure[$L=31$.]{
            \label{ErrOfComplex-L_31-Lambda_30}
            \includegraphics[width=0.3\textwidth]{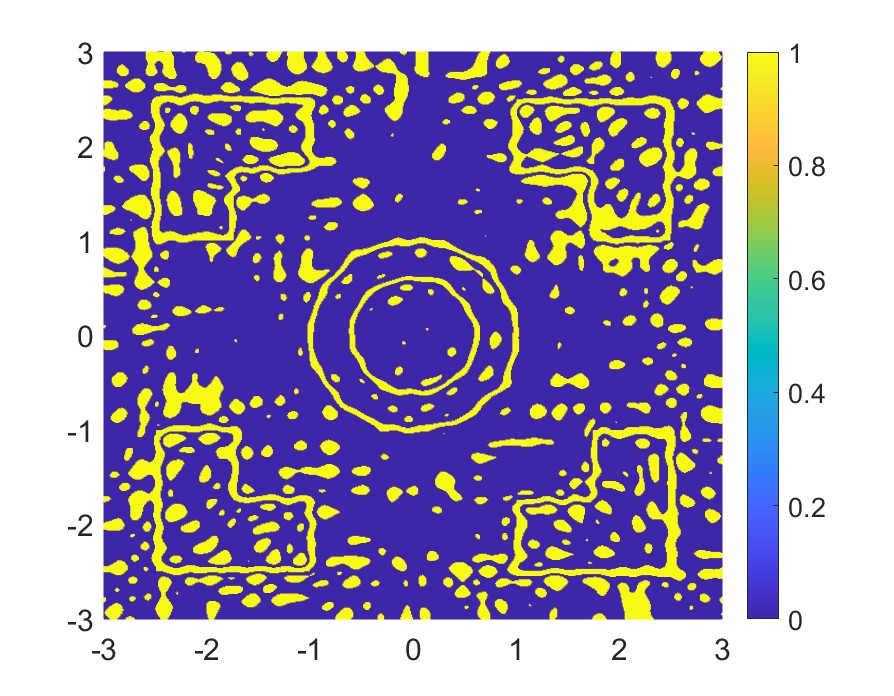}
        }&
        \subfigure[$L=41$.]{
            \label{ErrOfComplex-L_51-Lambda_30}
            \includegraphics[width=0.3\textwidth]{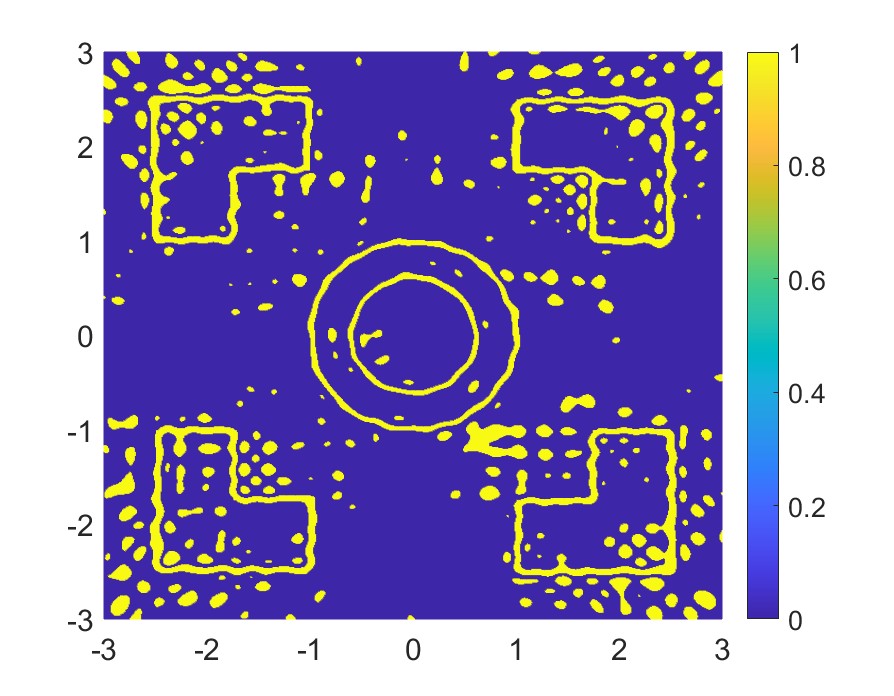}
        }&
         \subfigure[$L=51$.]{
            \label{ErrOfComplex-L_71-Lambda_30}
            \includegraphics[width=0.3\textwidth]{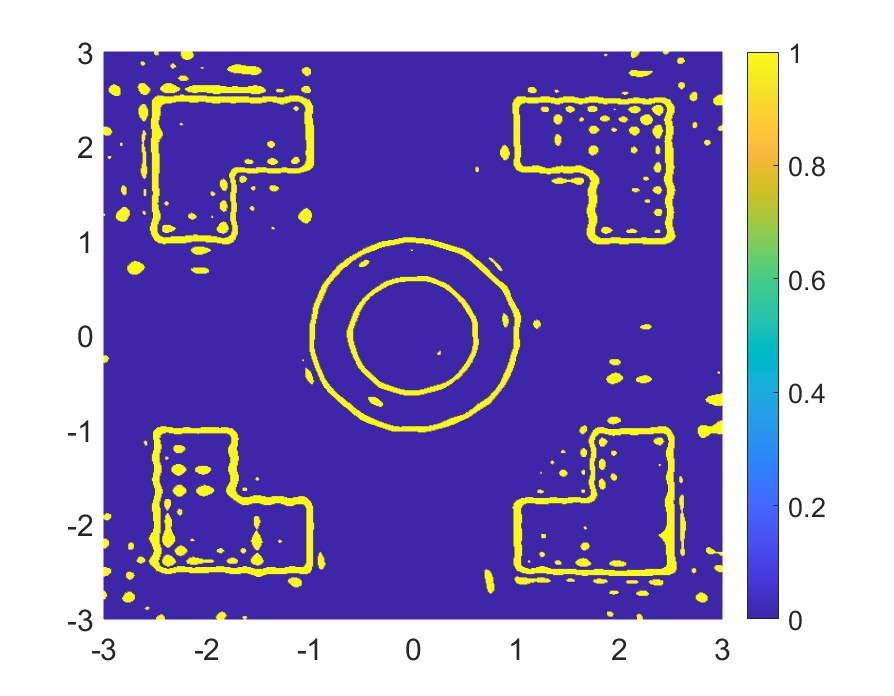}
        }\\
          \subfigure[$L=31$.]{
            \label{ErrOfTai-Chi-L_31-Lambda_30}
            \includegraphics[width=0.3\textwidth]{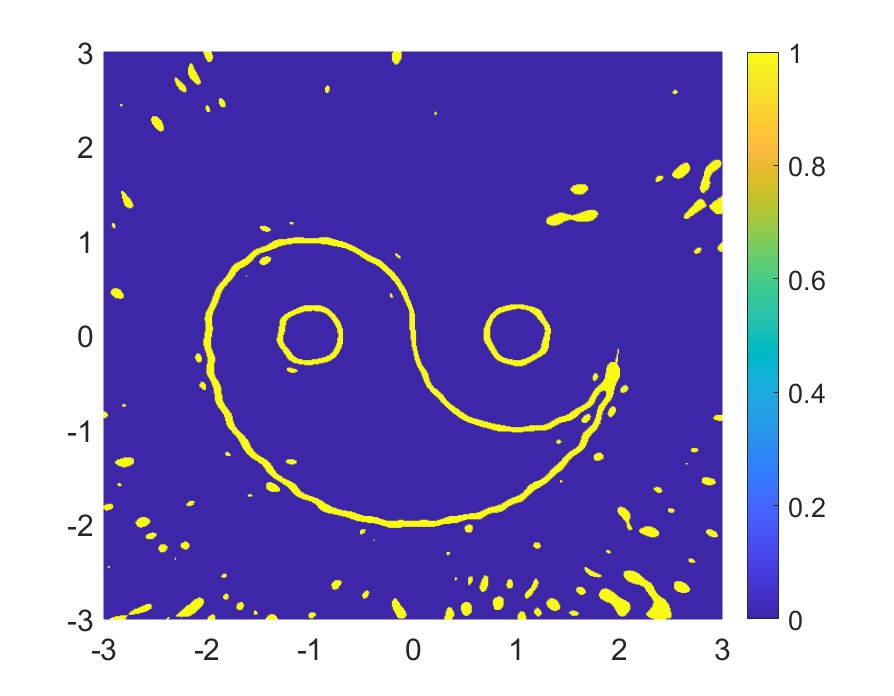}
        }&
        \subfigure[$L=41$.]{
            \label{ErrOfTai-Chi-L_41-Lambda_30}
            \includegraphics[width=0.3\textwidth]{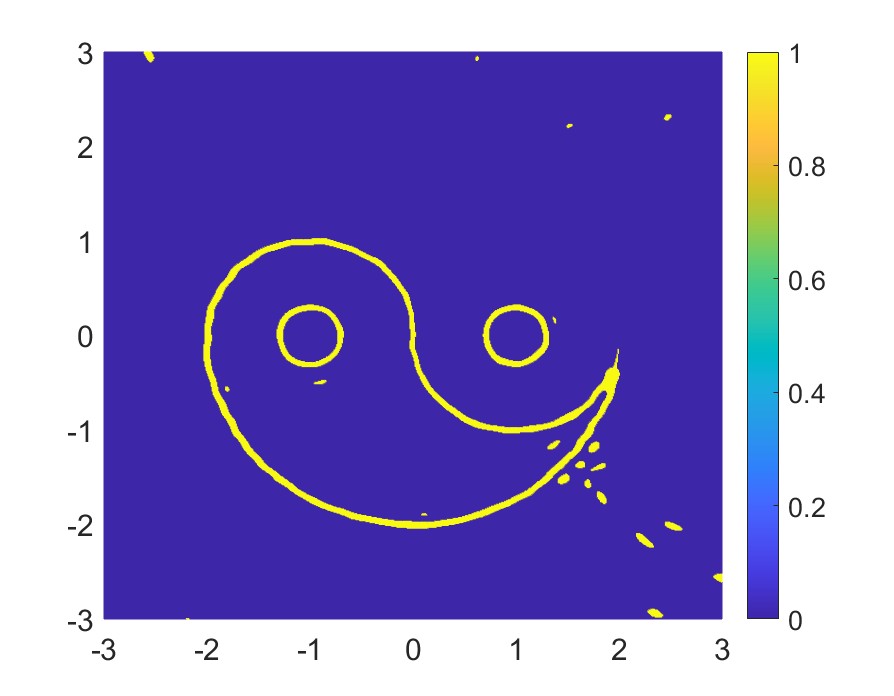}
        }&
         \subfigure[$L=51$.]{
            \label{ErrOfTai-Chi-L_51-Lambda_30}
            \includegraphics[width=0.3\textwidth]{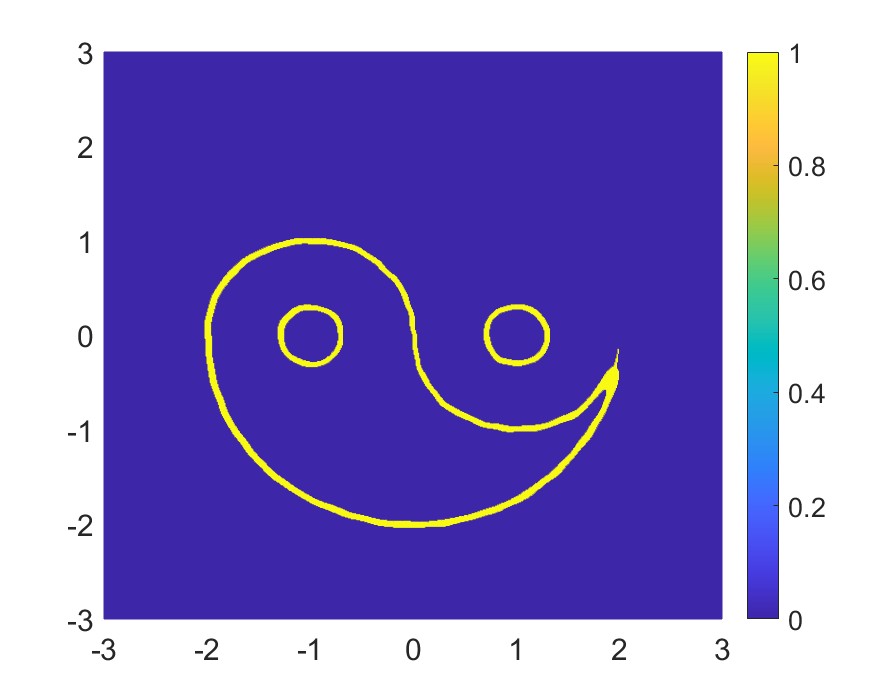}
        }\\
             \subfigure[$L=31$.]{
            \label{ErrOfISP-L_31-Lambda_30}
            \includegraphics[width=0.3\textwidth]{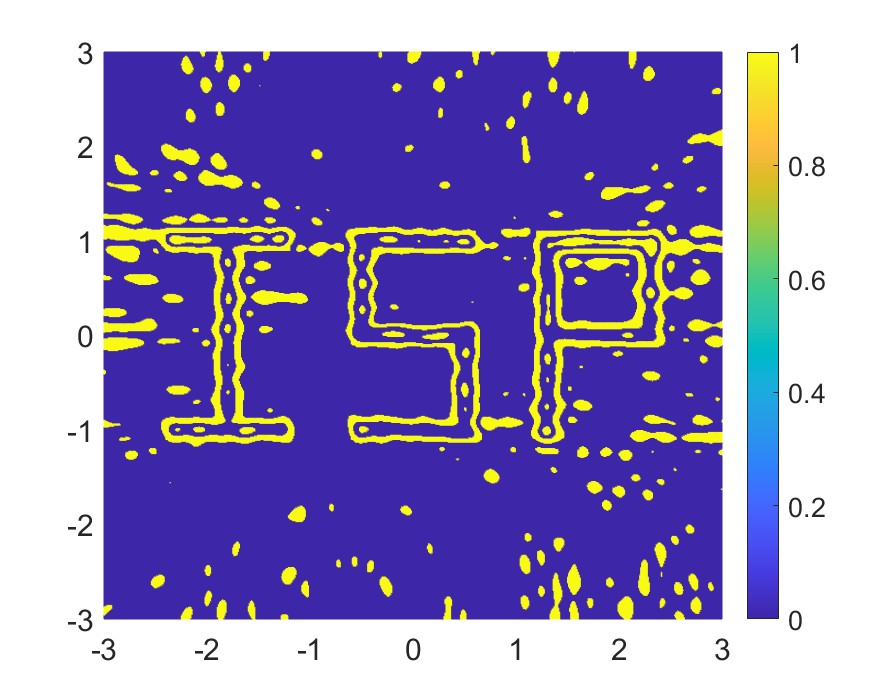}
        }&
        \subfigure[$L=41$.]{
            \label{ErrOfISP-L_41-Lambda_30}
            \includegraphics[width=0.3\textwidth]{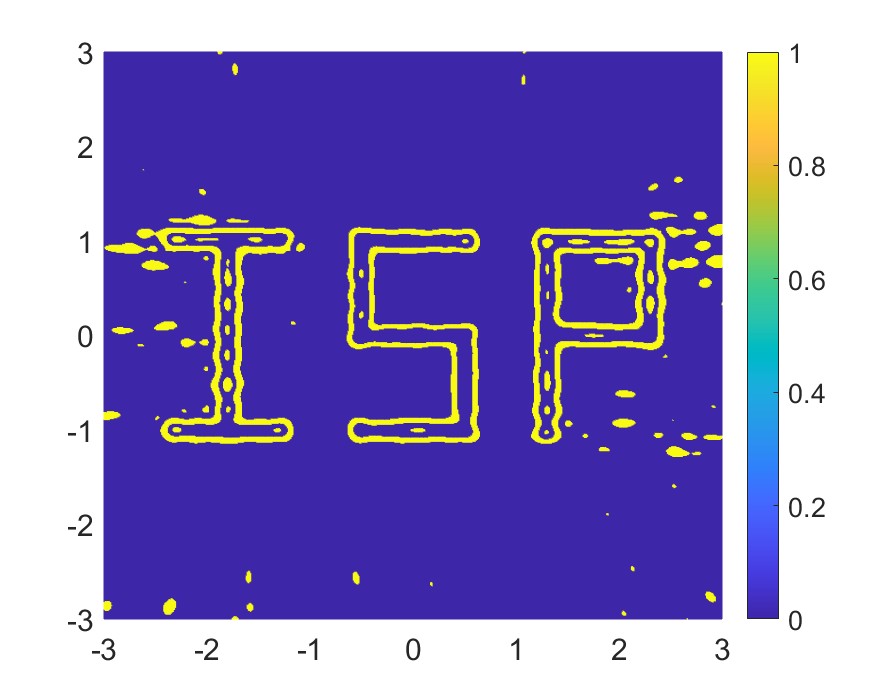}
        }&
         \subfigure[$L=51$.]{
            \label{ErrOfISP-L_51-Lambda_30}
            \includegraphics[width=0.3\textwidth]{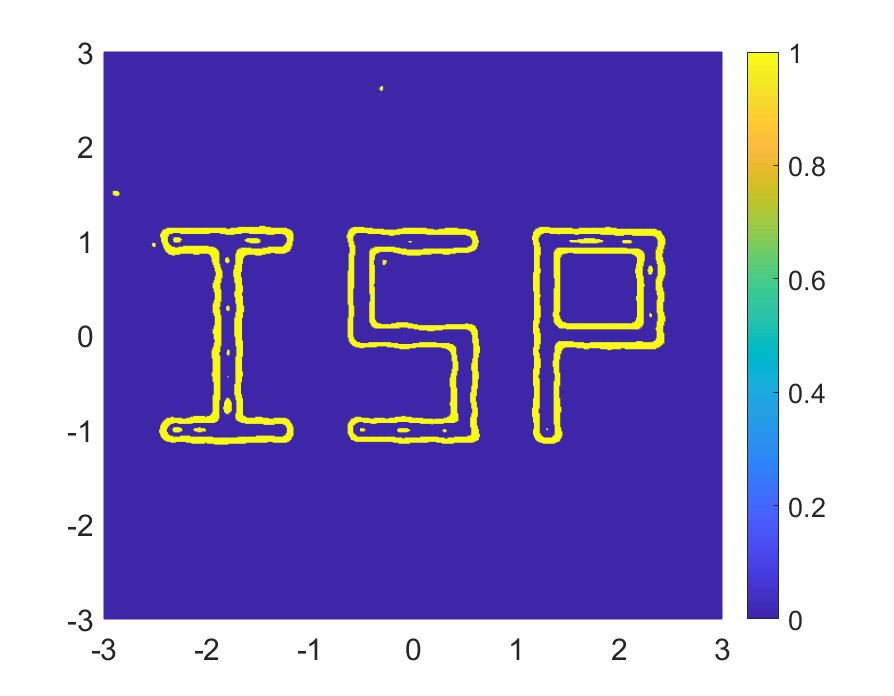}
        }
    \end{tabular}
    \caption{In this example we use the same sources as in Figure \ref{Complex_L-31_lambda-30} but increase the number $L$. We plot the function $I_\epsilon$ defined in \eqref{Ieps}.}
    \label{indicator-error}
\end{figure}
 
\section{Conclusions}
In this work,  we introduce some uniqueness results and two new direct sampling methods for inverse source problem with multi-frequency sparse far field patterns.

For the uniqueness results, we consider the source $f$ that satisfies certain conditions whose support $\Omega$ is composed of some annuluses and some polygons. We first show that the Radon transform $Rf$ of source can be obtained by data. Then, with the help of the relationship between the singularities
in source $f$ and singularities in $Rf$, we prove
that the $\Omega$ as well as the value of $f$ on $\Omega$'s corners can be uniquely determined by 
multi-frequency sparse far field patterns.

For two direct sampling methods, we summarize the main features of the proposed direct sampling methods as follows:
\begin{itemize}
  \item We have used the multi-frequency far field patterns at sparse observation directions; This is important because, in many cases of practical applications, the measurements are only available at finitely many sensors.
  \item The proposed direct sampling methods inherit many advantages of the sampling methods, e.g., very simple and fast to implement, highly robust to noise and making no use of the topological properties of the unknown objects. Besides, the novel sampling methods are able to produce high resolution reconstructions of very complex source supports.
  \item The first indicator function is designed for looking for the boundary of the source support. Such an indicator function is motivated by the uniqueness arguments for polygons and annuluses. To the author's knowledge, this is the first uniqueness result for the source support from the multi-frequency measurements at sparse sensors. 
  \item Generally speaking, the sampling type methods are qualitative methods in the sense that one can only reconstruct the support but not the parameters of the unknown objects. The second indicator function is a modification of the first one. However, we show that it can not only reconstruct the source support but also determine the source function.
\end{itemize}

In conclusion, this work contributes to the field of inverse source problems by providing  the first uniqueness result for the source support $\Omega$ from multi-frequency measurements at sparse sensors and two effective direct sampling methods. Future research will also explore more complex scenarios involving inverse obstacle and medium scattering problems.

\section*{Acknowledgement}
The research of X. Liu is supported by the National Key R\&D Program of China under grant 2024YFA1012303 and the NNSF of China under grant 12371430. The authors thank the referees for their invaluable comments and suggestions which helped improve the paper.

\bibliographystyle{SIAM}

\end{document}